\documentclass[reqno, a4paper]{amsart}
\usepackage[T1]{fontenc}
\usepackage[utf8]{inputenc}
\usepackage{amsmath,amsfonts,amssymb,graphics,amsthm, comment}
\usepackage{textcomp}
\usepackage{upgreek,mathtools,bm}
\usepackage{newtxtext}
\usepackage[varvw]{newtxmath}
\usepackage[colorlinks = true, linkcolor = black, urlcolor  = black, citecolor = black]{hyperref}

\usepackage{cleveref}

\usepackage{enumitem}
\usepackage{xcolor}
\usepackage{tikz-cd}

\DeclareMathOperator{\id}{id}
\DeclareMathOperator{\tr}{tr}

\DeclareMathOperator{\ri}{ri}

\newcommand{\msb}{m^{\text{SB}}}
\newcommand{\psb}{\pi^{\text{SB}}}
\newcommand{\Xsb}{X^{\text{SB}}}
\newcommand{\F}{\mathcal F}

\usepackage[a4paper, left=3cm, right=3cm]{geometry}

\theoremstyle{plain}
\newtheorem{theorem}{Theorem}

\newtheorem{lemma}[theorem]{Lemma}
\newtheorem{proposition}[theorem]{Proposition}

\theoremstyle{definition}
\newtheorem{definition}[theorem]{Definition}
\newtheorem{assumption}[theorem]{Assumption}

\theoremstyle{remark}
\newtheorem{remark}[theorem]{Remark}

\numberwithin{equation}{section}
\numberwithin{theorem}{section}

\newcommand{\R}{\mathbb{R}}
\newcommand{\Rd}{\mathbb{R}^{d}}

\newcommand{\conv}{\operatorname{conv}}
\newcommand{\supp}{\operatorname{supp}}

\newcommand{\aff}{\textnormal{aff}}

\newcommand{\E}{\mathbb{E}}
\newcommand{\MT}{\mathsf{MT}}
\newcommand{\Cpl}{\mathsf{Cpl}}

\newcommand{\D}{\mathcal{D}}

\newcommand{\bary}{\operatorname{bary}}
\newcommand{\lc}{\preceq_{\textnormal{c}}}
\newcommand{\MCov}{\textnormal{MCov}}

\newcommand{\Law}{\operatorname{Law}}
\newcommand{\dom}{\operatorname{dom}}
\renewcommand{\P}{\mathcal{P}}
\newcommand{\PP}{\mathcal{P}}

\DeclareMathOperator{\Argmin}{Argmin}
\DeclareMathOperator{\diag}{diag}
\DeclareMathOperator{\Cov}{Cov}

\begin{document}

	\title{Bridging classical and martingale Schrödinger bridges}
	
	\author{Julio Backhoff, Mathias Beiglböck, Giorgia Bifronte, Armand Ley.}
	
	\begin{abstract} 
		
		We investigate the martingale Schrödinger bridge, recently introduced by Nutz and Wiesel as a distinguished martingale transport plan between two probability measures in convex order. We show that this construction extends naturally to arbitrary dimension and admits several equivalent characterizations. In particular, we identify its continuous-time counterpart as the continuous martingale with prescribed marginals that minimizes a weighted quadratic energy measuring the deviation  from  Brownian motion. In the irreducible case, we prove that this continuous martingale Schrödinger bridge coincides with the Föllmer martingale, that is, with the Doob martingale associated to a suitable Föllmer process. More generally, we relate the martingale Schrödinger bridge to a variational problem over base measures and to the dual formulation of the corresponding weak optimal transport problem, thereby clarifying its connection with the classical Schrödinger bridge.
		
		\bigskip
		
		\noindent\emph{Keywords:} entropic transport, martingale transport, Schrödinger problem, Schrödinger bridge, Föllmer process, Gibbs density, filtering 
		
		\smallskip
		
		\noindent\emph{Mathematics Subject Classification (2010):} Primary 60G42, 60G44; Secondary 91G20.
	\end{abstract}
	
	\maketitle

	\section{Introduction}
	
	We consider martingale transport between probability measures \(\mu,\nu\in\PP_2(\R^d)\).
	By Strassen's theorem \cite{St65}, the set \(\MT(\mu,\nu)\) of martingale couplings is nonempty if and only if \(\mu\lc\nu\).
	Strassen's proof is nonconstructive, and numerous works have sought concrete or canonical examples, see \cite{BeJu16, BeCoHu14, GhKiLi19, BaBeHuKa20, HuTr17, HeTo13, He19} among many others.
	Besides its intrinsic mathematical interest, this is also highly relevant for applications, for instance in mathematical finance (see e.g.\ \cite{CoHe21, GuLoWa19, AcMaPa25, HaJoLoObPa23}) and, more recently, in generative modelling \cite{AlHLLoMaPhTo26b, AlHLLoMaPhTo26a, DMPhZa26}.
	Interestingly, most known constructions are essentially limited to dimension \(d=1\) and in  either discrete time or continuous time, but rarely in a form that treats both settings simultaneously.
	
	Our main object is the \emph{martingale Schr\"odinger bridge} \(\msb\), introduced by Nutz and Wiesel \cite{NuWi24} as the optimizer of a discrete-time entropic martingale transport problem.  
    Informally, \(\msb\) is the most independent coupling of \(\mu\) and \(\nu\) subject to the martingale constraint.
	We extend this construction to arbitrary dimension and show that it admits several equivalent characterizations in both discrete and continuous time. 
	
	\subsection{Overview of five main characterizations of \(\msb\)}

	For this overview, we restrict ourselves to the  case where the entropic martingale optimal transport (MOT) problem \eqref{eq:introP} is finite and $(\mu, \nu)$ is irreducible. 
	In this regime, the five characterizations below are equivalent whenever the relevant dual maximizers exist. \eqref{eq:introP} belongs to the broader class of weak martingale transport problems, where dual attainment is a delicate problem, see \cite{BeNuTo16, BePaRiSc25, CaMaSy25}. Remarkably, Hasenbichler, Pammer, and Schrott  (\cite{HaPaSc26}, forthcoming) establish the existence of dual maximizers in $\R^d$ in the irreducible case  under mild regularity assumptions on the cost function which are satisfied by the entropy. 
	To keep the presentation concise, we suppress the precise integrability and regularity assumptions and focus only on the structural picture.

	
	\subsubsection{Entropic MOT characterization}
	
	We begin with the discrete-time martingale Schr\"odinger problem
	\begin{align}\label{eq:introP} \tag{\text{P}}
		\inf_{m\in \MT(\mu,\nu)} H(m\mid \mu\otimes \nu),
	\end{align}
	where $H(Q\mid R)$ denotes the relative entropy, i.e.\ equals $\int \log\frac{dQ}{dR}\, dQ$ if $Q\ll R$ and $+\infty$ otherwise.
	We call the unique minimizer \(\msb\) of this problem the martingale Schr\"odinger bridge.
	We note that, if $H(\nu|\gamma) < +\infty$, then taking the reference measure to be \(\mu.\gamma\), having first marginal $\mu$ and independent Gaussian increments, changes the objective only by a constant and can hence be used to define \(\msb\), see Lemma \ref{lem:relation_gaussian_analogue} below.
	
	
	\subsubsection{Gibbs form of the density through a dual WOT problem}

	Problem \eqref{eq:introP} admits a dual formulation, as already noted by Nutz and Wiesel \cite{NuWi22} in the one-dimensional case. We rather view Problem \eqref{eq:introP} as a weak transport problem in the sense of \cite{GoRoSaSh18}; see \eqref{eq:main_dual} below. The dual theory of such problems has received significant attention 
	\cite{GoRoSaTe17, AlBoCh18, BaBePa18, BaPa20, BePaRiSc25} in particular the existence of optimizers is treated by Carlier, Malamut and Sylvestre in \cite{CaMaSy25} as well as the aforementioned \cite{HaPaSc26}. Dual attainment (either in the Nutz-Wiesel form or in the weak dual \eqref{eq:main_dual}) 
	implies that \(m\in \MT(\mu,\nu)\) is the martingale Schr\"odinger bridge if and only if it has a density of the form
	\begin{equation}\label{eq:introGibbs}
		\frac{dm}{d\mu\otimes \nu}(x,y)
		=
		\exp\big(\varphi(x)+\psi(y)+h(x)\cdot(y-x)\big).
	\end{equation}
	This is the martingale analogue of the usual exponential structure in the classical Schr\"odinger problem, with an additional term \(h(x)\cdot(y-x)\) playing the role of a Lagrange multiplier for the martingale constraint.
	
	\subsubsection{Continuous-time minimizer}
	
	A continuous-time martingale Schr\"odinger bridge is obtained by minimizing a weighted quadratic energy among continuous martingales with prescribed marginals.
	Consider
	\begin{align}\label{eq:introP_cont}\tag{$P^{cont}$}
		\inf_{\substack{M_0\sim\mu,\;M_1\sim\nu,\, M_t=M_0+\int_0^t \sigma_s\,dB_s}}
		\frac12\,\E\Big[\int_0^1 \frac{|\sigma_t-I_d|_{\text{HS}}^2}{1-t}\,dt\Big].
	\end{align}
	If \(M\) is the (unique in law) optimizer of \eqref{eq:introP_cont}, then \(\Law(M_0,M_1)\) is precisely the optimizer \(\msb\) of \eqref{eq:introP}. 
	The converse direction is more interesting.
	Starting from the static optimizer \(\msb(dx,dy)=\mu(dx)\msb_x(dy)\), one may reconstruct the continuous-time optimizer fiberwise.
	Specifically, for \(x\in\supp(\mu)\), let \((M_t^x)_{t\in[0,1]}\) be the F\"ollmer martingale, defined below, with deterministic start and terminal law \(M_1^x\sim \msb_x\).
	Mixing the family \((M^x)_x\) against \(x\sim\mu\) then yields the optimizer of \eqref{eq:introP_cont}.
	
	\subsubsection{F\"ollmer martingale characterization}
	
	Given probability measures \(\bar\mu,\nu\in\PP_2(\R^d)\), consider the classical dynamic Schr\"odinger problem
	\begin{align}\label{eq:ContSchr}
		\inf_{\substack{X_0\sim \bar\mu,\;X_1\sim \nu, X_t=X_0+\int_0^t \beta_s\,ds+B_t}}
		\frac12\,\E\Big[\int_0^1 |\beta_t|^2\,dt\Big].
	\end{align}
	Its optimizer \((\Xsb_t)_{t\in[0,1]}\) is called the Schrödinger bridge, or Föllmer process, between \(\bar\mu\) and \(\nu\).
	We will use the name \emph{Föllmer martingale} for the associated Doob martingale
	\[
	M_t:=\E[\Xsb_1\mid \mathcal F_t].
	\] 
	Remarkably, every F\"ollmer martingale is the continuous-time martingale Schr\"odinger bridge between its own marginals.
	Conversely, if \((\mu,\nu)\) is irreducible, there exists a unique Föllmer martingale satisfying \(M_0\sim \mu\) and \(M_1\sim \nu\).
	The Föllmer martingale is a (time-inhomogenous) Markov martingale that can be seen as a deterministic transformation of a Schrödinger bridge.
	Specifically, we have
	\[
	M_t=f_t(\Xsb_t),
	\,\,
	f_t(z):=\E[\Xsb_1\mid \Xsb_t=z]
	= z+(1-t)\,\nabla_z\log P_{1-t}(d\nu/d\gamma\cdot e^\psi) ,
	\]
	where \( \psi \) is the second Schr\"odinger potentials of the Schrödinger bridge between $\bar \mu$ and $\nu$, and \(P_t\) denotes the heat semigroup.
	
	The link between the continuous-time and discrete-time formulations is rooted in the representation of relative entropy via adapted drifts, going back to the work of Föllmer \cite{Fo85,Fo86}. See also the survey by L{\'e}onard \cite{Le14}. In continuous time, entropy minimization admits a stochastic control representation in terms of the Föllmer drift, whose quadratic energy coincides with relative entropy, as made explicit by Lehec \cite{Le13}. This identity, further developed by Eldan, Mikulincer, and Shenfeld \cite{ElMi20,ElLeSh20}, provides a dynamic formulation of entropy that naturally bridges to the discrete-time entropic transport problems.

	\subsubsection{F\"ollmer coupling and the base measure via \(\MCov\)-conjugation}
	
	The Föllmer martingale construction can also be formulated naturally at the discrete-time level.
	To this end, define for \(\mu,\eta,\nu\in\PP_2(\R^d)\) the max-covariance functional and its entropic counterpart
	\begin{align}\label{eq:McovsIntro}
		\MCov(\mu,\eta)=\sup_{m\in\Cpl(\mu,\eta)} \int \bar x\cdot y\,m(d\bar x, dy), \quad  \mathcal E_\nu(\eta)
		=
		\sup_{\pi\in\Cpl(\eta,\nu)}
		\Big\{\int \bar x\cdot y\,d\pi-H(\pi\mid \eta\otimes \nu)\Big\}.
	\end{align}
	While the optimizer of \(\MCov\) is precisely the classical quadratic-cost optimizer, that is, the Brenier map, the optimizer of \(\mathcal E_\nu\) is the classical Schr\"odinger coupling \(\psb\) between \(\eta\) and \(\nu\), equivalently the endpoint law of the dynamic optimizer in \eqref{eq:ContSchr}.
	In particular,
	\begin{align*}
		m_{\psb} := ((\bar x,y)\mapsto (T(\bar x), y))_{\# }\psb, \quad  T(\bar x)=\bary(\psb_{\bar x})
	\end{align*}
	is the static martingale Schrödinger bridge between its marginals.
	We call \(\bar\mu\) the base measure for the martingale Schrödinger bridge between \(\mu\) and \(\nu\) if \(m_{\psb} \in \MT(\mu,\nu)\) for $\psb$ optimal for $\mathcal E_{\nu}(\bar\mu)$, and hence \(m_{\psb}=\msb\).
	Strikingly, the base measure \(\bar\mu\) can be characterized  through the \(\MCov\)-conjugate of \(\mathcal E_\nu\):
	\begin{align}\label{eq:vp}\tag{VP}
		\sup_{\eta\in\PP_2(\R^d)} \big\{\MCov(\mu,\eta)-\mathcal E_\nu(\eta)\big\}.
	\end{align}
	Any optimizer \(\eta=\bar\mu\) yields the base measure whose F\"ollmer coupling with \(\nu\) induces the martingale Schr\"odinger bridge from \(\mu\) to \(\nu\).
	Thus \(\bar\mu\) appears as the optimal intermediate law through which the martingale Schr\"odinger problem factors into a martingale part and a classical Schr\"odinger part.
	
	This variational structure becomes more transparent after lifting to the Lions space, see \cite{BaPaSc24, PiSa25, BePaSc25}.
	Indeed,  the functionals \(\eta\mapsto \MCov(\mu,\eta)\) and \(\eta\mapsto \mathcal E_\nu(\eta)\) admit liftings to functionals on \(L^2\), and in that lifted space the above \(\MCov\)-conjugation becomes an actual convex conjugation.
	This viewpoint also motivates the first-order conditions appearing in Section \ref{sec:implications} below.
	At an optimizer \(\bar\mu\) of \eqref{eq:vp}, one expects (see \cite[Section 2]{BePaSc25}) that the Wasserstein gradients satisfy
	\[ \textstyle 
	\frac{\delta}{\delta\bar\mu}\MCov(\mu,\bar\mu)
	=
	\frac{\delta}{\delta\bar\mu}\mathcal E_\nu(\bar\mu)
	\]
	up to an additive constant.
	Equivalently, for any lift \(\bar X\sim\bar\mu\), the corresponding lifted gradients coincide at \(\bar X\).
	The left-hand side comes from the martingale transport part, while the right-hand side is encoded by the Schr\"odinger potentials of the classical bridge from \(\bar\mu\) to \(\nu\).
	This first-order condition is the key mechanism by which the optimal base measure \(\bar\mu\) is linked to the Gibbs structure of \(\msb\).
	
	\subsection{How the five characterizations fit together}
	
	At a heuristic level, the whole structure is determined once the base measure \(\bar\mu\) is known.
	Indeed, let \(\bar \varphi,\psi\) be the Schr\"odinger potentials of the classical Schr\"odinger bridge \(\psb\) between \(\bar\mu\) and \(\nu\).
	Then \(\psb\) is given in Gibbs form by
	\[
	\psb(d \bar x,dy)=\bar\mu(d \bar x)\psb_{\bar x}(dy),
	\qquad
	\frac{d\psb_{\bar x}}{d\nu}(y)=\frac{e^{\bar x\cdot y+\psi (y)}}{\int e^{ \bar x\cdot z+\psi(z)}\,\nu(dz)}=e^{\bar x\cdot y+ \psi (y)+\bar \varphi (\bar x)}.
	\]
	Hence its barycentric projection is explicit:
	\[
	T(\bar x):=\bary(\psb_{\bar x})=\int y\,\psb_{\bar x}(dy)
	=\nabla_{\bar x}\log\int e^{ \bar x \cdot z+\psi(z)}\,\nu(dz).
	\]
	
	The role of the base measure is precisely that \(T_{\#}\bar\mu=\mu\).
	Once this holds, the martingale Schr\"odinger bridge is obtained simply by barycentric projection of the classical Schr\"odinger coupling,
	\[
	\msb=(T,\id)_{\#}\psb.
	\]
	In particular, the static Gibbs form of \(\msb\) is just the image of the Gibbs form of \(\psb\). As we prove below \(T\) is invertible, which yields exactly a density of the form \eqref{eq:introGibbs}.

	The resulting dictionary can be summarized by the following picture:

		
		
		
		

\[
\makebox[\textwidth][c]{%
\resizebox{\textwidth}{!}{%
\begin{tikzpicture}[baseline=(current bounding box.center)]
    \node (mubar) at (0,2) {$\bar\mu$};
    
    \node (pisym) at (4.2,2) {$\psb$};
    \node (piset) [anchor=west] at (4.45,2) {$\in\Cpl(\bar\mu,\nu)$};
    
    \node (mu) at (0,0) {$\mu$};
    
    \node (msym) at (4.2,0) {$\msb$};
    \node (mset) [anchor=west] at (4.45,0) {$\in\MT(\mu,\nu)$};
    
    \draw[->] (mubar) -- node[above] {$\text{solve }SP(\bar\mu,\nu)$} (pisym);
    \draw[->] (mubar) to[bend right=18] node[left] {$T(\bar x)=\bary(\psb_{\bar x})$} (mu);
    \draw[->] (pisym.south) -- node[right] {$(T,\id)_\#$} (msym.north);
    \draw[->] (mu) to[bend right=18] node[right] {$h=T^{-1}$} (mubar);
    \draw[->] (mu) -- node[below] {$x\mapsto \msb_x=\psb_{h(x)}$} (msym);
\end{tikzpicture}
\hspace{0.35cm}
\begin{tikzpicture}[baseline=(current bounding box.center)]
    \node[font=\large] (mubar) at (0,2) {$\bar\mu$};
    \node[font=\large] (mu)    at (0,0) {$\mu$};
    \node[font=\large] (nu)    at (4.8,1) {$\nu$};

    \node[font=\normalsize] at (2.4,2.25)
    {$dX_t = dB_t + u_t(X_t)\,dt$};

    \node[font=\normalsize] at (2.4,-0.25)
    {$M_t=\mathbb{E}[X_1\mid\mathcal{F}_t]$};

    \draw[->] (mubar) to[bend left=7] (nu);

    \draw[->] (mubar) to[bend right=18]
    node[left,font=\normalsize] {} (mu);

    \draw[->] (mu) to[bend right=18] (mubar);
    
    \draw[->] (mu) to[bend right=7] (nu);
\end{tikzpicture}
}%
}
\]

	The continuous-time objects are then equally explicit.
	The same potential \(\psi\) determines the F\"ollmer process \(\Xsb\) from \(\bar\mu\) to \(\nu\), and the associated F\"ollmer martingale is its Doob transform
	\[
	M_t=\E[\Xsb_1\mid\mathcal F_t]=f_t(\Xsb_t),
	\,\,
	f_t(z):=z+(1-t)\, \nabla_z \log P_{1-t}(d\nu/d\gamma\cdot e^\psi ) .
	\]
	Thus, once \(\bar\mu\) and \(\psi\) are identified, the F\"ollmer coupling, the martingale Schr\"odinger bridge, its Gibbs density, and its continuous-time realization all follow in a rather explicit way.
	
	\begin{figure}[h]
		\centering
		\includegraphics[width=0.8\textwidth]{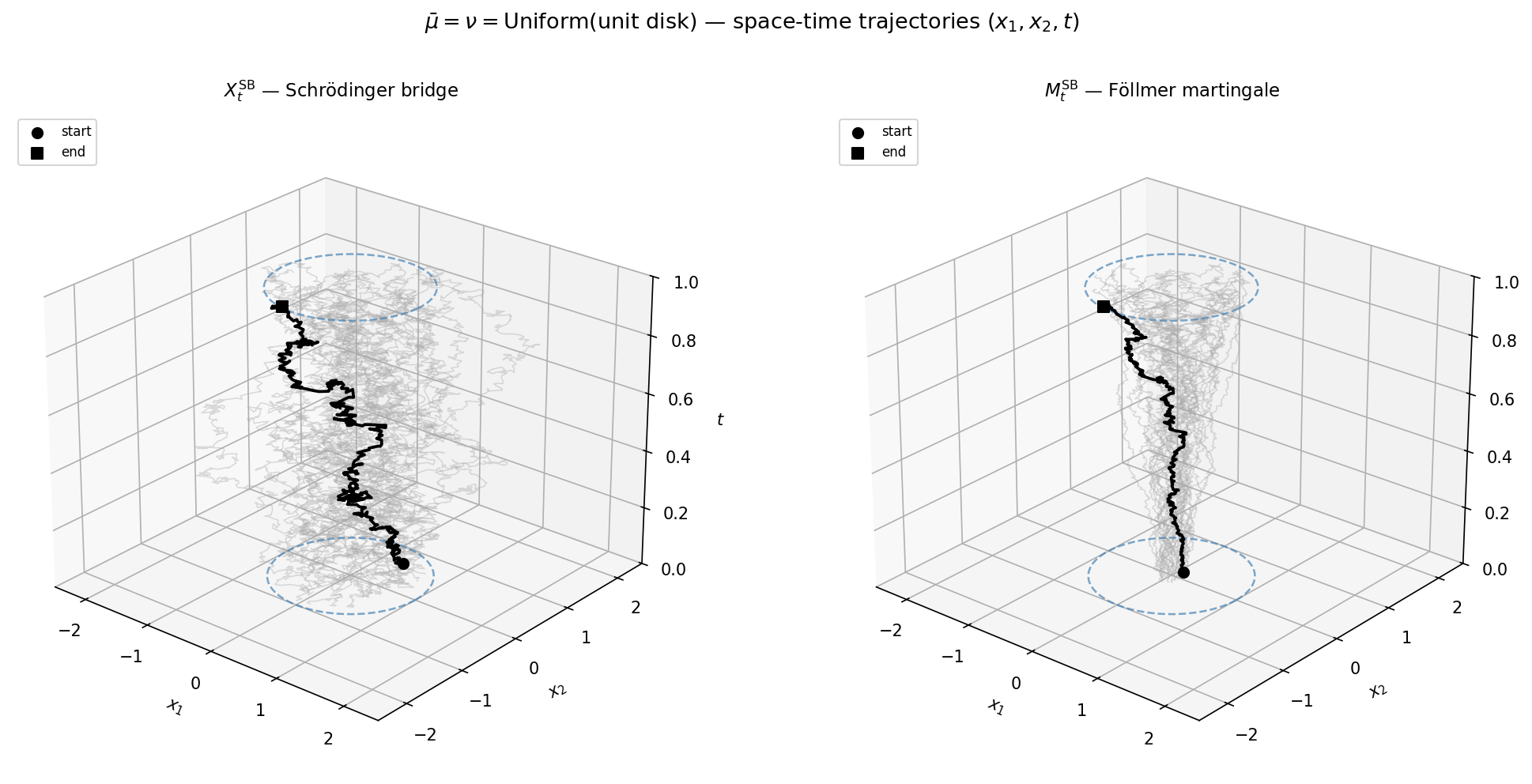}
	\end{figure}

	\subsection{Behavior under scaling and connection to filtering}\label{sec:intro_filter}
	
	A key structural feature of the F\"ollmer martingale is its simple dependence on the volatility of the reference Brownian motion.
	Write \(\msb(dx,dy)=\mu(dx)\msb_x(dy)\) for the disintegration of the (static) martingale Schrödinger bridge and let \(\sigma>0\).
	For each \(x\), consider the F\"ollmer process from \(\delta_x\) to \(\msb_x\) relative to the reference process \(\sigma B\), and denote by \(M^\sigma=(M_t^\sigma)_{t\in[0,1]}\) the corresponding Doob martingale.
	Thus \(M^\sigma\) is the F\"ollmer martingale associated with  \(\msb\) and the reference volatility \(\sigma\).
	The dependence on $\sigma$ becomes particularly  transparent after passing to a filtering representation which is of independent interest:


	{Let \((X,Y)\sim \msb\), let \(W\) be an independent Brownian motion in \(\R^d\). The variable $Y - X$ can be interpreted as a hidden static signal observed through the noisy channel
		\[
		R_s = s(Y - X) + W_s, \qquad s \geq 0.
		\]
		The minimum mean-square error estimator of this signal is the filter
		\[
		\tilde{Z}_s := \E\bigl[Y - X \,\big|\, X ,\, (R_r)_{r \leq s}\bigr].
		\]
		When $\nu$ is finitely supported, this estimation problem reduces to the classical Wonham filter \cite{Wo64}. The estimation of time-constant signal corrupted with Brownian noise have recently attracted renewed attention, as the law of the random tilts driving Eldan's stochastic localization scheme coincides with the law of the observation process when the hidden signal is distributed according to the localized measure. We refer to Klartag and Putterman \cite[Section 4]{KlPu2023}, who develop this Bayesian viewpoint in detail\footnote{We thank Daniel Lacker for pointing out to us this connection to stochastic localization.}. 	
		We will show in \Cref{thm:intro_scaling_filter} that, via the deterministic time change $s \in [0, +\infty ] \mapsto \frac{\sigma^2 s}{1+\sigma^2 s} \in [0,1]$, the process $$Z_s := X + \tilde{Z}_s = \E[Y \mid X, (R_r)_{r \leq s}]$$ is precisely the canonical form of the F\"ollmer martingale, independently of the choice of \(\sigma\). Thus all F\"ollmer martingales associated with the same data \((\mu, \nu)\), but different reference volatilities, are in fact one and the same process merely expressed in different time scales.}

	\subsection{Relation to Henry-Labord\`ere's Schr\"odinger martingale}

	It is instructive to compare our construction with the \emph{Schr\"odinger martingale} introduced by Henry-Labord\`ere in \cite{He19}.
	The two approaches are closely related in spirit, but they solve different optimization problems and should not be generally identified.
	
	Henry-Labord\`ere's starting point is a fixed prior martingale model on path space.
	Given such a prior law \(P^0\), one considers calibrated martingale laws \(P\) and minimizes the path-space relative entropy
	\[
	\inf_{P\in\mathcal M_{\rm cal}} H(P\mid P^0),
	\qquad
	H(P\mid P^0):=E^P\!\left[\log\frac{dP}{dP^0}\right].
	\]
	Thus the prior path law is part of the input, and the resulting Schr\"odinger martingale depends essentially on this choice of reference model. The change of measure w.r.t.\ the reference model translates into a change of drift for the volatility process. This approach has proved fruitful, e.g.\ in the applications by Guyon \cite{GuMeNu17}.
	
	Our construction is different in a fundamental way.
	The primary object in the present paper is the static martingale coupling \(\msb\), selected intrinsically by the endpoint optimization problem \eqref{eq:introP}.
	Only after this static step do we pass to continuous time and associate with \(\msb\) its canonical F\"ollmer martingale.
	Accordingly, our continuous-time martingale is canonical once \((\mu,\nu)\) are fixed, whereas Henry-Labord\`ere's Schr\"odinger martingale is prior-dependent.

	The two frameworks overlap only at the philosophical  level of the underlying Schr\"odinger  mechanism.
	This becomes most transparent in the Dirac-start case.
	If one fixes \(x\in\R^d\) and a terminal law \(\rho\), then  both approaches naturally involve a deformation of Brownian motion started from \(\delta_x\) and constrained by the terminal behavior. However Henry-Labord\`ere minimizes an entropy functional while our approach minimizes a weighted energy. We refer to the articles \cite{AlHLLoMaPhTo26a,AlHLLoMaPhTo26b,DMPhZa26} for recent alternative approaches and Carmona and Xu \cite{CaXu97} for a precursor.

{
    \subsection{Practical aspects}

    There are multiple ways how one would algorithmically obtain the martingale Schrödinger Bridge. One possibility is to do alternate updating of the three Lagrange multipliers appearing in \eqref{eq:introGibbs},  in the spirit of the Sinkhorn iterations for the classical Schrödinger problem. This has been proposed and studied, in dimension one and under suitable assumptions, by Chen, Conforti, Ren and Wang in \cite{ChCoReWa26}. Carlier, Malamut and Sylvestre \cite{CaMaSy25} propose to use the so-called SISTA algorithm (see \cite{CaGuGaSu23,GaNaTo25}), wherein the update of the Lagrange multiplier for the martingale constraint is done approximately through a gradient step. Yet another possibility is to work with our variational problem \eqref{eq:vp}: one can e.g.\ consider the immediate analogue of the Conze-Henry-Labord{\`e}re iterations for Bass martingales \cite{CoHe21}, or the analogue to the gradient flow of the Bass functional \cite{BaPaSc24}. By analogy with (entropic) optimal transport, we expect that the algorithms for martingale Schrödinger Bridges should run faster than the algorithms for Bass martingales in higher dimensions.

    Although Nutz and Wiesel \cite{NuWi24} envisioned martingale Schrödinger Bridges with financial calibration as their main motivation (a point of view now strengthened thanks to the continuous time variant \eqref{eq:ContSchr}), an application beyond finance already exists: 
In \cite{GuJuTa25} Guo, Juillet and Tang study \eqref{eq:introP} when $\mu$ and $\nu$ are uniform measures with finite support on $\R$, with the aim of providing a canonical construction of the football model of round-Robins tournaments. In this context they prove that the martingale Schrödinger bridge $\msb$ induces a score matrix satisfying the strong stochastic transitivity property and provide an algorithm to compute it, thereby answering a question of Aldous and Kolesnik \cite{AlKo22}. }

	\subsection{Outline of the paper}
	
	In Section \ref{sec:eq-discrete}, we show that Problem \eqref{eq:introP}, its dual counterpart \eqref{eq:main_dual}, and the variational problem \eqref{eq:vp} have the same values. 
	In Section \ref{sec:implications}, we start by showing that attainment of \eqref{eq:vp} implies attainment of \eqref{eq:main_dual}, and how the solution of \eqref{eq:main_dual} can be expressed through the solution of \eqref{eq:vp}. We then prove the converse statement by proving that attainment of \eqref{eq:main_dual} yields attainment of \eqref{eq:vp}, and show how the solution of \eqref{eq:vp} can be expressed through the solution of \eqref{eq:main_dual}.
	In Section \ref{sec:continuous} we analyze the continuous-time problem \eqref{eq:introP_cont}, establishing its equality with \eqref{eq:introP}, and developing in detail the concept of Föllmer martingale.
	In Section \ref{sec:filtering} we elaborate in more detail on the prior-free nature of the Föllmer martingale and its connection to filtering.
	In Section \ref{sec:examples} we study two examples; the Gaussian case and a simple discrete case, where we can explicitly describe the optimizer as well as compare it to the better known \emph{Bass martingales}.\\

	\emph{Notation:} Throughout $\mu,\nu\in \PP_{2}(\Rd) $ are assumed to be in convex order, denoted by $\mu \lc \nu$, and meaning that $\int \phi \, d\mu \leqslant \int \phi \, d\nu$ holds for all convex functions $\phi \colon \Rd \rightarrow \R$. We denote by $H(\cdot|\cdot)$ the relative entropy between probability measures (also called KL divergence). $\gamma_x^t$ stands for the Gaussian distribution centred at $x$ and with covariance matrix $tI$, but we write for brevity $\gamma:=\gamma_0^1$, $\gamma^t:=\gamma_0^t$, and $\gamma_x:=\gamma_x^1$. We denote by $\mathcal W_2$ the quadratic Wasserstein distance. We write $\bar\mu.\gamma(dx,dy):=\bar\mu(dx)\gamma_x(dy)$, and if $\pi$ is a probability measure on $\Rd\times\Rd$ we write $\pi_x$ for its disintegration with respect to the first (i.e.\, its initial) marginal, while $p_1(\pi)$ and $p_2(\pi)$ stand for the first and second marginals of $\pi$. $\Cpl(\mu,\nu)$ stands for the set of coupling with first resp.\ second marginal $\mu$ resp.\ $\nu$, while $\MT(\mu,\nu)$ stands for its subset of martingale couplings. Finally, if $g$ is a function and $\rho$ is a measure, we write $g\ast \rho(x)=\int g(x-z)\rho(dz)$ for their convolution.\\

	\section{Preliminaries and equality of values for the discrete-time problems}
	\label{sec:eq-discrete}
	
	This work is concerned with the relation between multiple optimization problems. The main one is
	\begin{align}\tag{P}
		\inf_{m\in\MT(\mu,\nu)} H(m|\mu\otimes \nu),
	\end{align}
	which is clearly equivalent to $\inf_{m\in\MT(\mu,\nu)} \int H(m_x|\nu)\mu(dx)$. 
	As we will see, this problem has a related continuous-time counterpart
	\begin{align}\tag{$P^{cont}$}
		\inf_{\substack{M_{0} \sim \mu, \, M_{1} \sim \nu, \, M_{t} = M_{0} + \int_{0}^{t}  \sigma_{s} \, dB_{s}}} \frac{1}{2}
		\mathbb{E}\Big[\int_{0}^{1} \frac{\vert \sigma_{t} - I \vert^{2}}{1-t} \, dt\Big],
	\end{align}
	where for matrix norm we use the Hilbert-Schmidt one,  the infimum runs over the class of filtered probability spaces supporting a $d$-dimensional Brownian motion $B$, and where $M$ is a continuous martingale satisfying the stated conditions.
	On the dual side, we will encounter the problem
	\begin{align}\label{eq:main_dual}\tag{D}
		\sup_{\psi\in C_b(\Rd)}\left\{\int\psi d\nu +\int  \sup_{h\in\Rd}\left[ h\cdot x-\log\int\exp(h\cdot y+\psi(y))\nu(dy) \right ] \mu(dx)\right\},
	\end{align}
	Finally, we will be concerned with the variational problem
	\begin{align}\tag{VP}
		\sup_{\bar\mu\in\PP_2(\Rd)}\left\{ \inf_{\pi\in \Cpl(\bar\mu,\nu) }\left[ H(\pi|\bar\mu \otimes \nu)-\int \bar x\cdot y \,\pi(d\bar x,dy) \right] + \MCov(\bar\mu,\mu)  \right\},
	\end{align}
	where here and throughout $$\MCov(\bar\mu,\mu):=\sup_{\pi\in\Cpl(\bar\mu,\mu)}\int \bar x\cdot x\,\pi(d\bar x,x).$$
	We recall that $\pi$ (uniquely) attains the Schrödinger problem 
	\begin{align}
		\inf_{\pi\in \Cpl(\bar\mu,\nu) }\left[H(\pi|\bar\mu\otimes \nu) - \int \bar x\cdot y \,\pi(d\bar x,dy)\right] \label{eq:SP}\tag{$SP(\bar \mu,\nu)$}
	\end{align} iff $\frac{d\pi}{d\bar\mu \otimes \nu}(\bar x,y)= \exp( \bar\varphi(\bar x) + \psi(y) +\bar x\cdot y)$, where $(\bar\varphi,\psi)$ solve the associated Schrödinger system
	\begin{align}\label{eq:SchS1}\tag{SS1}
		\int \exp( \bar\varphi(\bar x) + \psi(y) +\bar x\cdot y) \nu(dy)&=1 \,\,(\bar\mu(d\bar x)-a.s.);\\
		\int \exp(\bar\varphi(\bar x) + \psi(y) + \bar x\cdot y) \bar\mu(d\bar x)&=1 \,\,(\nu(d y)-a.s.) \label{eq:SchS2}\tag{SS2}
	\end{align}
	The solution $(\bar\varphi,\psi)$ is unique except for the fact that $(\bar\varphi + c,\psi -c)$ is also a solution for all $c \in \R$. 
	Since the integral in \eqref{eq:SchS1} is everywhere defined, in the following we can and will uniquely extend $\bar\varphi(\cdot)$ by requiring that \eqref{eq:SchS1} holds for all $\bar x\in\Rd$. Similarly so for $\psi$. H\"older's inequality shows that $\bar x\mapsto\log \int \exp(\psi(y) + \bar x\cdot y) \nu(dy)$ is convex, hence $\bar\varphi$ is concave (and so is $\psi$).

	In this part we first establish that the primal and dual problems share the same value, and then prove that this common value coincides with that of the variational problem.
	
	\begin{lemma}\label{lem:V_D_eq}
		value\eqref{eq:introP}$=$ value\eqref{eq:main_dual}.
	\end{lemma}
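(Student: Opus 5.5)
We prove value\eqref{eq:introP}$=$value\eqref{eq:main_dual} by splitting it into weak duality, obtained by a direct Gibbs/Donsker–Varadhan computation, and strong duality, obtained by treating \eqref{eq:introP} as a weak optimal transport problem and invoking Kantorovich duality for weak transport \cite{GoRoSaSh18, BaBePa18}.

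\emph{Weak duality.} Take $m\in\MT(\mu,\nu)$ with finite entropy, $\psi\in C_b(\Rd)$, and any measurable $h\colon\Rd\to\Rd$ (first restricted to bounded $h$, then extended by approximation). The Donsker–Varadhan variational formula for relative entropy gives, for $\mu$-a.e.\ $x$,
\[
H(m_x|\nu)\ \ge\ \int \big(h(x)\cdot y+\psi(y)\big)\,m_x(dy)-\log\int e^{h(x)\cdot y+\psi(y)}\,\nu(dy).
\]
By the martingale property $\int y\,m_x(dy)=x$, the right-hand side equals $h(x)\cdot x+\int\psi\,dm_x-\log\int e^{h(x)\cdot y+\psi(y)}\nu(dy)$. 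We integrate in $\mu(dx)$ and use that the second marginal of $m$ is $\nu$. Taking the supremum over $h$ pointwise in $x$, which requires a measurable selection or approximation by bounded measurable $h$, yields value\eqref{eq:introP}$\ge$value\eqref{eq:main_dual}. The integrability needed to split the integrals comes from $\mu,\nu\in\PP_2$.

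\emph{Strong duality.} Define the weak cost $C(x,\rho):=H(\rho|\nu)+\chi_{\{\bary(\rho)=x\}}$, where $\chi$ is the convex indicator (zero on the set, $+\infty$ off it). Then \eqref{eq:introP} is $\inf_{m\in\Cpl(\mu,\nu)}\int C(x,m_x)\,\mu(dx)$. For each $x$ the map $\rho\mapsto C(x,\rho)$ is convex, and jointly $C$ is lower semicontinuous for the $\mathcal W_1$ (or weak plus first moment) topology: entropy is weakly l.s.c., and the barycenter constraint is closed under $\mathcal W_1$ convergence. Since $C\ge 0$, weak transport duality gives
\[
\text{value\eqref{eq:introP}}=\sup_{\psi\in C_b}\Big\{\int\psi\,d\nu+\int R_C\psi\,d\mu\Big\},\qquad R_C\psi(x)=\inf_{\rho}\Big[C(x,\rho)-\int\psi\,d\rho\Big].
\]
It then remains to compute $R_C\psi(x)=\inf_{\rho:\bary\rho=x}\big[H(\rho|\nu)-\int\psi\,d\rho\big]$. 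This is an entropy minimization under a linear moment constraint, and Lagrangian/Fenchel duality in finite dimension yields
\[
R_C\psi(x)=\sup_{h\in\Rd}\Big[h\cdot x-\log\int e^{h\cdot y+\psi(y)}\nu(dy)\Big].
\]
The inequality ``$\ge$'' is exactly the Donsker–Varadhan step above. For ``$\le$'': the function $\Lambda(h)=\log\int e^{h\cdot y+\psi}\,d\nu$ is convex, and its Legendre transform at $x$ equals the constrained entropy minimum. This is standard Cramér/Csiszár theory. When $x$ lies in the relative interior of $\conv\supp\nu$, the supremum is attained by an exponential tilt. On the boundary, and outside, both sides agree (possibly both being $+\infty$) by lower semicontinuity of the convex conjugate.

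\emph{Main obstacle.} The delicate point is verifying the hypotheses of the weak transport duality theorem, since $C$ can take the value $+\infty$ and must be l.s.c.\ in the right topology on $\PP_1$. A second issue is the boundary case in the pointwise conjugacy, where $x\in\partial\conv\supp\nu$ and no maximizing $h$ exists. To handle the first, I would approximate by penalizing the barycenter constraint, $C_n(x,\rho)=H(\rho|\nu)+n|\bary\rho-x|$. These costs are finite, convex and l.s.c., so duality applies to each $C_n$, and I would pass to the limit $n\to\infty$ using monotone convergence of the primal values together with the inequality from weak duality above.
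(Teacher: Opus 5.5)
Your proposal is correct and follows the paper's proof. Like the paper, you view \eqref{eq:introP} as a weak transport problem with cost $H(\rho|\nu)$ plus the barycenter constraint, apply weak transport duality \cite{BaBePa18}, and identify the inner barycenter-constrained entropy minimization with $\sup_{h}\{h\cdot x-\log\int e^{h\cdot y+\psi(y)}\,\nu(dy)\}$ by finite-dimensional convex duality, which the paper simply asserts.

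The penalization by $C_n$ is unnecessary, because that duality theorem already covers lower semicontinuous, bounded-below costs with values in $\R\cup\{+\infty\}$ that are convex in $\rho$; the $C_n$ are also not finite as you claim, since $H(\rho|\nu)$ can be $+\infty$. Likewise, attainment of the supremum by an exponential tilt in the relative interior needs exponential moments such as (H1), which the lemma does not assume, but your argument only uses the conjugacy identity itself.
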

	
	\begin{proof}
		Duality for weak optimal transport (see, e.g., \cite{BaBePa18}) gives that \eqref{eq:introP} is equal to the dual problem
		$$\sup_\psi\left\{\int\psi d\nu + \int \inf_{p\, s.t.\, \text{mean}(p)=x}\left [ H(p|\nu) -\int\psi dp \right]\mu(dx)\right\}.$$
		Now for the minimization problem under the integral, by duality  $$\inf_{p\, s.t.\, \text{mean}(p)=x}\left [ H(p|\nu) -\int\psi dp \right]=\sup_{h\in\Rd}\left[h\cdot x-\log\int \exp(h\cdot y +\psi(y))\nu(dy) \right].\qedhere$$
	\end{proof}
	
	In fact, duality in the above proof also suggests the existence of optimal $h(x)$ there, and that $p_x$ is optimal iff $\frac{dp_x}{d\nu}(y)=\frac{\exp(h(x)\cdot y + \psi(y))}{Z_{x,\psi}}$, with $Z_{x,\psi}$ a normalizing constant. In particular if $\psi$ is optimal, then the map $h$ pushes forward $\mu$ to $\bar\mu$. We will formalize these intuitions soon. Now we show that the primal and dual problems share the same value as the variational problem.
	
	\begin{lemma}\label{lem:P_VP_eq}
		value\eqref{eq:introP}$= $ value\eqref{eq:vp}.
	\end{lemma}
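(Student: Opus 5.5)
We will prove the two inequalities separately, using Lemma \ref{lem:V_D_eq} to replace value\eqref{eq:introP} by value\eqref{eq:main_dual} where convenient.

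\emph{Step 1: value\eqref{eq:vp} $\le$ value\eqref{eq:introP}.} Fix $\bar\mu\in\PP_2(\Rd)$ and $m\in\MT(\mu,\nu)$ with $H(m|\mu\otimes\nu)<\infty$. Let $\theta\in\Cpl(\bar\mu,\mu)$ attain $\MCov(\bar\mu,\mu)$. Glue $\theta$ with $m$ along the common marginal $\mu$ to obtain $(\bar X,X,Y)$ with $(\bar X,X)\sim\theta$, $(X,Y)\sim m$, and $\bar X,Y$ conditionally independent given $X$. Set $\pi:=\Law(\bar X,Y)\in\Cpl(\bar\mu,\nu)$.
\begin{itemize}
\item The martingale property $\E[Y|X,\bar X]=X$ gives $\int\bar x\cdot y\,d\pi=\E[\bar X\cdot X]=\MCov(\bar\mu,\mu)$.
\item The data processing inequality, using the kernel $x\mapsto\theta_x$ applied to the first coordinate, gives $H(\pi|\bar\mu\otimes\nu)\le H(\theta\otimes_\mu m\,|\,\theta\otimes\nu)=H(m|\mu\otimes\nu)$.
\end{itemize}
Hence the bracket in \eqref{eq:vp} evaluated at $\pi$ is at most $H(m|\mu\otimes\nu)-\MCov(\bar\mu,\mu)$. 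Adding $\MCov(\bar\mu,\mu)$ and optimizing over $m$ and $\bar\mu$ gives the claim. The integrability needed here is routine, since everything lies in $\PP_2$.

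\emph{Step 2: value\eqref{eq:main_dual} $\le$ value\eqref{eq:vp}.} Fix $\psi\in C_b(\Rd)$ and write $\Lambda(h):=\log\int e^{h\cdot y+\psi(y)}\nu(dy)$. This $\Lambda$ is convex and finite, with $\nabla\Lambda(h)=\bary(p^h)$ for the tilted law $p^h$. The dual integrand is $\Lambda^*(x)$. We want a $\bar\mu$ for which the \eqref{eq:vp} objective dominates $\int\psi\,d\nu+\int\Lambda^*d\mu$. Entropic duality for \eqref{eq:SP} gives, for any $\bar\mu$,
\[
\inf_{\pi\in\Cpl(\bar\mu,\nu)}\Big[H(\pi|\bar\mu\otimes\nu)-\int\bar x\cdot y\,d\pi\Big]\ \ge\ \int\psi\,d\nu-\int\Lambda\,d\bar\mu,
\]
by testing with the pair $(-\Lambda,\psi)$. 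So it suffices to find $\bar\mu$ with $\MCov(\bar\mu,\mu)-\int\Lambda\,d\bar\mu\ge\int\Lambda^*d\mu$. Any coupling $\theta\in\Cpl(\bar\mu,\mu)$ already gives $\MCov\ge\int\bar x\cdot x\,d\theta$, and Fenchel--Young gives $\bar x\cdot x\le\Lambda(\bar x)+\Lambda^*(x)$. The inequality we need is the reverse one, which requires equality, i.e.\ $x=\nabla\Lambda(\bar x)$ on the support of $\theta$. We therefore want $\bar\mu$ with $(\nabla\Lambda)_\#\bar\mu=\mu$, taking $\theta=(\id,\nabla\Lambda)_\#\bar\mu$. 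This is exactly the base-measure structure described in the introduction.

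\emph{Main obstacle.} Such a $\bar\mu$ exists only if $\mu$ is carried by the range of $\nabla\Lambda$. That range is essentially $\ri\conv\supp\nu$, but it may fail to contain it, and $\psi$ is arbitrary. To get around this, I would first approximate: choose $x$-measurable near-maximizers $h_\epsilon(x)$ in $\Lambda^*(x)=\sup_h[h\cdot x-\Lambda(h)]$ with $h\cdot x-\Lambda(h)\ge\Lambda^*(x)-\epsilon$, and set $\bar\mu_\epsilon:=(h_\epsilon)_\#\mu$ (truncating to keep $\bar\mu_\epsilon\in\PP_2$). Using $\theta=(h_\epsilon,\id)_\#\mu$ then gives
\[
\MCov(\bar\mu_\epsilon,\mu)-\int\Lambda\,d\bar\mu_\epsilon\ \ge\ \int\big[h_\epsilon(x)\cdot x-\Lambda(h_\epsilon(x))\big]\mu(dx)\ \ge\ \int\Lambda^*d\mu-\epsilon.
\]
This sidesteps invertibility of $\nabla\Lambda$ entirely, since we only need a coupling and not a map. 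Consequently the delicate points reduce to the following:
\begin{itemize}
\item a measurable selection of $h_\epsilon$;
\item handling points $x$ with $\Lambda^*(x)=+\infty$, which is harmless because $\mu\lc\nu$ forces $\Lambda^*<\infty$ $\mu$-a.e.\ whenever the dual value is finite;
\item the second-moment truncation together with monotone convergence.
\end{itemize}
Letting $\epsilon\to0$ and taking the supremum over $\psi$, combined with Lemma \ref{lem:V_D_eq}, closes the chain of inequalities.
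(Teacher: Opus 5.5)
Your proof is correct, but it is organized differently from the paper's. The paper proves value\eqref{eq:vp} $=$ value\eqref{eq:main_dual} in a single chain of equalities:
\begin{itemize}
\item it inserts strong duality for $SP(\bar\mu,\nu)$ in log-partition form and linearizes the logarithm via $\log p=\inf_r\{e^rp-r-1\}$;
\item it interchanges the suprema over $\bar\mu$ and over the potentials;
\item it rewrites $\sup_{\bar\mu}\{\int f\,d\bar\mu+\MCov(\bar\mu,\mu)\}$ as $\int\sup_{\bar x}\{f(\bar x)+\bar x\cdot x\}\,\mu(dx)$ (the ``$\bar X$ free'' step);
\item it optimizes out $\bar\varphi$ pointwise.
\end{itemize}
Your Step 1 gluing argument appears in the paper only as an optional illustration, with convexity of entropy in place of your data-processing inequality; it is the same argument.

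You instead close the loop value\eqref{eq:main_dual} $\le$ value\eqref{eq:vp} $\le$ value\eqref{eq:introP} $=$ value\eqref{eq:main_dual}. For the first inequality you use only weak duality for $SP(\bar\mu,\nu)$, tested on the pair $(-\Lambda,\psi)$. This is exactly the pair the paper's chain arrives at after optimizing out $\bar\varphi$. You combine it with an explicit $\epsilon$-optimal, truncated base measure $(h_\epsilon)_\#\mu$, which is what the paper's ``$\bar X$ free'' step does implicitly.

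What each route buys: yours avoids strong entropic duality and the interchange of suprema, and it makes the measurable-selection and $\PP_2$ issues explicit. The paper's is shorter and gives the equality with \eqref{eq:main_dual} directly, without passing through the primal problem (both routes rely on Lemma \ref{lem:V_D_eq}).

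Two minor remarks, neither affecting correctness:
\begin{itemize}
\item No exponential moments of $\nu$ are assumed in Section 2, so $\Lambda$ need not be finite or differentiable. This is harmless: near-maximizers automatically lie in $\dom\Lambda$, and the test pair only needs $\Lambda<\infty$ $\bar\mu_\epsilon$-a.e.
\item Truncating to $|h|\le R$ already disposes of the points where $\Lambda^*=+\infty$, so the appeal to convex order is unnecessary. Indeed, $\Lambda^*_R(x):=\sup_{|h|\le R}\{h\cdot x-\Lambda(h)\}$ is finite, bounded below by $-\Lambda(0)$, and increases to $\Lambda^*$, so monotone convergence covers the case $\int\Lambda^*d\mu=+\infty$ as well.
\end{itemize}
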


	\begin{proof}
		We will prove this equality using the dual formulation of \eqref{eq:introP}. Before that, we provide a direct argument for the inequality value\eqref{eq:introP}$\geq $ value\eqref{eq:vp}, which we find illustrative but can be skipped: 
		Let $m\in \MT(\mu,\nu)$ and $\kappa$ a $\MCov$-optimal coupling from $\bar \mu$ to $\mu$. Then introduce $\pi(d\bar x,dy):=\bar\mu(d\bar x)\int_x m_x(dy)\kappa_{\bar x}(dx)\in\Cpl(\bar\mu,\nu)$ and $\int y\pi_{\bar x}(dy)=\int x \kappa_{\bar x}(dx)$, so $\int \bar x\cdot y \,\pi(d\bar x,dy)=\int \bar x\cdot x\, \kappa(d\bar x,dx)=\MCov(\bar\mu,\mu)$. By convexity of the entropy we have
		$$ \int H(m_x|\nu)\mu(dx) = \iint H(m_x|\nu)\kappa_{\bar x}(dx)\bar\mu(d\bar x)   \geq \int H( \pi_{\bar x}|\nu)\bar\mu(d\bar x).$$ All in all
		\begin{align} \notag
			\int H(m_x|\nu)\mu(dx) & \geq \int H( \pi_{\bar x}|\nu)\bar\mu(d\bar x)-\iint \bar x\cdot y\, \bar\pi(d\bar x,dy)+\MCov(\bar\mu,\mu) \\ & \geq \inf_{\pi\in \Cpl(\bar\mu,\nu) }\left[H(\pi|\bar\mu\otimes \nu) - \iint \bar x\cdot y \,\pi(d\bar x,dy)\right]   + \MCov(\bar\mu,\mu). \label{eq:ineq_VP}
		\end{align}
		We conclude by taking infimum over the relevant $m$ and supremum over $\bar\mu$. For later reference, observe that the first inequality in \eqref{eq:ineq_VP} becomes an equality if $\kappa$ is induced by a map.
		
		Now we prove the desired equality. For that matter we show that value\eqref{eq:vp} $=$ value\eqref{eq:main_dual}, and conclude thanks to Lemma \ref{lem:V_D_eq}. We will repeatedly use the fact that $\inf_{r\in\R}\{ e^{r}p -r-1\}=\log(p)$,
		for $p\geq 0$. Duality for \eqref{eq:SP} says
		\begin{align*}
			value \eqref{eq:SP} & = \sup_{\bar\varphi,\psi}\left \{ \int \bar\varphi d\bar\mu +\int \psi d\nu-\log\int e^{\bar\varphi(\bar x)}\left[  e^{\psi(y)+\bar x\cdot y}\right ]\bar\mu(d\bar x)  \right \} \\
			&= \int \psi d\nu + \int \bar\varphi d\bar\mu + \sup_{r\in\R}\left \{1+r-e^r \int e^{\bar\varphi(\bar x)}\int  e^{\psi(y)+\bar x\cdot y}\nu(dy)\bar\mu(d\bar x)  \right \}.
		\end{align*}
		Thus the value of \eqref{eq:vp} is equal to
		\begin{align*}
			&  \sup_{\bar\varphi,\psi,r}\left\{\int\psi d\nu+1+ \sup_{\bar\mu}\left (r+ \int \bar\varphi d\bar\mu -e^r \int e^{\bar\varphi(\bar x)}\int  e^{\psi(y)+\bar x\cdot y}\nu(dy)\bar\mu(d\bar x) +\MCov(\bar\mu,\mu)\right)  \right\} \\
			=& \sup_{\bar\varphi,\psi}\left\{\int\psi d\nu+1+ \sup_{\substack{X\sim\mu ,\bar X \mbox{ free}}} \E\left (  \bar\varphi(\bar X) -  e^{\bar\varphi(\bar X)}\int  e^{\psi(y)+\bar X\cdot y}\nu(dy) +\bar X\cdot X\right)  \right\}\\
			=& \sup_{\bar\varphi,\psi}\left\{\int\psi d\nu+1+  \int\sup_{\bar x}\left (  \bar\varphi(\bar x) -  e^{\bar\varphi(\bar x)}\int  e^{\psi(y)+\bar x\cdot y}\nu(dy) +\bar x\cdot x\right)  \mu(dx)\right\} \\
			=& \sup_{\psi}\left\{\int\psi d\nu+1+  \sup_{\bar\varphi}\int\sup_{\bar x}\left (  \bar\varphi(\bar x) -  e^{\bar\varphi(\bar x)}\int  e^{\psi(y)+\bar x\cdot y}\nu(dy) +\bar x\cdot x\right)  \mu(dx)\right\} \\
			=& \sup_{\psi}\left\{\int\psi d\nu+1+  \int\sup_{\bar x}\sup_q\left (  q -  e^{q}\int  e^{\psi(y)+\bar x\cdot y}\nu(dy) +\bar x\cdot x\right)  \mu(dx)\right\}\\
			=& \sup_{\psi}\left\{\int\psi d\nu+  \int\sup_{\bar x}\left (  -\log  \int  e^{\psi(y)+\bar x\cdot y}\nu(dy) +\bar x\cdot x\right)  \mu(dx)\right\} ,
		\end{align*}
		giving exactly the value of \eqref{eq:main_dual}.
	\end{proof}
	
	We stress that the previous proof is significantly shorter than the proof of the analogous result in the Bass case (see \cite{BaScTs23}). In fact, the above proof can serve as an inspiration for a very short proof of said result in the Bass case. We report this here, for the sake of completeness, but the reader can skip this part without a thought. In fact, the following proof is in the spirit of the so-called Toland's duality; see e.g.\ \cite{Ca08}.
	
	\begin{lemma}[Lemma 3.4  in \cite{BaScTs23}]
		$$\sup_{m\in\MT(\mu,\nu)}\int \MCov(m_x,\gamma)\mu(dx)=\inf_{\alpha\in\P_2(\Rd)}\{\MCov(\nu,\alpha\ast\gamma)-\MCov(\mu,\alpha) \} .$$
	\end{lemma}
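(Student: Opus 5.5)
Following the remark before the statement, I would run the Toland-type argument used in the proof of Lemma~\ref{lem:P_VP_eq}, with the entropic Schrödinger value replaced by the max-covariance functional $\alpha\mapsto\MCov(\nu,\alpha\ast\gamma)$.

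\emph{Weak duality.} Fix $m\in\MT(\mu,\nu)$ and $\alpha\in\P_2(\Rd)$, and let $\kappa$ be an $\MCov$-optimal coupling of $\alpha$ and $\mu$, so that $\int a\cdot x\,\kappa(da,dx)=\MCov(\mu,\alpha)$. For each $x$, let $q_x$ be an $\MCov$-optimal coupling of $m_x$ and $\gamma$. Build a coupling of $\nu$ and $\alpha\ast\gamma$ as follows: draw $(a,x)\sim\kappa$; given $x$, draw $(y,g)\sim q_x$; then pair $y$ with $a+g$.
\begin{itemize}
\item The first marginal is $\nu$, because $m\in\MT(\mu,\nu)$.
\item The second marginal is $\alpha\ast\gamma$, because $g\sim\gamma$ independently of $a$ (given $x$, $g$ does not depend on $a$, and its law is $\gamma$ for every $x$).
\end{itemize}
The covariance of this coupling is $\E[y\cdot a]+\E[y\cdot g]$. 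By the martingale property, $\E[y\mid a,x]=x$, so the first term equals $\E[x\cdot a]=\MCov(\mu,\alpha)$. The second term equals $\int\MCov(m_x,\gamma)\,\mu(dx)$. Hence
\[
\MCov(\nu,\alpha\ast\gamma)\ \ge\ \MCov(\mu,\alpha)+\int\MCov(m_x,\gamma)\,\mu(dx),
\]
which gives the inequality $\le$ in the statement.

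\emph{Reverse inequality.} I would dualize $\MCov(\nu,\alpha\ast\gamma)$ by Kantorovich duality:
\[
\MCov(\nu,\alpha\ast\gamma)=\inf_{\psi}\Big\{\int\psi\,d\nu+\int\psi^*\,d(\alpha\ast\gamma)\Big\},
\]
where $\psi$ ranges over convex functions and $\psi^*$ is the Legendre transform. Writing $\int\psi^*\,d(\alpha\ast\gamma)=\int(\psi^*\ast\gamma)\,d\alpha$, the right-hand side of the statement becomes
\[
\inf_\alpha\inf_\psi\Big\{\int\psi\,d\nu+\int(\psi^*\ast\gamma)\,d\alpha-\MCov(\mu,\alpha)\Big\}.
\]
Exchange the two infima. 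For fixed $\psi$, the infimum over $\alpha$ is computed as in Lemma~\ref{lem:P_VP_eq}: lift to a random variable $A\sim\alpha$ chosen freely jointly with $X\sim\mu$, so that the $\MCov$ term becomes $\E[A\cdot X]$ and the optimization decouples pointwise. The infimum over $\alpha$ then equals
\[
-\int\big(\psi^*\ast\gamma\big)^*(x)\,\mu(dx).
\]
The right-hand side therefore becomes $\inf_\psi\big\{\int\psi\,d\nu-\int(\psi^*\ast\gamma)^*\,d\mu\big\}$. This is exactly the weak-transport dual (as in Lemma~\ref{lem:V_D_eq}, via \cite{BaBePa18}) of the left-hand side, for the concave cost $C(x,p)=\MCov(p,\gamma)$ under the constraint $\mathrm{mean}(p)=x$.

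To confirm the identification, I would compute the inner problem: $\sup\{\MCov(p,\gamma)-\int\psi\,dp:\ \mathrm{mean}(p)=x\}$. Introducing a Lagrange multiplier $h$ for the mean constraint and dualizing $\MCov$ turns it into a Legendre transform that reproduces $(\psi^*\ast\gamma)^*(x)$ up to signs. Weak transport duality, together with the already proved weak duality, then closes the gap.

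\emph{Main obstacle.} I expect the hardest step to be this last identification. It requires justifying the minimax exchange in the dual of the inner problem, i.e.\ showing that
\[
\sup_{\mathrm{mean}(p)=x}\Big\{\MCov(p,\gamma)-\int\psi\,dp\Big\}=(\psi^*\ast\gamma)^*(x)
\]
(with the correct sign conventions), and handling the integrability issues that arise because $\psi$ is unbounded. I would first treat $\nu$ compactly supported, so that $\psi$ can be taken Lipschitz, and then conclude by approximation using continuity of $\MCov$ in $\mathcal W_2$.
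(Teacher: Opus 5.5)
Your reverse-inequality chain (dualize $\MCov(\nu,\alpha\ast\gamma)$, swap the infima, lift to $(A,X)$ and decouple pointwise) is exactly the paper's computation. The paper stops at $\inf_\psi\{\int\psi\,d\nu-\int(\psi^*\ast\gamma)^*\,d\mu\}$ and invokes \cite{BaBeScTs23}, where this dual is shown to equal $\sup_{m\in\MT(\mu,\nu)}\int\MCov(m_x,\gamma)\,\mu(dx)$. You differ from the paper in two respects.

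\emph{The gluing argument.} Your construction proves $\le$ without any duality theory. It is correct: given $x$, $g$ has law $\gamma$ regardless of $a$, so $g$ is independent of $(a,x)$, and $\E[y\mid a,x]=x$. It is the Bass analogue of the direct argument at the start of the proof of Lemma~\ref{lem:P_VP_eq}.

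\emph{Replacing the citation.} Instead of citing \cite{BaBeScTs23}, you rederive the final identification from general weak transport duality. This makes the proof self-contained modulo \cite{BaBePa18}, at the price of the inner computation.

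On that inner computation, the sign should be
\[
v_\psi(x):=\sup_{\mathrm{mean}(p)=x}\Big\{\MCov(p,\gamma)-\int\psi\,dp\Big\}=-(\psi^*\ast\gamma)^*(x).
\]
The step is also easier than you expect and needs no infinite-dimensional minimax. Since $\MCov(\cdot,\gamma)$ is concave under mixtures and the constraint is linear, $v_\psi$ is concave. Optimizing pointwise over kernels $g\mapsto y$ gives
\[
\sup_{z}\{v_\psi(z)+h\cdot z\}=\sup_{p}\Big\{\MCov(p,\gamma)-\int(\psi(y)-h\cdot y)\,p(dy)\Big\}=\int\psi^*(g+h)\,\gamma(dg)=\psi^*\ast\gamma(h).
\]
Hence $v_\psi=-(\psi^*\ast\gamma)^*$ whenever $v_\psi$ is upper semicontinuous. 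The dual functions in the $\P_2$-version of weak transport duality are finite-valued, and for them $v_\psi\ge-\psi>-\infty$ (take $p=\delta_z$). So $v_\psi$ is either $\equiv+\infty$ or a finite concave, hence continuous, function, and you are done.

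I would drop the compact-support approximation. It is unnecessary, and as stated it is delicate for two reasons.
\begin{enumerate}
\item The limit passage needs $\mathrm{RHS}(\mu,\nu)\le\liminf_n\mathrm{RHS}(\mu_n,\nu_n)$. But an infimum over $\alpha$ of $\mathcal W_2$-continuous functionals is only upper semicontinuous, unless you control the second moments of near-optimal $\alpha_n$.
\item A Lipschitz $\psi$ on all of $\R^d$ has $\psi^*=+\infty$ outside a ball. It is therefore useless against the full-support measure $\alpha\ast\gamma$, unless you conjugate relative to the support.
\end{enumerate}
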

	
	\begin{proof}
		By duality for $\MCov$, the r.h.s.\ problem is equal to
		\begin{align*}
			\inf_\alpha\left\{ \inf_{\psi}\left [ \int \psi^*d(\alpha\ast\gamma)+\int\psi d\nu  \right ]- \sup_{A\sim\alpha,X\sim\mu}\mathbb E[A\cdot X] \right\} = & \inf_{\psi} \left\{\inf_{A,X\sim\mu}\left[ \psi^\ast\ast\gamma(A)-A\cdot X \right ] + \int\psi d\nu \right\} \\ = & \inf_{\psi} \left\{-\sup_{A,X\sim\mu}\left[ A\cdot X - \psi^\ast\ast\gamma(A)\right ] + \int\psi d\nu \right\} \\ 
			=& \inf_{\psi} \left\{ \int \psi d\nu- \int (\psi^\ast\ast\gamma)^\ast d\mu \right\},
		\end{align*}
		and this last problem is, thanks to \cite{BaBeScTs23}, equal to the l.h.s.\ problem in the statement of this lemma.
	\end{proof}

	\section{Implications of attainment}
	
	\label{sec:implications}

	In this section, we show that attainment of \eqref{eq:main_dual} is equivalent to attainment of \eqref{eq:vp}, and we provide an explicit correspondence between the respective minimizers. We also prove that, whenever \eqref{eq:main_dual} is attained, the solution to \eqref{eq:introP} can be expressed in terms of the solutions to \eqref{eq:vp} and \eqref{eq:main_dual}. In the whole section, we work under the following assumption.    
	\begin{assumption}\label{ass:int_nu_hyperplane}
		Assume:
		\begin{enumerate}
			\item[\textnormal{(H1)}] The measure $\nu$ has finite exponential moments, that is, $$\displaystyle \int e^{q |y| }\,\nu(dy)<\infty \qquad \text{for all } q \in \R_+;$$
			\item[\textnormal{(H2)}] The measure $\nu$ is not concentrated on any hyperplane.
		\end{enumerate}
	\end{assumption}
	
	Observe that Assumption \textnormal{(H2)} is essentially non-restrictive and only serves to avoid artificial dimensionality. More precisely, if $\nu$ is supported on an affine space $A = x_0 + V$, where $V$ is a linear subspace of $\R^d$ with dimension $r<d$, then $\mu$ is also concentrated on $A$. 
	By choosing $r$ minimal, and up to a change of coordinates, all our results apply in this reduced space.

	\begin{remark}
		Before turning to the proofs, we comment on the question of existence and uniqueness of optimizers for the problems \eqref{eq:introP} and \eqref{eq:main_dual}.
		\begin{enumerate}
			\item First, observe that attainment of \eqref{eq:introP} is straightforward. If the value of \eqref{eq:introP} is $+\infty$, then any martingale transport plan trivially attains it. If the value is finite, then existence of a minimizer follows from the lower semicontinuity of the entropy together with the compactness of $\MT(\mu,\nu)$. Moreover, by strict convexity of $H(\cdot |\mu \otimes \nu)$, the minimizer is unique.
			\item Attainment for \eqref{eq:main_dual} is more delicate and will not be addressed in this article. {As elsewhere in martingale transport, one should be weary of irreducibility and furthermore consider a relaxed version of the dual problem where no integrability of the potential is assumed. Presently dual attainment has been obtained by Nutz and Wiesel \cite{NuWi24} in dimension one, and by Carlier, Malamut and Sylvestre \cite{CaMaSy25} in multiple dimensions, under a strengthening of the irreducibility condition}. In an ongoing work, we study Sinkhorn-type algorithms and prove their convergence under a number of assumptions, which in turn ensures the existence of solutions to both the dual problem and the variational problem. 
			
			Regarding uniqueness, first note that the dual functional $\mathcal D$ is invariant by affine shift: for all admissible $\psi$, $$\mathcal D(\psi + a + b \cdot \id) = \mathcal D (\psi),\quad \text{for all } a,b \in \R^d.$$ Up to this invariance, the optimizer of $\mathcal D$ is unique. This follows from the fact that $\D$ is strictly convex up to affine shift: for all $\psi_1,\psi_2$ admissible, $\D((1-t)\psi_1 + t \psi_2) \leq (1-t) \D(\psi_1) + t \D(\psi_2)$, for all $ t \in ]0,1[$, 
			and equality holds if and only if $\psi_2 - \psi_1$ is affine. 
			Uniqueness (modulo pushing forward by translations by a constant) of an optimizer for \eqref{eq:vp}  follows from our uniqueness statement for \eqref{eq:main_dual}, together with the correspondence between optimizers of \eqref{eq:main_dual} and \eqref{eq:vp} established below.
		\end{enumerate}
	\end{remark}

	\subsection{From \eqref{eq:vp} to \eqref{eq:main_dual}} 
	
	In this subsection, we prove that if \eqref{eq:vp} has finite value and is attained, then \eqref{eq:main_dual} too. We provide an explicit expression of the optimizers of \eqref{eq:main_dual} and \eqref{eq:introP} in terms of the optimizer of \eqref{eq:vp}.\footnote{We thank Aram-Alexandre Pooladian for introducing one of us to the expressions in \eqref{eq:diff_SP}.}

	\begin{proposition}\label{pro:vp_to_D}
		Suppose \eqref{eq:vp} has finite value and $\bar\mu\in\PP_2(\Rd)$ solves \eqref{eq:vp}.  Let $\pi$ be the solution to \eqref{eq:SP}, $(\bar\varphi,\psi)$ the (additive) Schrödinger potentials associated to $(\bar \mu,\nu)$, $T :\bar x \mapsto  \int y d\pi_{\bar x}(y)$ the barycentric map, and $m = (T,\id)_\# \pi$.  
		\begin{enumerate}
			\item The map $\bar\varphi$ is finite, strictly concave, infinitely differentiable, and 
			\begin{equation}\label{eq:diff_SP}
				-\nabla \bar\varphi (\bar x) = T(\bar x) \quad \text{and} \quad \nabla^2 \bar\varphi(\bar x) = -Cov_{\pi_{\bar x}}(Y), \qquad \forall \bar x \in \R^d.
			\end{equation}
			Moreover, $T$ is injective and $T_\# \bar \mu = \mu$.
			\item Let $A$ denote the image of $-\nabla \bar\varphi$ and $h : A \to \R^d$ its inverse. Then $$m_x = \pi_{h(x)} \quad \mu(dx)-a.s.$$  
			\item $m$ attains \eqref{eq:introP};
			\item $\psi$ attains \eqref{eq:main_dual}. Moreover, for $\mu$-a.e. $x \in \R^d$, $h(x)$ is the unique solution to the problem \begin{equation*}\label{eq:inner_dual}
				\sup_{h \in \R^d} \{h \cdot x - \log \int e^{\psi(y)+h\cdot y} \nu(dy)\} \tag{$I_x$}
			\end{equation*}
		\end{enumerate}    
	\end{proposition}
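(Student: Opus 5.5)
Parts (1)(a) and (2) follow from the explicit formula for $\bar\varphi$. By the extended \eqref{eq:SchS1}, we have $\bar\varphi(\bar x) = -\log\int e^{\psi(y)+\bar x\cdot y}\nu(dy)$ for all $\bar x$.

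First I will check that this integral is finite for every $\bar x$. Finiteness on $\supp\bar\mu$ is given. I would then like to extend it to all of $\R^d$ using (H1), together with a bound $\psi(y)\le C(1+|y|)$ coming from \eqref{eq:SchS2} and concavity of $\psi$. Concretely, $e^{\psi(y)}\int e^{\bar\varphi(\bar x)+\bar x\cdot y}\bar\mu(d\bar x)=1$ with a log-convex integral gives an affine-type upper bound on $\psi$. Once finiteness holds, dominated convergence and (H1) let me differentiate under the integral. This gives $C^\infty$ regularity and the identities $-\nabla\bar\varphi(\bar x)=\int y\,\pi_{\bar x}(dy)=T(\bar x)$ and $\nabla^2\bar\varphi(\bar x)=-\Cov_{\pi_{\bar x}}(Y)$, where $\pi_{\bar x}$ is the Gibbs kernel, defined for every $\bar x$.

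Strict concavity follows from (H2). Since $\pi_{\bar x}\sim\nu$, it charges no hyperplane, so the covariance matrix is positive definite. Injectivity of $T=-\nabla\bar\varphi$ is then the usual consequence for the gradient of a strictly convex function: $(T(a)-T(b))\cdot(a-b)>0$ for $a\ne b$. Given $T_\#\bar\mu=\mu$ (proved next), part (2) is immediate. The disintegration of $m=(T,\id)_\#\pi$ along $x=T(\bar x)$ is $m_x=\pi_{T^{-1}(x)}=\pi_{h(x)}$ for $\mu$-a.e. $x$. By construction, $\bary(m_x)=T(h(x))=x$, so $m\in\MT(\mu,\nu)$ as soon as $T_\#\bar\mu=\mu$.

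\textbf{Main obstacle: $T_\#\bar\mu=\mu$.} This is the first-order condition for \eqref{eq:vp}. I would perturb $\bar\mu$ by pushing forward along $\bar x\mapsto \bar x+\varepsilon\xi(\bar x)$ with $\xi\in C_c^\infty$, and use optimality at $\varepsilon=0$. By the envelope theorem (dual formulation of \eqref{eq:SP} with $\psi$ frozen), the derivative of the Schrödinger part is $\int\nabla\bar\varphi\cdot\xi\,d\bar\mu$, up to sign conventions. The $\MCov$ part is only superdifferentiable: for an optimal coupling $\kappa$ of $(\bar\mu,\mu)$, $\MCov(\bar\mu_\varepsilon,\mu)\ge \MCov(\bar\mu,\mu)+\varepsilon\int \xi(\bar x)\cdot x\,\kappa$.

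Using both signs of $\varepsilon$ and the one-sided inequalities, I aim to get $\int\xi(\bar x)\cdot(x-T(\bar x))\kappa(d\bar x,dx)=0$ for all $\xi$. This gives $\int x\,\kappa_{\bar x}(dx)=T(\bar x)$. The inequality runs the wrong way for a naive argument, so I would instead use the lower bound for the perturbation of $\MCov$ and the exact (upper, by concavity of the dual) expansion of the entropic part. This should show that $\kappa$ is concentrated on the graph of $T$.

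An alternative route is to take as competitor the measure $\bar\mu'$ obtained by rearranging $\kappa$, comparing as in \eqref{eq:ineq_VP}. Equality in the chain of Lemma \ref{lem:P_VP_eq} forces the kernel convexity inequality to be tight. Strict convexity of entropy then forces $\kappa_{\bar x}$ to be supported where the conditional kernels coincide, i.e. $\kappa$ is induced by a map, namely $T$. I would pursue this second route, since it also yields part (3).

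\textbf{Parts (3) and (4).} With $T_\#\bar\mu=\mu$ and $\kappa=(\id,T)_\#\bar\mu$ induced by a map, the remark after \eqref{eq:ineq_VP} gives equality. Hence value\eqref{eq:introP}$\le\int H(m_x|\nu)\mu(dx)$, which equals the value of the \eqref{eq:vp} objective at $\bar\mu$. That objective value equals value\eqref{eq:vp}, which by Lemma \ref{lem:P_VP_eq} equals value\eqref{eq:introP}. So $m$ is optimal.

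For (4), the function $h\mapsto h\cdot x+\bar\varphi(h)$ is strictly concave with gradient $x-T(h)$, so its unique maximizer is $h=T^{-1}(x)=h(x)$. Plugging this into \eqref{eq:main_dual} gives
$$\int\psi\,d\nu+\int\big(h(x)\cdot x+\bar\varphi(h(x))\big)\mu(dx)=\int\psi\,d\nu+\int(\bar x\cdot T(\bar x)+\bar\varphi(\bar x))\,\bar\mu(d\bar x).$$
Evaluating the Gibbs density, this equals $\int\log\frac{d\pi}{d\bar\mu\otimes\nu}\,d\pi$, which is the value just shown optimal. Lemma \ref{lem:V_D_eq} then shows that $\psi$ attains \eqref{eq:main_dual}. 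Boundedness of $\psi$ is not guaranteed, so I would invoke the relaxed dual, or a truncation argument, to justify admissibility.
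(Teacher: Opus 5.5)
\textbf{Overall.} Your proof is correct, but the key step $T_\#\bar\mu=\mu$ goes by a genuinely different route from the paper's. Your treatment of (1), (2) and (4) is essentially the paper's:
\begin{itemize}
\item an affine upper bound on the concave $\psi$, combined with (H1), for finiteness and for differentiating under the integral;
\item (H2) for positive definiteness of $\Cov_{\pi_{\bar x}}(Y)$;
\item the Gibbs kernel at $h(x)=T^{-1}(x)$ for the dual.
\end{itemize}
Your caveat that $\psi\notin C_b$ in general is fair; the paper does not address it.

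\textbf{How the paper proves $T_\#\bar\mu=\mu$.} The paper argues locally. It takes an $\MCov$-optimal pair $(\bar X,X)$ and perturbs $\bar X\mapsto\bar X+u\,w(\bar X,X)$, where $w$ is a truncation of $X-T(\bar X)$. It bounds the $\MCov$ term from below by keeping the same coupling. It bounds the entropic term from below by freezing $(\bar\varphi,\psi)$ in the dual; the log-partition term vanishes for every base measure, since $e^{\bar\varphi(\bar x)}\int e^{\psi(y)+\bar x\cdot y}\nu(dy)\equiv1$. Smoothness of $\bar\varphi$ then gives a right derivative $\E[\mathbf 1\,|X-T(\bar X)|^2]$, which optimality forces to vanish.

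\textbf{Your first route.} The dependence of the perturbation on $X$ is essential, and this is why your first route would not close. With a deterministic $\xi(\bar x)$, both signs of $\varepsilon$ do give $\int x\,\kappa_{\bar x}(dx)=T(\bar x)$; the inequalities point the right way. But this barycentric identity does not put $\kappa$ on the graph of $T$ when $\bar\mu$ has atoms, e.g.\ in the three-point example of Section~\ref{sec:examples}, where $\bar\mu=h_\#\mu$ is atomic.

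\textbf{Your second route.} This is correct once made precise. No competitor $\bar\mu'$ is needed. Instead, run \eqref{eq:ineq_VP} at $\bar\mu$ itself with $m=m^\star$, the minimizer of \eqref{eq:introP}. It exists because value\eqref{eq:introP}$=$value\eqref{eq:vp}$<\infty$. Set $\pi'(d\bar x,dy)=\bar\mu(d\bar x)\int m^\star_x(dy)\kappa_{\bar x}(dx)$. Equality throughout the chain then gives two things.
\begin{itemize}
\item $\pi'=\pi$, by uniqueness in \eqref{eq:SP}.
\item $m^\star_x=\pi_{\bar x}$ for $\kappa$-a.e.\ $(\bar x,x)$, via the identity $\int H(m^\star_x|\nu)\kappa_{\bar x}(dx)-H(\pi'_{\bar x}|\nu)=\int H(m^\star_x|\pi'_{\bar x})\kappa_{\bar x}(dx)$.
\end{itemize}
It is the martingale property, $x=\bary(m^\star_x)=T(\bar x)$ for $\kappa$-a.e.\ $(\bar x,x)$, that turns ``the kernels coincide'' into ``$\kappa$ is induced by $T$''; you should say so explicitly. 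This yields $T_\#\bar\mu=\mu$ and $m^\star=(T,\id)_\#\pi=m$, i.e.\ part (3) at once.

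\textbf{Comparison.} Your argument needs no regularity of $\bar\varphi$. The price is that it relies on global optimality of $\bar\mu$, primal existence, and Lemma~\ref{lem:P_VP_eq} (hence on weak-transport duality). The paper's argument only needs $\bar\mu$ to beat small perturbations. It is the rigorous form of the first-order condition sketched in the introduction.
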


	\begin{proof}
		\textbf{(1)}  By \eqref{eq:SchS1},
		\[
			\bar\varphi(\bar x)=-\log\int e^{\psi(y)+\bar x\cdot y}\nu(dy),
		\]
		so $\bar\varphi$ is finite. The concavity of $\bar\varphi$ follows directly from the Hölder inequality. Define
		\[
			F(\bar x)=\int e^{\psi(y)+\bar x \cdot y}\nu(dy)
		\]
		and note that $\bar\varphi=-\log(F)$. By Equation \eqref{eq:SchS2},
		\[\psi(y) = -\log \int e^{\bar\varphi(\bar x)+\bar x \cdot y} \bar\mu(d \bar x),\]
		so $\psi$ is concave by the Hölder inequality. Moreover, as $-x \cdot y \in L^1(\mu \otimes \nu)$, by \cite[Theorem 4.2]{Nu22},  the Schrödinger potential $\psi$ is $\nu$-integrable and in particular is not identically equal to $+\infty$. Thus, there exists $a,b \in \R^d$ such that $$\psi(y) \leq a + b\cdot y, \qquad \text{for all } y \in \R^d.$$ Therefore, for all $R>0$, $\bar x \in B(0,R)$, and $y \in \R^d$, it holds
		\begin{equation}\label{eq:bound_diff_SP}
			|e^{\psi(y)+\bar x\cdot y}y| \leq |y| e^{|a|+ |b||y|+ R|y|} \leq e^{|a|+(1+|b|+R)|y|}.
		\end{equation}
		As the r.h.s.\ is $\nu$-integrable by Assumption \eqref{ass:int_nu_hyperplane}, $F$ is differentiable and
		\begin{equation*}
			\nabla F (\bar x) = \int e^{\psi(y)+\bar x \cdot y} y \nu(dy).
		\end{equation*}
		Moreover, we have 
		\begin{equation}
			\frac{d \pi_{\bar x}}{d \nu}(y) = \frac{e^{\psi(y)+\bar x \cdot y}}{\int e^{\psi(\tilde{y})+\bar x \cdot \tilde{y}} \nu(d\tilde{y})} ,
		\end{equation}
		so that
		$$\nabla \log F (\bar x) = \frac{\nabla F (\bar x)}{F(\bar x)}  = \frac{\int e^{\psi(y)+\bar x \cdot y} y \nu(dy)}{\int e^{\psi(\tilde{y})+\bar x \cdot \tilde{y}} \nu(d\tilde{y})} = \int y d \pi_{\bar x}(y) = T(\bar x),$$
		which proves the first equality of Equation \eqref{eq:diff_SP}. To compute the Hessian $\nabla^2 \bar\varphi$ of $\bar\varphi$, fix $(i,j) \in \{1,\dots, n\}^2$ and let $Y = (Y_1,\dots,Y_n)$ denote a random variable with law $\pi_{\bar x}$. By the  dominated convergence theorem, whose application can be justified using a bound similar to \eqref{eq:bound_diff_SP}, we have
		\begin{align*}
			-\partial_{ij} \bar\varphi (\bar x) &= \partial_{ij} \log(F(\bar x))\\
			&= \partial_j \left[ \bar x \mapsto \frac{ \int e^{\psi(y)+\bar x\cdot y} y_i \nu(dy)}{\int e^{\psi(y)+\bar x\cdot y} \nu(dy)} \right] \\
			&= \frac{ \int e^{\psi(y)+\bar x \cdot y} y_iy_j \nu(dy) \int e^{\psi(y)+\bar x \cdot y}\nu(dy) -  \int e^{\psi(y)+\bar x \cdot y} y_i \nu(dy) \int e^{\psi(y)+\bar x \cdot y} y_j \nu(dy)}{\left( \int e^{\psi(y)+\bar x \cdot y} \nu(dy)\right)^2}\\
			&= \frac{\int e^{\psi(y)+\bar x \cdot y} y_iy_j \nu(dy)}{\int e^{\psi(y)+\bar x \cdot y} \nu(dy)}- \frac{\int e^{\psi(y)+\bar x \cdot y} y_i \nu(dy)}{\int e^{\psi(y)+\bar x \cdot y} \nu(dy)} \frac{\int e^{\psi(y)+\bar x \cdot y} y_j \nu(dy)}{\int e^{\psi(y)+\bar x \cdot y}  \nu(dy)}.\\
			&= \E_{\pi_{\bar x}}(Y_iY_j)- \E_{\pi_{\bar x}}(Y_i)\E_{\pi_{\bar x}}(Y_j)\\
			&= Cov_{\pi_{\bar x}}(Y_i,Y_j).
		\end{align*}  
		Therefore $-\nabla^2 \bar\varphi (\bar x) = Cov_{\pi_{\bar x}}(Y)$, which proves the second equality in \eqref{eq:diff_SP}. To prove the strict concavity of $\bar\varphi$, it is now sufficient to prove that $Cov_{\pi_{\bar x}}(Y)$ is positive definite  for all $\bar x \in \R^d$. Assume by contradiction there exists $ x \in \R^d \setminus \{0\}$ such that $ x^\top Cov_{\pi_{\bar x}}(Y)  x = 0$, i.e., $Cov_{\pi_{\bar x}}(x \cdot Y) =0$. Then $x\cdot Y$ would be constant almost surely, that is, there exists $p \in \R$ such that $x \cdot Y = p$. Therefore $\pi_{\bar x}(\{y~;~x\cdot y = p\})=1$ and $\pi_{\bar x}$ would be concentrated on a hyperplane. As $\psi$ is $\nu$-integrable, it is also $\nu$-a.e.\ finite, so that $\frac{d\pi_{\bar x}}{d\nu}(y) \propto e^{\psi(y)+ \bar x \cdot y} \in ]0,+\infty[$ for $\nu$-a.e. $y$, which implies $\pi_{\bar x}$ is equivalent to $\nu$. In particular $\nu$ would charge a hyperplane, which is a contradiction and proves that $Cov_{\pi_{\bar x}}(Y)$ is positive definite and the strict concavity of $\bar\varphi$. As $T = -\nabla \bar\varphi$ and gradient of strictly convex functions are injective, $T$ is injective. We finally prove that $T_\# \bar \mu = \mu$, by using the first order conditions of $\bar{\mu}$.\\
		\textbf{First-order conditions of $\bar{\mu}$:} Let  $(\bar X, X)$ be a coupling maximizing the covariance between $\bar \mu$ and $\mu$, fix $n \geq 1$, and define $$w : (x,\bar x) \mapsto 1_{|x-T(\bar x)|^2 \leq n, |x| \leq n,|\bar x| \leq n} (x-T(\bar x)).$$ For all $u >0$, define $\bar \mu_u := \Law(\bar X + u \cdot w(\bar X,X))$.
		Writing $\mathcal F : \bar \mu \mapsto SP(\bar \mu,\nu) + \MCov(\bar \mu,\mu)$, we show that 
		\begin{equation}\label{eq:rhs_der_F}
			\lim_{u \to 0^+} \frac1u(\mathcal{F}(\bar\mu_u)- \mathcal{F}(\bar \mu) )= \E\left(w(\bar X,X)\cdot (X - T(\bar X))\right).
		\end{equation}
		As $\bar X $ has a second moment and $w$ is bounded, $\bar\mu_u \in \PP_2(\R^d)$ and so $\mathcal{F}(\bar \mu) \geq \mathcal{F}(\bar \mu_u)$. As $w(\bar x, x)(x-T(\bar x)) \geq 0$, we get 
		\begin{equation*}
			\limsup_{u \to 0^+} \frac1u(\mathcal{F}(\bar\mu_u)- \mathcal{F}(\bar \mu) ) \leq 0 \leq \E(w(\bar X, X)\cdot (X-T(\bar X))).
		\end{equation*}
		To prove the $\liminf$ inequality, as $MC(\bar \mu, \mu) = \E(X\cdot \bar X)$, we have
		\begin{equation*}
			\MCov(\bar \mu_u,\mu)- \MCov(\bar \mu,\mu) \geq \E((\bar X + u w(\bar X, X))\cdot X) - \E(\bar X \cdot X) = u\E(w(\bar X,X)\cdot X),
		\end{equation*}
		so
		\begin{equation*}
			\liminf_{u \to 0^+} \frac{\MCov(\bar \mu_u,\mu)-\MCov(\bar \mu,\mu)}{u} \geq \E(w(\bar X,X)\cdot X).
		\end{equation*}
		It is therefore sufficient to show  $$\liminf_{u \to 0^+}  \frac{SP(\bar \mu_u,\nu)-SP(\bar \mu,\nu)}{u} \geq -\E(w(\bar X,X)\cdot T(\bar X)).$$  Using the standard primal/dual equality in entropic optimal transport (see e.g. \cite[Theorem 4.7]{Nu22}) and the identity $\bar\varphi(\bar x)=-\log\int e^{\psi(y) + \bar x \cdot y} \nu(dy)$, equivalently $e^{\bar\varphi(\bar x)}\int e^{\psi(y) + \bar x \cdot y}\nu(dy)=1$, which holds for all $\bar x \in \R^d$, we have
		\begin{align*}
			SP(\bar \mu_u,\nu)- SP(\bar \mu,\nu) &\geq \int \bar\varphi(\bar x) d (\bar \mu_u - \bar \mu)(\bar x) + \int \psi(y) d (\nu - \nu)(y) \\
			&+  \log \int e^{\bar\varphi(\bar x)}\int  e^{ \psi(y) + \bar x \cdot y} \nu(dy) \bar\mu_u(d\bar x) \\
			&- \log \int e^{\bar\varphi(\bar x)}\int  e^{   \psi(y) + \bar x \cdot y} \nu(dy) \bar\mu(d\bar x)\\
			&= \int \bar\varphi(\bar x) d (\bar \mu_u - \bar \mu)(\bar x)\\
			&= \E(\bar\varphi(\bar X+uw(\bar X,X))-\bar\varphi(\bar X)),
		\end{align*}
		so
		\begin{equation*}
			\liminf_{u \to 0^+} \frac{SP(\bar \mu_u,\nu)- SP(\bar \mu,\nu)}{u} \geq \liminf_{u\to 0^+} \E\left( \frac{\bar\varphi(\bar X+uw(\bar X,X))-\bar\varphi(\bar X)}{u}\right).
		\end{equation*}
		Since $\bar\varphi$ is infinitely differentiable, $$  u^{-1}(\bar\varphi(\bar x+uw(\bar x,x))-\bar\varphi(\bar x))= \nabla \bar\varphi (\bar x) \cdot w(x,\bar x) + \frac{u}{2} w(x,\bar x)^\top \nabla^2 \bar\varphi(\bar x) w(x,\bar x) + o(u).$$ As $w$ is supported on a compact set and $\nabla \bar\varphi$ and $\nabla^2 \bar\varphi$ are continuous, the  dominated convergence theorem applies and
		$$ \liminf_{u\to 0^+} \E\left( \frac{\bar\varphi(\bar X+uw(\bar X,X))-\bar\varphi(\bar X)}{u}\right) = \E(\nabla \bar\varphi (\bar X) \cdot w(X,\bar X))= -\E(T(\bar X)\cdot w(X,\bar X)).$$ Therefore \eqref{eq:rhs_der_F} holds, 
		and the above bounds further yield $$\E\left(1_{|X|\leq n, |\bar X | \leq n, |X-T(\bar X)|^2 \leq n}\cdot |X - T(\bar X)|^2\right)=\E\left(w(\bar X,X)\cdot (X - T(\bar X))\right) = 0.$$ By monotone convergence, $\E(|X-T(\bar X)|^2) = 0$, so $X = T(\bar X)$ almost surely and $T_\# \bar \mu = \mu$.
		\newline
		\textbf{(2)} Given $a,b :\R^d \to \R$ continuous bounded, as $m = (T,\id)_\# \pi$ and $T_\# \bar \mu = \mu$, we get 
		\begin{align*}
			\int a(T(\bar x))\int b(y) \pi_{\bar x}(d y) \bar \mu(d\bar x) &= \int a(x) b(y) [(T,\id)_\#\pi](dx,dy)
			= \int a(x) b(y) m(dx,dy)\\
			&= \int a(x) \int b(y) m_x(dy) \mu(dx)
			= \int a(T(\bar x)) \int b(y) m_{T(\bar x)} \bar\mu(d \bar x). 
		\end{align*}
		Using $T_\# \bar \mu = \mu$ again, and the fact that $\bar x = h(T (\bar x))$ for $\bar \mu$-a.e. $\bar x$, it holds
		\begin{align*}
			0 = \int a(T(\bar x)) \int b(y) (m_{T(\bar x)}-\pi_{h(T(\bar x))})(d y) \bar \mu(dx)
			&=  \int a(x) \int b(y) (m_{x}- \pi_{h(x)})(d y)  \mu(dx).
		\end{align*}
		Since this holds for all continuous bounded maps $a$, then 
		$\int b(y) (m_{x}- \pi_{h(x)})(d y)$ ($\mu(dx)-a.s.$),  
		which implies $m_x = \pi_{h(x)}$ for $\mu$-a.e.\ $x$.
		\newline
		\textbf{(3)} As $T$ is the gradient of a convex function and a transport map from $\bar \mu$ to $\mu$, $$\MCov(\bar \mu,\mu) = \int \bar x \cdot T(\bar x) \bar \mu( d \bar x) = \int \bar x \cdot \int y \pi_{\bar x}(dy) \bar \mu(d \bar x) = \int \bar x \cdot y \  \pi(dx,dy).$$ Therefore,
		\begin{align*}
			H(m|\mu \otimes \nu) &= \int H(m_x|\nu) \mu(dx) = \int H(\pi_{h(x)}|\nu) \mu(dx)
			= \int H(\pi_{h(T(\bar x))}|\nu) \bar \mu(d \bar x) 
			= \int H(\pi_{\bar x}|\nu) \bar \mu(d \bar x)\\ &= H(\pi|\bar \mu \otimes \nu)
			= H(\pi|\bar \mu \otimes \nu) - \int \bar x \cdot T(\bar x) \bar \mu( d \bar x) + \MCov (\bar \mu,\mu) = value\eqref{eq:vp} = value\eqref{eq:introP}.
		\end{align*}
		\newline
		\textbf{(4)} Given a probability measure $p \ll \nu $ and a measurable function $f$ such that $\int e^f ~d\nu < + \infty$, the Gibbs variational principle , see \cite[Appendix A, (A.5)]{Le14}),  tells us that 
		$$ H(p\mid \nu) \geq \int f dp - \log  \int e^f d\nu,$$
		with equality if and only if $\frac{dp}{d\nu}(y) = \frac{e^f}{\int e^f d\nu}$. Denoting by $x$ the barycentre of $p$ and applying the previous inequality with $f(y) = \psi(y) + h\cdot y$ for all $h \in \R^d$, we get $$H(p|\nu) - \int \psi dp \geq \sup_{h \in \R^d} \left\{h\cdot x - \log \int e^{h\cdot y + \psi(y)} d\nu(y) \right\},$$ and if there exists $h^* \in \R^d$ such that $\frac{dp}{d\nu}(y) = \frac{e^{h^*\cdot y + \psi(y)}}{\int e^{h^*\cdot \tilde{y}+ \psi(\tilde{y})} d\nu(\tilde{y})}$, the inequality becomes an equality and the supremum is attained for $h=h^*$. Now, according to the second point of this lemma, for $\mu$-a.e. $x \in \R^d$, $$\frac{d m_x}{d\nu}(y) = \frac{d \pi_{h(x)}}{d\nu}(y) = \frac{e^{\psi(y) + h(x)\cdot y}}{\int e^{\psi( \tilde y) + h(x) \cdot \tilde y  } \nu(d \tilde{y})}.$$  By the above argument, for $\mu$-a.e. $x \in \R^d$,	$$H(m_x|\nu) - \int \psi(y) dm_x(y) = \sup_{h \in \R^d} \left\{ h \cdot x - \log\int e^{\psi(y)+h \cdot y} \nu(dy)\right\}.$$ Integrating along $\mu$ and using $m \in \MT(\mu,\nu)$, we get
		\begin{align*}
			value\eqref{eq:main_dual} &= value\eqref{eq:introP} = \int H(m_x|\nu) \mu(dx)\\ &= \int \psi(y) d\nu(y) + \int \sup_{h \in \R^d} \left\{ h \cdot x - \log\int e^{\psi(y)+h \cdot y} \nu(dy)\right\}d \mu(x).
		\end{align*}
		Fix $x \in A = T(\R^d)$. For all $x \in A$, it holds $T(h(x)) = x$. As  $-\bar\varphi$ is convex and differentiable, $\tilde h$ solves \eqref{eq:inner_dual} if and only if $x \in \partial (-\bar\varphi)(\tilde h) $ if and only if $x = -\nabla \bar\varphi(\tilde h) = T(\tilde h) $ if and only if  $\tilde h = h(x)$. 
	\end{proof}

	\subsection{From \eqref{eq:main_dual} to \eqref{eq:vp}}
	
	In this subsection, we prove that if \eqref{eq:main_dual} has finite value and is attained, then \eqref{eq:vp} also is. In doing so, we provide an explicit expression of the optimizers of \eqref{eq:vp} and \eqref{eq:introP} in terms of the optimizer of \eqref{eq:main_dual}. In addition to Assumption \eqref{ass:int_nu_hyperplane}, we assume the following.
	
	\begin{assumption}\label{ass:exp_affine_unique}
		Assume:
		\begin{enumerate}
			\item[\textnormal{(H3)}] The problem $\eqref{eq:main_dual}$ has finite value and is attained by a $\nu$-integrable map $\psi$ such that there exists $(a,b) \in \R_+^2$ with $|\psi(y)| \le a+b|y|$ for $\nu$-a.e.\ $y$;
			\item[\textnormal{(H4)}]\label{H4} For $\mu$-a.e. $x$, the problem \ref{eq:inner_dual}	admits a unique solution, denoted $h(x)$.
		\end{enumerate}
	\end{assumption}
	
	\begin{remark}
		These two assumptions on \(\psi\) and on \eqref{eq:inner_dual}, provide \emph{necessary and sufficient} conditions for the attainment of \eqref{eq:vp} by $\bar \mu \in \PP_2(\R^d)$. Indeed, we saw in the proof of Proposition \ref{pro:vp_to_D} that if \eqref{eq:vp} is attained by such $\bar \mu$, then \eqref{eq:main_dual} is attained by the second Schrödinger potential with respect to $(\bar \mu,\nu)$, and we proved that this potential satisfies the bound in $(H_3)$. Moreover, we saw in the proof that \eqref{eq:inner_dual} has a unique solution given by $(-\nabla \bar\varphi)^{-1} (x)$ where $\bar\varphi$ is the first Schrödinger potential with respect to $(\bar \mu,\nu)$. So these conditions are necessary to prove attainment of \eqref{eq:vp}. 
	\end{remark}
	
	To derive the first order conditions associated to the optimality of $\psi$, one needs to be able  to differentiate maps defined as supremum. This will require the following version of the Danskin lemma, which follows directly from \cite[Corollary~4]{MiSe02}.
	
	\begin{lemma}[Danskin Lemma]\label{lem:danskin_local}
		Consider a continuous map $\Phi:\R\times \R^d \to\R$  such that:
		
		\begin{enumerate}
			\item\label{point:Danskin_loc}  There exist $\epsilon_0> 0$ and a compact set $K \subset \R^d$ such that, for all $\epsilon \in (-\epsilon_0,\epsilon_0)$,
			\[
			\sup_{h\in \R^d}\Phi(\epsilon,h)=\sup_{h\in K}\Phi(\epsilon,h);
			\]
			\item\label{point:Danskin_diff} For all $h\in K$, the map $\epsilon\mapsto \Phi(\epsilon,h)$ is differentiable on $(-\epsilon_0,\epsilon_0)$;
			\item\label{point:Danskin_C_1} The map $(\epsilon,h)\mapsto \partial_\epsilon\Phi(\epsilon,h)$ is continuous on $(-\epsilon_0,\epsilon_0) \times K$;
			\item\label{point:Danskin_uniqueness} $\Phi(0,\cdot)$ admits a unique maximizer $h_0$ on $K$.
		\end{enumerate}
		Then, the map $V(\epsilon):=\sup_{h\in \R^d}\Phi(\epsilon,h)$ is differentiable at $0$, and	$V'(0)=\partial_\epsilon\Phi(0,h_0).$
	\end{lemma}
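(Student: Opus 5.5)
The plan is a direct two-sided comparison argument, without going through \cite{MiSe02}. By assumption \eqref{point:Danskin_loc}, for $|\epsilon|<\epsilon_0$ we have $V(\epsilon)=\sup_{h\in K}\Phi(\epsilon,h)$. Since $K$ is compact and $\Phi(\epsilon,\cdot)$ is continuous, this supremum is a maximum, attained at some $h_\epsilon\in K$ (chosen arbitrarily if not unique). We take $h_{0}$ to be the unique maximizer from \eqref{point:Danskin_uniqueness}. The key point is that every quantity below is a difference $\Phi(\epsilon,h)-\Phi(0,h)$ with a fixed $h\in K$. Hypothesis \eqref{point:Danskin_diff} lets us apply the one-dimensional mean value theorem to such a difference.

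\emph{Lower comparison.} Since $h_0$ is admissible at level $\epsilon$ and optimal at level $0$,
\[
V(\epsilon)-V(0)\ \ge\ \Phi(\epsilon,h_0)-\Phi(0,h_0)=\epsilon\,\partial_\epsilon\Phi(\xi'_\epsilon,h_0)
\]
for some $\xi'_\epsilon$ between $0$ and $\epsilon$.

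\emph{Upper comparison.} Since $h_\epsilon$ is optimal at level $\epsilon$ and admissible at level $0$,
\[
V(\epsilon)-V(0)\ \le\ \Phi(\epsilon,h_\epsilon)-\Phi(0,h_\epsilon)=\epsilon\,\partial_\epsilon\Phi(\xi_\epsilon,h_\epsilon)
\]
for some $\xi_\epsilon$ between $0$ and $\epsilon$.

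Dividing by $\epsilon$, the difference quotient $(V(\epsilon)-V(0))/\epsilon$ lies between $\partial_\epsilon\Phi(\xi'_\epsilon,h_0)$ and $\partial_\epsilon\Phi(\xi_\epsilon,h_\epsilon)$. The order of these two bounds depends on the sign of $\epsilon$, but that does not matter once we show both converge to $\partial_\epsilon\Phi(0,h_0)$. For the first bound this is immediate from the continuity in \eqref{point:Danskin_C_1}, because $\xi'_\epsilon\to 0$.

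The main obstacle is the second bound: we need $h_\epsilon\to h_0$ as $\epsilon\to0$, after which joint continuity of $\partial_\epsilon\Phi$ on $(-\epsilon_0,\epsilon_0)\times K$ gives $\partial_\epsilon\Phi(\xi_\epsilon,h_\epsilon)\to\partial_\epsilon\Phi(0,h_0)$. I would argue by compactness and uniqueness. Take any sequence $\epsilon_n\to0$. By compactness of $K$, some subsequence satisfies $h_{\epsilon_n}\to h^*\in K$. For every $h\in K$ we have $\Phi(\epsilon_n,h_{\epsilon_n})\ge\Phi(\epsilon_n,h)$, and continuity of $\Phi$ lets us pass to the limit to get $\Phi(0,h^*)\ge\Phi(0,h)$. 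Thus $h^*$ maximizes $\Phi(0,\cdot)$ on $K$, and \eqref{point:Danskin_uniqueness} forces $h^*=h_0$. Since every sequence has a subsequence converging to $h_0$, we conclude $h_\epsilon\to h_0$.

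Combining the two comparisons then shows that $V$ is differentiable at $0$ with $V'(0)=\partial_\epsilon\Phi(0,h_0)$.
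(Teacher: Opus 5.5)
Your proof is correct, but it takes a different route from the paper. The paper gives no argument of its own: it derives the lemma directly from the envelope theorem of Milgrom and Segal \cite[Corollary~4]{MiSe02}.

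You instead run the classical Danskin argument by hand. The mean value theorem, applied to $\Phi(\cdot,h_0)$ and to $\Phi(\cdot,h_\epsilon)$, traps the quotient $(V(\epsilon)-V(0))/\epsilon$ between $\partial_\epsilon\Phi(\xi'_\epsilon,h_0)$ and $\partial_\epsilon\Phi(\xi_\epsilon,h_\epsilon)$. Compactness of $K$, continuity of $\Phi$ and uniqueness of the maximizer then give $h_\epsilon\to h_0$, i.e.\ upper hemicontinuity of the argmax.

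The details hold up:
\begin{itemize}
\item hypothesis~(1) at $\epsilon=0$ gives $V(0)=\Phi(0,h_0)$;
\item the segment between $0$ and $\epsilon$ lies in $(-\epsilon_0,\epsilon_0)$, so the mean value theorem is available for every $h\in K$;
\item the order of the two bounds depends on the sign of $\epsilon$, but this is harmless because both converge to $\partial_\epsilon\Phi(0,h_0)$.
\end{itemize}

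The citation is shorter and places the lemma in a general framework that also covers non-unique maximizers, via one-sided derivatives of $V$. Its cost is that one must check the localized setting here (supremum over $\R^d$ reduced to $K$ only for $|\epsilon|<\epsilon_0$) against the hypotheses of the cited result. Your argument is self-contained and shows exactly where each hypothesis is used. It also shows that~(3) can be weakened to joint continuity of $\partial_\epsilon\Phi$ at the single point $(0,h_0)$, since both bounds are evaluated along points $(\xi'_\epsilon,h_0)$ and $(\xi_\epsilon,h_\epsilon)$ converging to $(0,h_0)$.
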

	
	Given a set $C \subset \R^d$, we denote by $\conv(C)$, $\aff(C)$ and $\textnormal{ri}(C)$ its convex hull, its affine hull and its relative interior, respectively. Given a map $f$, we denote by $\dom(f)$ its domain, by $f^*$ its Fenchel-Legendre transform, and by $\partial f (x)$ its sub-differential in $x$. 
	
	\begin{lemma}\label{lem:application_Danskin}
		Let $(x ,h ,\epsilon) \in \R^d \times \R^d \times \R$, and consider a continuous map $\eta: \R^d \to \R$ with compact support. Define
		\[
		Z_\epsilon(h)
		=
		\int_{\R^d}\exp\big(\psi(y)+h\cdot y+\epsilon\,\eta(y)\big)\,\nu(dy),
		\qquad
		\Phi_x(\epsilon,h)
		=
		h\cdot x-\log Z_\epsilon(h).
		\]
		and 
		\[
		V_x(\epsilon)
		=
		\sup_{h\in\R^d}\Phi_x(\epsilon,h).
		\]
		Then the following assertion are satisfied.
		\begin{enumerate}
			\item The map  $\Phi_x$ is finite and continuous.
			\item\label{point:formul_diff} For all $h\in\R^d$, the map
			$\epsilon\mapsto \Phi_x(\epsilon,h)$ is differentiable on $\R$, and, for every $(\epsilon,h) \in \R \times \R^d$,
			\[
			\partial_\epsilon\Phi_x(\epsilon,h)
			=
			-\int \eta(y)\,m^{\epsilon,h}(dy),
			\]
			where
			\[
			m^{\epsilon,h}(dy)
			:=
			\frac{e^{\psi(y)+h\cdot y+\epsilon \eta(y)}}
			{\int e^{\psi(\tilde y)+h\cdot \tilde y+\epsilon \eta(\tilde y)}\,\nu(d \tilde y)}\,\nu(dy).
			\] 
			\item The map
			$(\epsilon, h) \mapsto \partial_\epsilon\Phi_x(\epsilon,h)$ is continuous on $\R\times\R^d$.
			\item
			Set $\Gamma = \Phi_x(0,\cdot)$ and $m^{0,h}=m^h.$
			\begin{enumerate}
				\item\label{point:Z_0_diff} The map $\Gamma$ is differentiable and for all $h$, $\nabla \Gamma (h) =  \int y m^h(dy)$;
				\item\label{point:gradient_in_rel_int} For all $h \in \R^d$, $\nabla \Gamma(h)\in \mathrm{ri}\big(\dom  \Gamma^*\big)$;
				\item\label{point:coercitivity_on_affine_span} If $x\in \mathrm{ri}(\dom \Gamma^*)$, then  $\Gamma$ is coercive along $V=\aff(\dom \Gamma^*)-x$, i.e.,
				$$\lim_{ h\in V, |h| \to +\infty} \Gamma(h) = -\infty;$$
				\item\label{point:affine_is_full} $\aff(\dom \Gamma^*) = \R^d$;
				\item\label{point:full_coercitvity} If \eqref{eq:inner_dual} admits a unique solution, then the map $\Gamma$ is coercive,  i.e., $$\lim_{|h| \to +\infty} \Gamma(h) = -\infty.$$
			\end{enumerate}
			
			\item $V_x$ is differentiable at $\epsilon=0$ and
			\[
			\left.\frac{d}{d\epsilon}\right|_{\epsilon=0}
			V_x(\epsilon)
			=
			-\int \eta(y)\,m^{h(x)}(dy).
			\]
		\end{enumerate}
	\end{lemma}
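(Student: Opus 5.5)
The plan is to work throughout under (H1)–(H4), with $\psi$ the dual optimizer from (H3), and to verify the items in order, since each feeds the next.

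\textbf{Items (1)–(3).} By (H3), $\psi(y)\le a+b|y|$ $\nu$-a.e. Since $\eta$ is bounded,
\[
e^{\psi(y)+h\cdot y+\epsilon\eta(y)}\le e^{a+\|\eta\|_\infty|\epsilon|+(b+|h|)|y|}.
\]
By (H1) the right-hand side is $\nu$-integrable, locally uniformly in $(\epsilon,h)$, and the same holds after multiplying by $|y|$ or $|\eta|$. Also $Z_\epsilon(h)>0$, because $\psi$ is $\nu$-a.e. finite. Dominated convergence therefore gives finiteness, continuity, differentiability in $\epsilon$ with the stated formula, and joint continuity of $\partial_\epsilon\Phi_x$ as a ratio of continuous positive integrals. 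This is the same computation as in the proof of Proposition \ref{pro:vp_to_D}.

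\textbf{Item (4).} Part (a) is identical: $\nabla\Gamma(h)=x-\nabla\log Z_0(h)$. (Presumably the statement intends $\Gamma$ up to the linear term, or $\nabla\Gamma(h)=x-\int y\,m^h(dy)$; I would work with the convex function $G:=\log Z_0$, so that $\Gamma(h)=h\cdot x-G(h)$ and $\nabla G(h)=\int y\,m^h(dy)$.)

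For (b) and (d) I would identify $\dom G^*$. One has $\conv\supp\nu\supseteq\dom G^*\supseteq\ri\conv\supp\nu$. Moreover $\nabla G(h)$ is the barycenter of $m^h\sim\nu$, and the barycenter of a measure equivalent to $\nu$ lies in $\ri\conv\supp\nu$. By (H2), $\aff\supp\nu=\R^d$, which gives both (b) and (d).

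For (c) (and hence (e)), I would use a standard convex-analysis argument. If $x\in\operatorname{int}\dom G^*$, then $G(h)-h\cdot x\ge \delta|h|-C$: take a small ball $B(x,\delta)\subset\dom G^*$ and use $G(h)\ge h\cdot z-G^*(z)$ with $z=x+\delta h/|h|$, noting that $G^*$ is bounded on a slightly smaller compact ball since it is convex and finite there. Hence $\Gamma\to-\infty$.

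For (e), I would argue that uniqueness of the maximizer of $\Gamma$ forces $x\in\operatorname{int}\dom G^*$. A maximizer $h$ satisfies $x=\nabla G(h)\in\ri\dom G^*=\operatorname{int}\dom G^*$ by (b) and (d). The existence of a maximizer is guaranteed by (H4), so coercivity follows from (c).

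\textbf{Item (5).} I would apply Lemma \ref{lem:danskin_local} to $\Phi_x$ with $h_0=h(x)$. Conditions (2) and (3) come from the items above, and (4) comes from (H4), with $x$ in the $\mu$-full set of (H4).

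The main obstacle is condition (1): localizing the supremum on a common compact $K$ for all small $|\epsilon|$. I would use the bound
\[
|\Phi_x(\epsilon,h)-\Gamma(h)|\le|\epsilon|\,\|\eta\|_\infty,
\]
which holds since $Z_\epsilon/Z_0\in[e^{-|\epsilon|\|\eta\|_\infty},e^{|\epsilon|\|\eta\|_\infty}]$. By coercivity (4e), choose $R$ such that $\Gamma(h)<\Gamma(h(x))-3$ for $|h|>R$. Then for $|\epsilon|\|\eta\|_\infty<1$ and $|h|>R$,
\[
\Phi_x(\epsilon,h)<\Gamma(h(x))-2<\Phi_x(\epsilon,h(x)),
\]
so the supremum is attained in $K=\overline{B(0,R)}$. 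Danskin's lemma then yields
\[
V_x'(0)=\partial_\epsilon\Phi_x(0,h(x))=-\int\eta\,dm^{h(x)}.
\]
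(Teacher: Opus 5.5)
Your proposal is correct. For items (1)--(3), (4c), (4e) and (5) it follows the paper's proof: the same domination via (H1) and (H3), the same bound $h\cdot x-G(h)\le -r|h|+\sup_{|z-x|=r}G^*(z)$, and the same localization of the supremum via $|\Phi_x(\epsilon,h)-\Phi_x(0,h)|\le|\epsilon|\,\|\eta\|_\infty$ plus coercivity before applying Lemma \ref{lem:danskin_local}. Your reading of the notation is also how the paper's proof actually uses it: $\Gamma$ means $\log Z_0$ in (a), (b), (d), and coercivity is proved for $\Phi_x(0,\cdot)$. Your constants in (5) are in fact cleaner than the paper's, whose superlevel set should sit at level $\Phi_x(0,h_0)-2\|\eta\|_\infty$.

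The genuine difference is in (4b) and (4d). You go through the convex support of $\nu$, using two facts: barycentres of measures equivalent to $\nu$ lie in $\operatorname{int}\conv\supp\nu$, and $\operatorname{int}\conv\supp\nu\subseteq\dom G^*$. These alone give (b) and (d); the upper inclusion $\dom G^*\subseteq\conv\supp\nu$ is not needed. The paper never identifies $\dom\Gamma^*$. It takes a supporting hyperplane of $\dom\Gamma^*$ itself at $\nabla\Gamma(h)$ and uses Fenchel--Young to keep $\nabla\Gamma(h+tu)$ in $\dom\Gamma^*$. From monotonicity of $t\mapsto u\cdot\nabla\Gamma(h+tu)$ it deduces that $t\mapsto\log\int e^{tu\cdot y}\,m^h(dy)$ is linear, and it concludes with the equality case of Jensen; (d) is handled the same way.

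Your route is more conceptual and shows additionally that, under (H1)--(H3), \eqref{eq:inner_dual} has a (necessarily unique) maximizer exactly when $x\in\operatorname{int}\conv\supp\nu$. However, the inclusion $\operatorname{int}\conv\supp\nu\subseteq\dom G^*$, which you state without proof, is itself a coercivity statement. It needs either a citation to exponential-family theory or an argument: a lower bound on the mass of $\{u\cdot(y-z)>\delta\}$ that is uniform over unit vectors $u$, obtained by compactness of the sphere. The paper's argument is self-contained and sidesteps this step.
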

	
	\begin{proof}
		\textbf{(1)} As $\psi$ is $\nu$-integrable, it has finite value $\nu(dy)$-a.e., so $Z_\epsilon(h)$ is positive. By Assumption \ref{ass:exp_affine_unique}, $$|\psi(y)+h\cdot y+\epsilon\,\eta(y)| \leq (a + \epsilon |\eta|_\infty) + (b+ |h|)|y|,$$ and the r.h.s. is $\nu$-integrable, which implies $Z_\epsilon(h) < +\infty$. This proves $\Phi_x$ is finite. Continuity follows by the dominated convergence theorem, which application is justified by the above bound and Assumption \ref{ass:exp_affine_unique}.\\
		\textbf{(2)} 
		Fix $h \in \R^d$ and $\epsilon_0 \in \R$. For  $|\epsilon - \epsilon_0| \leq 1$, we have by Assumption \ref{ass:exp_affine_unique} $$|\eta(y)| e^{\psi(y)+h\cdot y+\epsilon\,\eta(y)} \leq |\eta|_\infty   e^{a +(|\epsilon_0| + 1) |\eta|_\infty + (b+ |h|)|y|}.$$ By Assumption \ref{ass:int_nu_hyperplane} the r.h.s. is $\nu$-integrable, and we conclude by the dominated convergence theorem and by the arbitrarity of $\epsilon_0$.\\
		\textbf{(3)} Let $(\epsilon_n,h_n) \to (\epsilon,h)$.
		For all $M,\delta>0$ s.t. $h_n,h \in B(0,M)$ and $\epsilon_n,\epsilon \in B(0, \delta)$, it holds
		\[|\eta(y)|e^{\psi(y)+h_n \cdot y + \epsilon_n \eta(y)} \leq |\eta|_\infty e^{a + \delta|\eta|_\infty+  (b+ M)|y|} \; \; \text{and}\; \; e^{\psi(y)+h_n \cdot y+\epsilon_n \eta(y)} \leq  e^{a +\delta|\eta|_\infty+  (b+ M)|y|}.\] As both r.h.s. are $\nu$-integrable, the dominated convergence theorem can be applied to prove that the numerator and the denominator appearing in the expression of $\partial_\epsilon \Phi_x$ are continuous, which concludes.\\
		\textbf{(4)} We successively prove the five points. Set $\tilde{\nu} = e^{\psi}\cdot \nu$.
		\begin{enumerate}
			\item[(a)] According to Assumption \ref{ass:exp_affine_unique}, for any given $M>0$ and $h \in B(0,M)$,
			\begin{equation*}
				|e^{h \cdot y + \psi(y)}y| \leq |y| e^{M|y| + a|y|+b} \leq e^{(1+a+M)|y| + b},
			\end{equation*}
			and the r.h.s. is $\nu$-integrable. By the dominated convergence theorem, the map $J: h \mapsto \int e^{y \cdot h + \psi(y)} \nu(dy)$ is differentiable and $$ \nabla J (h) =  \int e^{y \cdot h + \psi(y)} y\nu(dy).$$ Therefore $\Gamma$ is differentiable and $$\nabla \Gamma(h) = \frac{\int e^{y \cdot h + \psi(y)} y\nu(dy)}{\int e^{y \cdot h + \psi(y)} \nu(dy)} = \int y m^h(dy).$$
			\item[(b)] Fix $h\in\R^d$ and set $x=\nabla\Gamma(h)$. As moment generating functions are convex $\Gamma$ is convex, so it holds
			\[
			\Gamma^*(x)=\sup_{y\in\R^d}\{y\cdot x-\Gamma(y)\} = h\cdot x-\Gamma(h)<\infty,
			\]
			and therefore $x\in\dom\Gamma^*$. Assume by contradiction
			$x\notin \mathrm{ri}(\dom\Gamma^*)$. As $\dom\Gamma^*$ is convex, we know (see e.g. \cite[Lemma 4.2.1]{HiLe93}) that there exists a supporting
			hyperplane at $x$, that is, there exists $u\in\R^d\setminus\{0\}$ and $\alpha\in\R$ such that
			\begin{equation}\label{eq:support}
				u\cdot x=\alpha \qquad \text{and} \qquad u\cdot z\le \alpha \quad\text{for all } z \in\dom\Gamma^*.
			\end{equation}
			Define $g : t \in \R \mapsto \Gamma(h+t u).$ Since \(\Gamma\) is convex and differentiable, \(g\) is convex and differentiable, and by the chain rule $	g'(t)=u\cdot \nabla \Gamma(h+t u).$ By Fenchel--Young equality,
			\[
			\Gamma^*(\nabla\Gamma(h+t u))
			=(h+t u)\cdot \nabla\Gamma(h+t u)-\Gamma(h+t u)<\infty, \quad \text{for all } t \in \R.
			\]
			Therefore \(\nabla\Gamma(h+t u)\) belongs to \(\dom\Gamma^*\) and by Equation \eqref{eq:support},
			\[
			g'(t)=u\cdot \nabla\Gamma(h+t u)\le \alpha
			\qquad\text{for all } t\in\R.
			\]
			As $g'(0)=u\cdot \nabla\Gamma(h)=u\cdot x=\alpha$ and \(g'\) is nondecreasing by convexity of $g$, $g'$ is constant equal to $\alpha$ on $[0,+\infty[$.
			Thus, for all $t \geq 0$
			\[
			\log\int e^{t\,u\cdot y}\,m^h(dy) = \Gamma(h+t u)-\Gamma(h)= \int^t_0 g'(t) dt = \alpha t,
			\]
			which rewrites
			\[
			\int e^{t(u\cdot y-\alpha)}\,m^h(dy)=1
			\qquad\text{for all } t\ge 0.
			\]
			Let \(Y\sim m^h\) and define \(X=u\cdot Y-\alpha\). By the previous point,
			$$\E[X]	=u\cdot \int y\,m^h(dy)-\alpha = u\cdot x-\alpha =0,$$
			and the previous identity reads $\E[e^{tX}]=e^{t\E[X]}.$ In particular $\E[e^{X}]=e^{\E[X]}$. 	Since the exponential map is strictly convex, equality in Jensen's inequality implies that \(X\) is almost surely constant. Since \(\E[X]=0\), we conclude that \(X=0\) almost surely, that is,	$u\cdot y=\alpha$ for $m^h$-a.e. $y$. Since \(m^h\sim \tilde\nu \sim\), it follows that $u\cdot y=\alpha$ for $\nu$-a.e.$y$. This contradicts the fact that \(\nu\) is not supported on any affine hyperplane. Therefore $	x\in \ri(\dom\Gamma^*).$
			\item[(c)] Since $x\in \mathrm{ri}(\dom\Gamma^*)$, there exists $r>0$ such that
			\begin{equation}\label{eq:ri_ball_affine}
				x+ [B(0,r)\cap V] \subset \mathrm{ri}(\dom\Gamma^*).
			\end{equation}
			Define $S_V=\{u\in V:\ |u|=1\}.$
			By \eqref{eq:ri_ball_affine}, the compact set $K :=\{x+ru~;~ u\in S_V\}$ is a subset of $\mathrm{ri}(\dom\Gamma^*)$. Since $\Gamma^*$ is a convex function, it is continuous on $\mathrm{ri}(\dom\Gamma^*)$ (see \cite[Remark 3.1.3]{HiLe93}), and thus bounded above on every
			compact subset of $\mathrm{ri}(\dom\Gamma^*)$. Hence $
			M:=\sup_{z\in K}\Gamma^*(z)< +\infty$. Now, fix $h\in V \setminus\{0\}$, and write $h=t u$ with $t=|h|$ and $u=h/|h|\in S_V$.	By definition, we have
			\[
			\Gamma^*(x+ru)\ge (x+ru)\cdot(tu)-\Gamma(tu)=t(x\cdot u+r)-\Gamma(tu).
			\]
			Rearranging yields
			$
			\Gamma(tu)\ge t(x\cdot u+r)-\Gamma^*(x+ru)
			$ 
			and therefore
			\[
			\Phi_x(0,h)=t\,x\cdot u-\Gamma(tu)\le -rt+\Gamma^*(x+ru)\le -r|h|+M.
			\]
			Thus $\lim_{h \in V, |h | \to \infty} \Phi_x(0,h) = -\infty$, which is the
			claimed coercivity along $V$.
			\item[(d)] Assume by contradiction that $\aff(\dom \Gamma^*)\neq \R^d$, meaning that there exist
			$u\in \R^d\setminus\{0\}$ and $\alpha\in\R$ such that
			$u\cdot z=\alpha$ for all  $z\in \dom \Gamma^*.$ Now define
			\[
			\lambda(t)=\Gamma(tu)=\log \int e^{t\,u\cdot y}\,\tilde\nu(dy), \qquad t \in \R.
			\]
			Since \(\Gamma\) is differentiable, \(\lambda\) is too, and as $\Gamma(\R^d) \subset \ri(\dom \Gamma^*) \subset \dom \Gamma^*$,
			\[
			\lambda'(t)=u\cdot \nabla\Gamma(tu)=\alpha \qquad \text{for all } t\in\R.
			\]
			Therefore \(\lambda(t)=\alpha t+\beta\) for some \(\beta\in\R\), i.e.
			\[
			\int e^{t\,u\cdot y}\,\tilde\nu(dy)=e^\beta e^{\alpha t}\qquad \text{for all } t\in\R.
			\]
			Evaluating at \(t=0\), we get \(e^\beta= \tilde \nu(\R) \), so
			\[
			\int e^{t(u\cdot y-\alpha)}\,\left[\frac{\tilde \nu}{\tilde \nu(\R)}\right](dy)= 1\qquad \text{for all } t\in\R.
			\]
			Reasoning as in the end of Point \ref{point:gradient_in_rel_int}, this implies that $\tilde \nu$ and therefore $\nu$ is concentrated on a hyperplane. This contradicts Assumption \ref{ass:int_nu_hyperplane} and concludes.
			\item[(e)] As $h(x)$ is the unique solution of \eqref{eq:inner_dual}, $x = \nabla \Gamma (h(x))$. Thus, by Point \ref{point:gradient_in_rel_int}, $x \in \textnormal{ri}(\textnormal{dom} \Gamma^*)$. Then, by Point \ref{point:coercitivity_on_affine_span}, $\Phi_x(0,\cdot)$ is coercive on $\aff(\dom\Gamma^*)-x$, and by Point \ref{point:affine_is_full} $\aff(\dom\Gamma^*)-x = \R^d$. 
		\end{enumerate}
		
		\textbf{(5)} For all $\epsilon > 0$ and $h \in \R^d$, we have 
		$ e^{-\epsilon |\eta|_\infty}Z_0(h) \leq Z_\epsilon(h) \leq e^{\epsilon |\eta|_\infty}Z_0(h)$. Therefore, $|\Phi_x(\epsilon,h)-\Phi_x(0,h)|
		\le
		|\eta|_\infty\,|\epsilon|$. Thus, for all $|\epsilon| \leq 1$ and $h$ such that $\Phi_x(\epsilon,h) \geq \Phi_x(\epsilon,h_0)$ it holds 
		\begin{align*}
			\Phi_x(\epsilon,h)
			\ge \Phi_x(\epsilon,h_0)
			\ge \Phi_x(0,h_0)-|\eta|_\infty,
		\end{align*}
		that is, $h$ belongs to $K:=\{\Phi_x(0,\cdot) \geq M\}$, with $M := \Phi_x(0,h_0) -  | \eta |_\infty$. We get,  $$\sup_{h\in \R^d}\Phi(\epsilon,h)= \sup_{h\in \R^d ; \Phi(\epsilon,h) \geq \Phi(\epsilon,h_0) }  \Phi(\epsilon,h) = \sup_{h\in K}\Phi(\epsilon,h).$$ By coercivity of $\Phi_x(0,\cdot)$, the set $K$  is bounded. Since  $\Phi_x(0,\cdot)$ is continuous $K$ is in fact compact. 
		
		We now verify the assumptions of Lemma \ref{lem:danskin_local} for $\Phi = \Phi_x$: the compact reduction follows from the above coercivity argument, the second and thirds points of the present lemma give differentiability in $\epsilon$ and joint continuity of $\partial_{\epsilon}\Phi_x$ on  $(-\epsilon_0,\epsilon_0)\times K$, finally, Assumption \eqref{ass:exp_affine_unique} gives uniqueness of the maximizer with $h_0 = h(x)$. Therefore, Lemma \ref{lem:danskin_local} yields $V_x'(0)=\partial_\epsilon \Phi_x(0,h(x)) = - \int \eta(y) m^{h(x)}(dy)$.
	\end{proof}
	
	We now prove the main result of this subsection.

	\begin{proposition}\label{pro:from_D_to_VP}
		Define 
		\begin{enumerate}
			\item $\bar \mu = h_\# \mu$;
			\item  $\bar\varphi(\bar x) = -\log \int e^{\psi(y)+\bar x \cdot y} \nu(dy)$;
			\item $\frac{dm_x}{d\nu}(y) = \frac{e^{h(x)\cdot y + \psi(y)}}{\int e^{h(x)\cdot y + \psi(y)} \nu(dy)}$ and $m(dx,dy) = \mu(dx)m_x(dy)$;
			\item  $\frac{d \pi}{d \bar \mu \otimes  \nu}(\bar x,y) =  \exp\left( \bar\varphi (\bar x) + \psi(y) + \bar x \cdot y\right)$.
		\end{enumerate}
		Then:
		\begin{enumerate}
			\item $\bar \mu$ solves \eqref{eq:vp}, $\pi$ solves $\eqref{eq:SP}$, and $(\bar\varphi,\psi)$ are the corresponding Schrödinger potential associated to $(\bar \mu,\nu)$;
			\item $m$ solves \eqref{eq:introP}.
		\end{enumerate}
	\end{proposition}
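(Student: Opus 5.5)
The plan has three parts: get the first-order condition of \eqref{eq:main_dual} at $\psi$ by perturbing it with compactly supported test functions, read off from it that $m\in\MT(\mu,\nu)$ and that $(\bar\varphi,\psi)$ solves the Schrödinger system for $(\bar\mu,\nu)$, and then close with value comparisons using Lemmas \ref{lem:V_D_eq} and \ref{lem:P_VP_eq}.

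\textbf{Step 1: first-order condition.} Fix a continuous compactly supported $\eta$ and consider
\[
\D(\psi+\epsilon\eta)=\int(\psi+\epsilon\eta)\,d\nu+\int V_x(\epsilon)\,\mu(dx).
\]
Optimality of $\psi$ (by (H3)) gives $\frac{d}{d\epsilon}\big|_{\epsilon=0}\D(\psi+\epsilon\eta)=0$, provided this derivative exists. Lemma \ref{lem:application_Danskin}(5), using (H4), gives the pointwise derivative $V_x'(0)=-\int\eta\,dm^{h(x)}=-\int\eta\,dm_x$.

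To move the derivative inside the $\mu$-integral I use $|\Phi_x(\epsilon,h)-\Phi_x(0,h)|\le|\epsilon||\eta|_\infty$, so $|V_x(\epsilon)-V_x(0)|\le|\epsilon||\eta|_\infty$ uniformly in $x$. Together with $V_x(0)$ being $\mu$-integrable, which follows from finiteness of \eqref{eq:main_dual}, dominated convergence applies to the difference quotients.

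This yields $\int\eta\,d\nu=\int\!\int\eta\,dm_x\,\mu(dx)$ for all such $\eta$. Hence the second marginal of $m$ is $\nu$, and $m\in\Cpl(\mu,\nu)$.

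\textbf{Step 2: martingale property and Schrödinger system.} Since $h(x)$ maximizes \eqref{eq:inner_dual}, the gradient formula of Lemma \ref{lem:application_Danskin}(4a) gives $x=\nabla\Gamma(h(x))=\int y\,m_x(dy)$. So $m\in\MT(\mu,\nu)$.

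Next, $\pi$ as defined has first marginal $\bar\mu$, because \eqref{eq:SchS1} holds by the definition of $\bar\varphi$. Its kernel satisfies $\pi_{h(x)}=m_x$, so
\[
\int\pi_{\bar x}\,\bar\mu(d\bar x)=\int m_x\,\mu(dx)=\nu,
\]
which is exactly \eqref{eq:SchS2}. Therefore $(\bar\varphi,\psi)$ are Schrödinger potentials for $(\bar\mu,\nu)$, and $\pi$ solves \eqref{eq:SP}; finiteness of the entropy comes from the linear bound on $\psi$ in (H3). I also need $\bar\mu\in\PP_2$. I plan to get it from the coercivity in Lemma \ref{lem:application_Danskin}(4e) combined with a growth bound on $h$; this is a delicate point, and I may need a separate argument that $h$ has at most linear growth on $\supp\mu$.

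\textbf{Step 3: optimality via value comparison.} Write $T=-\nabla\bar\varphi=\nabla\Gamma$. This is the gradient of a convex function and $T\circ h=\id$ $\mu$-a.s., so $T_\#\bar\mu=\mu$ and $(\id,T)_\#\bar\mu$ attains $\MCov(\bar\mu,\mu)$. Moreover $m=(T,\id)_\#\pi$. Then I reproduce the computation of Proposition \ref{pro:vp_to_D}(3) in reverse:
\[
H(m|\mu\otimes\nu)=H(\pi|\bar\mu\otimes\nu)-\int\bar x\cdot y\,d\pi+\MCov(\bar\mu,\mu),
\]
and the right-hand side is the \eqref{eq:vp} objective at $\bar\mu$. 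Hence
\[
\text{value}\eqref{eq:introP}\le H(m|\mu\otimes\nu)=\text{VP-objective}(\bar\mu)\le\text{value}\eqref{eq:vp}=\text{value}\eqref{eq:introP},
\]
using Lemma \ref{lem:P_VP_eq} for the final equality. All these inequalities are therefore equalities, so $m$ solves \eqref{eq:introP} and $\bar\mu$ solves \eqref{eq:vp}.

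Alternatively, the Gibbs inequality from Proposition \ref{pro:vp_to_D}(4) shows directly that $H(m|\mu\otimes\nu)$ equals value\eqref{eq:main_dual}.

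The main obstacle is the justification in Step 1 together with the integrability $\bar\mu\in\PP_2$: measurability of $h$, and its second moment under $\mu$.
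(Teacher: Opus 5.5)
Your Steps 1--3 follow the paper's proof in the same order and with the same ingredients. These are: the first-order condition in $\psi$ via Lemma \ref{lem:application_Danskin}, giving the second marginal; the first-order condition in $h$, giving the martingale property; the marginal check for $\pi$ plus the entropic characterization; and identifying the \eqref{eq:vp} objective at $\bar\mu$ with the common value through $\MCov$-optimality of the gradient map (the paper says that $(\id,h)_\#\mu$ lives on $\partial\Gamma^*$). In Step 1, your bound $|V_x(\epsilon)-V_x(0)|\le|\epsilon|\,|\eta|_\infty$ is the right way to justify the interchange. The paper only bounds the limiting derivative, which on its own does not justify it.

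The point you leave open, $\bar\mu\in\PP_2$, is not addressed in the paper either. Your proposed fix cannot work, because the claim fails under (H1)--(H4). Take $d=1$, $\nu=\frac12(\delta_{-1}+\delta_1)$ and $\mu=\sum_{k\ge1}c\,k^{-3}\,\frac12(\delta_{\tanh k}+\delta_{-\tanh k})$. Then:
\begin{itemize}
\item $\mu\lc\nu$.
\item Every $\psi$ is affine on $\supp\nu$, so $\psi\equiv0$ attains \eqref{eq:main_dual}. The value is finite because $\Gamma=\log\cosh$ and $0\le\Gamma^*\le\log2$ on $[-1,1]$.
\item $h=\operatorname{artanh}$ is the unique maximizer.
\item $\bar\mu=\sum_k c\,k^{-3}\frac12(\delta_k+\delta_{-k})$ has infinite second moment, although $\supp\mu$ is bounded. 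So ``linear growth of $h$ on $\supp\mu$'' is simply false.
\end{itemize}
Worse, \eqref{eq:vp} over $\PP_2$ has no maximizer here at all. By Proposition \ref{pro:vp_to_D}(1), a maximizer $\bar\mu'\in\PP_2$ would have second potential $a+by$ on $\{\pm1\}$ and barycentric map $\tanh(\cdot+b)$. Hence $\bar\mu'=(\operatorname{artanh}-b)_\#\mu\notin\PP_2$, a contradiction. Your sandwich in Step 3 uses admissibility of $\bar\mu$ in \eqref{eq:vp}, so it does not go through in general.

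Here is what survives. Prove part (2) by your Gibbs alternative: for $\mu$-a.e.\ $x$, $H(m_x|\nu)=\int\psi\,dm_x+\Gamma^*(x)$. Integrating and using Step 1 gives $H(m|\mu\otimes\nu)=\D(\psi)=$ value\eqref{eq:introP}. This needs only Steps 1--2 and $\psi\in L^1(\nu)$, with no moment condition on $h$.

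Part (1) as stated needs an extra hypothesis, such as $\int|h|^2\,d\mu<\infty$ (for instance, $\supp\mu$ compact inside the interior of $\conv\supp\nu$). Alternatively, \eqref{eq:vp} has to be read over a larger class of measures. Such a hypothesis also covers the integrability of $\bar x\cdot y$ that you need when invoking the entropic characterization and the Knott--Smith criterion.

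Measurability of $h$ is harmless. $\Gamma$ is finite, differentiable and strictly convex, hence of Legendre type. So $h=\nabla\Gamma^*$ is continuous on $\operatorname{int}\dom\Gamma^*$, and this set carries $\mu$ by (H4) and Lemma \ref{lem:application_Danskin}(4).
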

	
	\begin{proof}
		\textbf{First order condition of $\psi$ :} We first derive the first order condition associated to $\psi$ to show that $m$ has second marginal $\nu$. Set $Z(h) = \int e^{h \cdot y + \psi(y)} \nu(dy)$, $\Gamma(h) = \log(Z(h))$, and $$\mathcal{D}(\psi) = \int \psi(y) \nu(dy) + \int \Gamma^*(x) \mu(dx).$$ Consider a continuous map $\eta: \R^d \to \R$ with compact support and set $\psi_\epsilon = \psi + \epsilon \eta$. As $\psi$ solves \eqref{eq:main_dual}, we know that $\mathcal{D}$ admits a maximum at $\psi$, so $r : \epsilon \mapsto \mathcal{D}(\psi_\epsilon)$ admits a maximum at $0$. Therefore, if $r$ is differentiable at $0$, $r'(0) = 0$. We now prove that $r$ is differentiable at $0$ and compute its derivative at $0$. First, the map $r_1 : \epsilon \mapsto \int \psi_\epsilon d \nu$ is clearly differentiable and $r_1'(0) = \int \eta(y) \nu(dy)$. To differentiate $r_2 := r - r_1$, for all $x$ such that \eqref{eq:inner_dual} admits a unique solution, define the map $\Phi_x : \R \times \R^d \to \R$ by $$\Phi_x(\epsilon,h) = h \cdot x - \log \int e^{h \cdot y + \psi(y) + \epsilon \eta(y)}\nu(dy)$$ and the map
		$V_x : \R \to \R$ by $V_x(\epsilon) = \sup_{h \in \R^d} \Phi_x(\epsilon,h)$. 
		By Lemma \ref{lem:application_Danskin}, $V_x$ is differentiable at 0 and  
		\begin{equation*}
			V_x'(0) = - \int \eta(y) m_x(dy). 
		\end{equation*}
		As $\left| - \int \eta(y) m_x(dy) \right| \leq |\eta|_\infty$, using the dominated convergence theorem again, we get that $r_2 : \epsilon \mapsto \int V_x(\epsilon) \mu(dx)$ is differentiable at $0$, and $r_2'(0) = \int \left( - \int \eta(y) m_x(dy) \right) \mu(dx).$ Therefore, $r = r_1 + r_2$ is differentiable at $0$ and $0 = r'(0) = r_1'(0) + r_2'(0)$, that is,
		\begin{equation*}\label{eq:m_second_marg_nu}
			\int \eta(y) \nu(dy) = \iint  \eta(y) m_x(y) \nu(dy).
		\end{equation*}
		Therefore $m \in \Cpl(\mu,\nu)$.
		
		\vspace{0.5cm}
		\noindent\textbf{First order condition of $h$:} Let $x$ be such that the problem \eqref{eq:inner_dual} has a unique solution $h(x)$, so that $x \in \partial \Gamma(h(x))$. By Lemma \ref{lem:application_Danskin}, $\Gamma$ is differentiable in $h(x)$ and $\nabla \Gamma (h(x)) = \int y m^{h(x)}(dy)$. By convexity of $\Gamma$, $\int y m_x(dy) = x$. As this holds for $\mu$-a.e. $x$, $m \in \MT(\mu,\nu)$.
		\vspace{0.5cm}
		
		\noindent\textbf{Proof of (1):} For all $\theta :\R^d \to \R$ continuous bounded, by the definition of $\bar\varphi$, we have 
		\begin{align*} 
			\int_{\R^d} \theta(\bar x) d\pi(\bar x,y) &= \int \theta(\bar x)  e^{\bar\varphi(\bar  x)} \int e^{\psi(y)+\bar x \cdot y} \nu(dy) \bar\mu(d \bar x)= \int \theta(\bar x) \bar{\mu}(d \bar x).
		\end{align*}
		Moreover,
		\begin{align*}
			\int \theta(y) d\pi(\bar x,y) &= \iint  \theta(y) e^{\bar\varphi(\bar x)+\psi(y)+\bar x \cdot y} \nu(dy) \bar\mu(d\bar x) = \iint  \theta(y) e^{\bar\varphi(h(x))+\psi(y)+h(x) \cdot y} \nu(dy) \mu(dx) \\
			&= \iint \theta(y) m_x(dy) \mu(dx) = \int \theta(y) \nu(dy). 
		\end{align*}
		Therefore $\pi \in \Cpl(\bar \mu,\nu)$. The usual characterization result of the entropic minimizer (see \cite[Theorem 4.2, b)]{Nu22}) proves that $\pi$ solves $\eqref{eq:SP}$ and $(\bar\varphi,\psi)$ are the additive Schrödinger potentials with respect to $(\bar \mu,\nu)$. To prove that $\bar \mu$ solves \eqref{eq:vp} it is sufficient to prove that $\mathcal{D}(\psi) = SP(\bar \mu,\nu) + \MCov(\bar \mu ,\mu).$ As $\bar\varphi$ and $\psi$ are the additive Schrödinger potentials, we know that $SP(\bar \mu,\nu) = \int \bar\varphi d\bar \mu + \int \psi d \nu$ (see e.g. \cite[Theorem 4.7]{Nu22}).
		As  $h(x) \in \partial \Gamma^*(x)$ for $\mu$-a.e. $x$, $(\id,h)_\# \mu$ is concentrated on the sub-gradient of a proper lower semi-continuous function and is therefore optimal for the quadratic cost. Hence, $$\MCov(\mu,\bar \mu) = \int x \cdot h(x) \mu(dx).$$ As $$- \int \log \int e^{y \cdot h(x) + \psi(y)} \nu(dy) \mu(dx) = \int \bar\varphi(h(x)) \mu(dx) = \int \bar\varphi(\bar x)  \bar \mu(d\bar x),$$ we obtain
		\begin{align*}
			\mathcal{D}(\psi) &= \int \psi(y) \nu(dy) + \int \left( h(x) \cdot x - \log \int e^{y \cdot h(x) + \psi(y)} \nu(dy) \right) \mu(dx)\\
			&=\int \psi d\nu + \MCov(\bar \mu ,\mu) + \int \bar\varphi d\bar \mu = SP(\bar \mu,\nu) + \MCov(\bar \mu,\mu).
		\end{align*}
		\vspace{0.5cm}
		
		\noindent\textbf{Proof of (2):} We already know that $m \in \MT(\mu,\nu)$. As 
		\begin{align*}
			\int \bar x\cdot y d\pi(\bar x,y) &= \int h(x) \int y d\pi_{h(x)}(y) \mu(dx) = \int h(x) \int y m_x(dy) \mu(dx)\\ &= \int x \cdot h(x) \mu(dx) = \MCov(\mu,\bar \mu),
		\end{align*}
		we get
		\begin{align*}
			\eqref{eq:introP} &=\eqref{eq:vp} = H(\pi|\bar \mu \otimes \nu) - \int \bar x \cdot y d \pi(\bar x,y) + \MCov(\bar \mu, \mu)\\ 
			&= \int H(\pi_{\bar x}| \nu) \bar \mu (d \bar x)
			= \int H(\pi_{h(x)}|\nu) \mu(dx) = \int H(m_x|\nu) \mu(dx)
			=H(m|\mu \otimes \nu).\qedhere
		\end{align*}
	\end{proof}

	\begin{remark}
		In \cite{NuWi24}, Nutz and Wiesel studied the minimization problem \eqref{eq:introP} in the one-dimensional setting. If $m$ denotes the unique solution of \eqref{eq:introP}, they prove that under some technical assumption (Assumption 2.1 and 2.3), there exists a triplet of functions $(\tilde f, \tilde g, \tilde h) \in L^1(\mu) \times L^1(\nu) \times L^0(m)$ such that $$\log\left(\frac{d m}{d \mu \otimes \nu}\right)(x,y) = \tilde f (x) + \tilde g (y) + \tilde h (x) \cdot (y-x),$$ with $h(x)(y-x) \in L^1(\pi),$ and that such triplet is unique up to an affine shift. This relates to our problem \eqref{eq:main_dual} as follows. If Assumption \ref{ass:exp_affine_unique} is satisfied, let $\psi$ be a solution of the dual problem, let $h(x)$ be the unique solution of \eqref{eq:inner_dual}, and set $\bar \varphi(\bar x) = - \log \int e^{\psi(y)+ \bar x \cdot y} \nu(dy)$. According to Proposition \ref{pro:from_D_to_VP}, we have $$\log \left(\frac{d m_x}{d\nu}\right)(y) = {\bar \varphi(h(x)) + \psi(y) + h(x)\cdot y},$$ from which follows 
		\begin{equation} \label{eq:density_mart_opt}
			\log\left(\frac{dm}{d\mu \otimes \nu}\right)(x,y) = {\bar \varphi(h(x)) + \psi(y) + h(x) \cdot y}.
		\end{equation} This shows that $\big(\bar \varphi(h(x))+ h(x) \cdot x, \psi(y),h(x)\big)$ is a triplet of potentials in the sense of Nutz and Wiesel.
	\end{remark}

	\section{On the continuous-time problems }
	\label{sec:continuous}

	In this section, we focus on the continuous-time problem \eqref{eq:introP_cont} and its connections to the problem \eqref{eq:introP}. We begin with a few definitions:

	\begin{definition}\label{def:class_mart}
		The class $\mathcal A_{\mathrm{mart}}(\mu,\nu)$ of admissible martingales is defined as the set of tuples
		\[
		\left(\Omega,\mathcal F,\mathbb P,(B_t)_{t \in [0,1]},(M_t)_{t \in [0,1]}\right)
		\]
		such that
		\begin{enumerate}
			\item $(\Omega,\mathcal F,\mathbb P)$ is a probability space;
			\item $(B_t)_{t \in [0,1]}$ is a $d$-dimensional standard Brownian motion on $(\Omega,\mathcal F,\mathbb P)$;
			\item  $M_0$ is a $\mathcal F$-measurable random variable independent of $B$ with law $\mu$;
			\item\label{point:condition_volatility} there exists a progressively measurable process $(\sigma_t)_{t \in [0,1]}$ with respect to the filtration generated by $B$ and $M_0$, such that
			\begin{enumerate}
				\item
				\[
				\E\!\left[\int_0^1 \frac{|\sigma_t-I|^2}{1-t}\,dt\right]<+\infty;
				\]
				\item for all $t\in[0,1]$,
				\[
				M_t=M_0+\int_0^t \sigma_s\,dB_s;
				\]
			\end{enumerate}
			\item\label{point:law_mart} $\Law(M_1)= \nu.$
		\end{enumerate}
		For all $M\in\mathcal A_{\mathrm{mart}}(\mu,\nu)$, we define
		\[
		\mathcal C_{\mathrm{mart}}(M) 
		=\frac12 \E\!\left[\int_0^1 \frac{|\sigma_t-I|^2}{1-t}\,dt\right].
		\]
	\end{definition}
	
	Along similar lines, we introduce:
	
	\begin{definition}\label{def:class_drift}
		The class $\mathcal A_{\mathrm{drift}}(\bar{\mu},\nu)$ of martingale-drifted processes is defined as the set of tuples 
		$(\Omega,\mathcal F,\mathbb P,(B_t)_{t \in [0,1]},(X_t)_{t \in [0,1]})$ such that 
		\begin{enumerate}
			\item $(\Omega,\mathcal{F},\mathbb{P})$ is a probability space;
			\item $(B_t)_{t \in [0,1]}$ is a $d$-dimensional standard Brownian motion on $(\Omega,\mathcal{F},\mathbb{P})$;
			\item $X_0$ is a $\mathcal F$-measurable random variable independent of $B$ with law $\bar\mu$;
			\item\label{point:condition_drift} There exists a progressively measurable martingale  $(u_t)_{t\in[0,1]}$ with respect to the filtration generated by $B$ and $X_0$ such that: 
			\begin{enumerate}
				\item $\E\left(\int_0^1 |u_t|^2 dt\right) < + \infty$;
				\item\label{eq:deco_drift}  For all $t \in [0,1]$, $X_t = X_0+ B_t + \int_0^t u_s\,ds$;
			\end{enumerate}
			\item\label{point:law_drift} $\Law(X_1) = \nu.$
		\end{enumerate}
		Given $X \in \mathcal A_{\mathrm{drift}}(\bar\mu,\nu)$, we define 
		\begin{equation*}
			\mathcal C_{\mathrm{drift}}(X)
			=\frac12\,\mathbb E\!\left[\int_0^1 |u_t|^2\,dt\right].
		\end{equation*}
	\end{definition}
	
	We recall that if $Z$ is a random variable defined on a filtered probability space, then its associated Doob martingale is $(\mathbb E[Z|\mathcal F_t])_{t\in[0,1]}$. With a slight abuse of terminology, if $X$ is a stochastic process, then its associated Doob martingale is the Doob martingale associated to its terminal value $X_1$. 
	
	\subsection{A cost-preserving bijection between martingale-drifted processes and admissible martingales}

	Fix $x \in \R^d$ and $m_x \in \PP(\R^d)$.
	
	To lighten the notation, throughout this subsection we write $\mathcal A_{\mathrm{mart}}$ and $\mathcal A_{\mathrm{drift}}$ in place of $\mathcal A_{\mathrm{mart}}(\delta_x,m_x)$ and $\mathcal A_{\mathrm{drift}}(\delta_x,m_x)$, respectively. In \cite[Section 2]{ElMi20}, the authors observed that any martingale can be associated with a \emph{martingale drift} via an application of the stochastic Fubini theorem. Moreover, they show that the Föllmer drift can be expressed in terms of the volatility of the corresponding Doob martingale. In fact, this construction does not rely on the specific structure of the Föllmer drift, but applies more generally to any martingale drift. We formalize and extend this observation by proving that these operations define a cost-preserving correspondence between the set of martingale drifts and the set of admissible martingales (from $\delta_x$ to $m_x$).

	\begin{theorem}\label{them:fondamental_bijection}
		The map
		\[
		\Phi:\mathcal A_{\mathrm{drift}}\longrightarrow \mathcal A_{\mathrm{mart}}
		\]
		defined by
		\[
		\Phi(X)_t = \mathbb E\big[X_1\mid \sigma(\{B_s, s \in [0,t]\})  \big],
		\qquad t\in[0,1],
		\]
		is a bijection between $\mathcal A_{\mathrm{drift}}$ and $\mathcal A_{\mathrm{mart}}$.
		Its inverse $\Phi^{-1}:\mathcal A_{\mathrm{mart}}\to \mathcal A_{\mathrm{drift}}$
		is given by
		\begin{equation*}
			\Phi^{-1}(M)_t = x+B_t + \int_0^t  u_s ds,
		\end{equation*}
		where, if the dynamic of $M$ is given by $M_t = \sigma_t d B_t$, then
		$ u_s := \int_0^s \frac{\sigma_r - I}{1-r}dB_r$. 
		Moreover, the costs are preserved: for all $(X, M) \in  \mathcal A_{\mathrm{drift}} \times \mathcal A_{\mathrm{mart}}$, it holds
		\[
		\mathcal C_{\mathrm{mart}}\big(\Phi(X)\big)=\mathcal C_{\mathrm{drift}}(X),
		\qquad
		\mathcal C_{\mathrm{drift}}\big(\Phi^{-1}(M)\big)
		=\mathcal C_{\mathrm{mart}}(M).
		\]
	\end{theorem}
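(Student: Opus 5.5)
The plan is to make explicit the stochastic-Fubini computation of \cite[Section 2]{ElMi20} in both directions, using martingale representation in the Brownian filtration. Since the initial law is $\delta_x$, the filtration generated by $B$ and $X_0$ (resp.\ $M_0$) is just the Brownian filtration. Throughout I keep the probability space and the Brownian motion $B$ fixed, so $\Phi$ and its inverse act only on the last component of the tuple. I will also use that $m_x$ has barycenter $x$. This is implicit: $\mathcal A_{\mathrm{mart}}(\delta_x,m_x)\neq\emptyset$ forces it, and in the drift class it forces $u_0=0$, because $\E[X_1]=x+\int_0^1\E[u_s]\,ds=x+u_0$.

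\textbf{Step 1 (drift $\Rightarrow$ martingale).} Let $X\in\mathcal A_{\mathrm{drift}}$ with martingale drift $u$. Since $u$ is a square-integrable martingale in the Brownian filtration, martingale representation gives $u_s=u_0+\int_0^s v_r\,dB_r$ with $v$ progressive, and $u_0=0$ by the remark above. Stochastic Fubini then gives
\[
\int_0^1 u_s\,ds=\int_0^1(1-r)\,v_r\,dB_r .
\]
The interchange is justified because $\E\int_0^1|u_s|^2ds=\int_0^1(1-r)\E|v_r|^2dr<\infty$, so $r\mapsto(1-r)v_r$ lies in $L^2(dr\otimes\mathbb P)$. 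Consequently
\[
\Phi(X)_t=\E[X_1\mid\F_t]=x+\int_0^t\big(I+(1-r)v_r\big)\,dB_r ,
\]
so $\sigma_r=I+(1-r)v_r$ and $\Phi(X)_1=X_1\sim m_x$. The same Fubini computation yields the cost identity:
\[
\E\int_0^1|u_t|^2dt=\int_0^1\!\!\int_0^t\E|v_r|^2\,dr\,dt=\int_0^1(1-r)\E|v_r|^2\,dr=\E\int_0^1\frac{|\sigma_r-I|^2}{1-r}\,dr .
\]
In particular this quantity is finite, so $\Phi(X)\in\mathcal A_{\mathrm{mart}}$ and $\mathcal C_{\mathrm{mart}}(\Phi(X))=\mathcal C_{\mathrm{drift}}(X)$.

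\textbf{Step 2 (martingale $\Rightarrow$ drift).} Let $M\in\mathcal A_{\mathrm{mart}}$ and set $v_r:=(\sigma_r-I)/(1-r)$ and $u_s:=\int_0^s v_r\,dB_r$. For each $s<1$ we have $\E|u_s|^2=\int_0^s\E|\sigma_r-I|^2(1-r)^{-2}dr<\infty$, because $(1-r)^{-2}\le(1-s)^{-1}(1-r)^{-1}$ on $[0,s]$. So $u$ is a martingale on $[0,1)$. The same double-integral computation as in Step 1, read backwards, gives $\E\int_0^1|u_t|^2dt=\E\int_0^1|\sigma_r-I|^2/(1-r)\,dr<\infty$; this also handles the endpoint $t=1$, which is Lebesgue-null. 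Now define $X_t:=x+B_t+\int_0^tu_s\,ds$. Stochastic Fubini, applied with the same integrability as in Step 1, gives $X_1=x+\int_0^1\sigma_r\,dB_r=M_1\sim m_x$. Hence $X\in\mathcal A_{\mathrm{drift}}$ and the costs agree.

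\textbf{Step 3 (bijectivity).} For $X$ with drift $u$, the volatility of $\Phi(X)$ is $I+(1-r)v_r$, and Step 2 recovers from it exactly $v$, hence $u$ (using $u_0=0$); so $\Phi^{-1}\circ\Phi=\id$. Conversely, Step 2 produces a process $X$ with $X_1=M_1$, so $\Phi(X)_t=\E[M_1\mid\F_t]=M_t$, because $M$ is a true martingale (its $\sigma$ is square-integrable, since $|\sigma-I|^2\le|\sigma-I|^2/(1-t)$). Thus $\Phi\circ\Phi^{-1}=\id$.

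The main obstacle is the justification of stochastic Fubini near $r=1$, where the weight $(1-r)^{-1}$ degenerates and $u$ need not be bounded in $L^2$ up to time $1$. I would handle this by first applying Fubini on $[0,1-\delta]$, where everything is square integrable, and then letting $\delta\to0$ in $L^2$. Two facts make the limit work: the Itô isometry controls $\int_{1-\delta}^1(1-r)v_r\,dB_r$, and $\E\big|\int_{1-\delta}^1u_s\,ds\big|^2\le\delta\,\E\int_{1-\delta}^1|u_s|^2ds\to0$.
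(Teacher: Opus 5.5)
Your proposal is correct and follows essentially the same route as the paper: martingale representation of the drift, stochastic Fubini to identify $\sigma_r=I+(1-r)v_r$ (and, in the other direction, $X_1=M_1$), It\^o isometry plus Fubini for the cost identity, and then a check of both compositions. You also supply two details the paper's proof leaves implicit. First, $u_0=0$ is forced by $m_x$ having barycenter $x$. Second, truncating at $1-\delta$ justifies stochastic Fubini when only the weighted bound $\int_0^1(1-r)\E|v_r|^2\,dr<\infty$ is available.
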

	
	\begin{remark}
		We write $\Phi(X) = M$ to make the notation lighter, but to be fully exhaustive, one should rather write $\Phi((\Omega,\mathcal{F},\mathbb{P}, B, X^x)) = (\Omega,\mathcal{F},\mathbb{P}, B, M^x)$ (and similarly for $\Phi^{-1}$).
	\end{remark}	
	
	\begin{proof}
		\textbf{From drifted to martingale:} Let $X\in\mathcal A_{\mathrm{drift}}$ and $u$ be as in Definition \ref{def:class_drift}, and write $M = \Phi(X)$, i.e., if $(\mathcal{F}_t)_{t \in [0,1]}$ stands for the Brownian filtration, $M_t = \E[X_1|\mathcal{F}_t]$ for all $t \in [0,1]$.	Since $u$ is a continuous square--integrable martingale with respect to the Brownian filtration, there exists a progressively measurable process $\alpha=(\alpha_t)_{t\in[0,1]}$ with respect to the Brownian filtration, by the martingale representation theorem (see e.g. \cite[Theorem 4.33]{Okse03}),  such that
		$
		\E\!\int_0^1 |\alpha_t|^2\,dt < \infty
		$ 
		and   
		\begin{equation}\label{eq:u-rep}
			u_t = \int_0^t \alpha_s\,dB_s,
			\qquad t\in[0,1].
		\end{equation}
		By stochastic Fubini, we have
		\[
		\int_0^1 u_s\,ds
		=
		\int_0^1\!\left(\int_0^s \alpha_r\,dB_r\right)\!ds
		=
		\int_0^1 (1-r)\alpha_r\,dB_r.
		\]
		Hence
		\begin{equation}\label{eq:X1-as-int}
			X_1
			=
			x+ B_1+\int_0^1 u_s\,ds
			=
			x+ \int_0^1 \Big(I+(1-t)\alpha_t\Big)\,dB_t.
		\end{equation}
		Define $\sigma_t = I+(1-t)\alpha_t$ for all $t \in [0,1]$.
		Taking conditional expectation in \eqref{eq:X1-as-int} yields
		\[
		M_t=\E[X_1\mid \mathcal{F}_t]
		=
		x+ \int_0^t \sigma_s\,dB_s,\qquad t\in[0,1].
		\]
		As $
		\sigma_t-I = (1-t)\alpha_t$, we have
		\begin{equation}\label{eq:alpha_and_sigma}
			\frac{|\sigma_t-I_d|^2}{1-t}
			=
			(1-t)\,|\alpha_t|^2.
		\end{equation}
		Therefore,
		\begin{equation*}
			\E\!\int_0^1 \frac{|\sigma_t-I|^2}{1-t}\,dt
			=
			\E\!\int_0^1 (1-t)\,|\alpha_t|^2\,dt
			<\infty,
		\end{equation*}
		which shows $\sigma$ satisfies Condition \eqref{point:condition_volatility} of Definition \ref{def:class_mart}. As $M_1 = \E[X_1|\mathcal{F}_1] = X_1$, $M$ also satisfies Condition \eqref{point:law_mart} of Definition \ref{def:class_mart}, which proves $M \in \mathcal A_{\textrm{mart}}$. Moreover, 
		\begin{align*}
			(\Phi^{-1} \circ \Phi)(X)_t &= \Phi^{-1}(M)_t = x+ B_t + \int_0^t \left(\int_0^s \frac{\sigma_r-I}{1-r} d B_r \right) ds \\
			&= x+ B_t + \int_0^t \left(\int_0^s \alpha_r d B_r \right) ds =B_t + \int_0^t u_s ds = X_t.
		\end{align*}
		Finally, by It\^o's isometry applied to \eqref{eq:u-rep}, we have 
		$
		\E|u_t|^2
		=
		\E\!\int_0^t |\alpha_s|^2\,ds
		$. 
		Integrating over $t\in[0,1]$ and using Fubini yields,
		\[
		\E\!\int_0^1 |u_t|^2\,dt
		=
		\E\!\int_0^1 (1-s)\,|\alpha_s|^2\,ds.
		\]
		Combined with Equation \eqref{eq:alpha_and_sigma}, this shows
		\[
		\frac12\,\E\!\int_0^1 |u_t|^2\,dt
		=
		\frac12\,\E\!\int_0^1 \frac{|\sigma_t-I|^2}{1-t}\,dt,
		\]
		that is $\mathcal C_{\mathrm{drift}}(X)
		=
		\mathcal C_{\mathrm{mart}}(\Phi(X)).$\\
		\newline
		\textbf{From martingale to drifted:} Let $M\in\mathcal A_{\mathrm{mart}}$, $\sigma$ be as in Definition \ref{def:class_mart}, and set $X = \Phi^{-1}(M)$, i.e.,
		\begin{equation*}
			X_t = x+ B_t + \int_0^t u_s\,ds,\qquad \text{where} \quad u_t =\int_0^t \frac{\sigma_s-I}{1-s}\,dB_s,\qquad t\in[0,1[. 
		\end{equation*}	
		By Fubini,
		\begin{equation*}\label{eq:Fubini_drift}
			\int_0^1 \E\!\int_0^t \frac{|\sigma_s-I|^2}{(1-s)^2}\,ds\,dt\\
			=
			\E\!\int_0^1 \frac{|\sigma_s-I|^2}{(1-s)^2}\,(1-s)\,ds
			=
			\E\!\int_0^1 \frac{|\sigma_s-I|^2}{1-s}\,ds < + \infty,
		\end{equation*}
		which ensures $(u_t)_{t \in [0,1]}$ is a well defined martingale with respect to the Brownian filtration. By It\^o's isometry and Fubini,
		\begin{equation}\label{eq:finite_drift}
			\E\!\int_0^1 |u_t|^2\,dt
			=
			\int_0^1 \E|u_t|^2\,dt = \int_0^1 \int_0^t \frac{|\sigma_s-I|^2}{(1-s)^2} ds dt 
			= \E\!\int_0^1 \frac{|\sigma_s-I|^2}{1-s}\,ds,
		\end{equation}
		so $u$ satisfies Condition \eqref{point:condition_drift} of Definition \ref{def:class_drift}. By stochastic Fubini,
		\[
		\int_0^1 u_s\,ds
		=
		\int_0^1 \left(\int_0^s \frac{\sigma_r-I}{1-r}\,dB_r\right) ds
		=
		\int_0^1 (1-r)\,\frac{\sigma_r-I}{1-r}\,dB_r
		=
		\int_0^1 (\sigma_r-I)\,dB_r.
		\]
		Therefore,
		\[
		X_1
		=
		x+B_1+\int_0^1 u_s\,ds
		=
		x+\int_0^1 I\,dB_s + \int_0^1 (\sigma_s-I)\,dB_s
		= x+
		\int_0^1 \sigma_s\,dB_s
		=
		M_1.
		\]
		In particular, $\Law(X_1)=\Law(M_1)=m_x$, so $X$ satisfies Condition \eqref{point:law_drift} of Definition \ref{def:class_drift}, which concludes $X\in\mathcal A_{\mathrm{drift}}$. Moreover, as $M$ is a martingale with respect to the Brownian filtration $(\mathcal{F}_t)_{t \in [0,1]}$,  
		$$
		\Phi \circ \Phi^{-1}(M)_t =\Phi(X)_t
		=
		\E[X_1\mid\mathcal F_t]
		=
		\E[M_1\mid\mathcal F_t]
		=
		M_t \quad \text{for all } t \in [0,1].$$ The equality
		$\mathcal\mathcal C_{\mathrm{mart}}(M) =  C_{\mathrm{drift}}(\Phi^{-1}(M))$ follows directly by Equation \eqref{eq:finite_drift}.
	\end{proof}
	
	\begin{remark}\label{remq:drift_to_martingale_drift}
		Assume that $(\Omega, \mathcal F, \mathbb P, B, X^x)$ satisfies all the requirements of Definition \ref{def:class_drift}, except that we do not ask for $(u_t)_{t \in [0,1]}$ to be a martingale. Denoting by $M^x$ the Doob martingale associated with $X^x$, one can show that $M^x$ belongs to $\mathcal A_{\mathrm{mart}}(\delta_x,m_x)$ and therefore canonically induces a drifted process $\tilde X^x \in \mathcal A_{\mathrm{drift}}(\delta_x,m_x)$ via $\Phi$.	Remarkably, this drifted process $\tilde X^x$ -- characterized as the unique process with martingale drift sharing the same terminal value as $X^x$ -- has lower energy than the original drift. Moreover, the two energies coincide if and only if the corresponding processes are almost surely equal. The technical proof, relying on the Hilbert space structure of $L^2(\Omega \times [0,1])$, can be found in \cite[Appendix B]{BaBeBiLe26v1}.
	\end{remark}
	
	The correspondence between the classes $\mathcal{A}_{\mathrm{drift}}$ and $\mathcal{A}_{\mathrm{mart}}$ will prove itself useful, as it will allow us to apply standard result on  drifts to solve \eqref{eq:introP_cont}. We will make use of the following lemma, which follows for instance from  \cite[Proposition 1]{Le13}:
	
	\begin{lemma}\label{lem:energy_entropy_bound}
		Let $\left(\Omega,\mathcal F,\mathbb P,(B_t)_{t \in [0,1]},(X_t^x)_{t \in [0,1]}\right)\in \mathcal{A}_{\mathrm{drift}}$. Then, $H(\Law_\mathbb{P}(X^x)\mid \mathbb{W}_x) \le C_{\mathrm{drift}}(X^x).$
	\end{lemma}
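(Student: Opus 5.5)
The plan is a Girsanov argument combined with lower semicontinuity of relative entropy, in the spirit of \cite{Le13}. Write $X^x_t = x + B_t + \int_0^t u_s\,ds$ with $\E\int_0^1|u_t|^2dt<\infty$. On path space, the law of $X^x$ is the image of $\mathbb P$ under the map $\omega\mapsto X^x(\omega)$. By the data processing inequality (relative entropy decreases under measurable maps), it is enough to produce a probability $\mathbb Q$ on $(\Omega,\mathcal F_1)$ with two properties. First, under $\mathbb Q$ the process $X^x$ has law $\mathbb W_x$. Second, $H(\mathbb P\mid\mathbb Q)\le \frac12\E_{\mathbb P}\int_0^1|u_t|^2dt$. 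Here $\mathcal F_1$ is the Brownian filtration, enlarged by $M_0=x$, which is trivial.

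\textbf{Step 1: localize.} Set $\tau_n=\inf\{t:\int_0^t|u_s|^2ds\ge n\}\wedge 1$ and $u^n_t=u_t1_{t\le\tau_n}$. Then Novikov holds for $u^n$, so
\[
\frac{d\mathbb Q_n}{d\mathbb P}=\exp\Big(-\int_0^1u^n_t\cdot dB_t-\tfrac12\int_0^1|u^n_t|^2dt\Big)
\]
defines a probability. By Girsanov, $X^{x,n}_t:=x+B_t+\int_0^tu^n_sds$ is a Brownian motion started at $x$ under $\mathbb Q_n$. Next compute
\[
H(\mathbb P\mid\mathbb Q_n)=\E_{\mathbb P}\Big[\int_0^1u^n\cdot dB+\tfrac12\int_0^1|u^n|^2dt\Big]=\tfrac12\E_{\mathbb P}\int_0^{\tau_n}|u_t|^2dt.
\]
The stochastic integral has zero $\mathbb P$-expectation because $u^n$ is square integrable. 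Data processing then gives
\[
H(\Law_{\mathbb P}(X^{x,n})\mid\mathbb W_x)\le\tfrac12\E\int_0^1|u_t|^2dt=C_{\mathrm{drift}}(X^x).
\]

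\textbf{Step 2: pass to the limit.} We have $\sup_t|X^{x,n}_t-X^x_t|\le\int_{\tau_n}^1|u_s|ds\to0$ almost surely, since $\tau_n\to1$ a.s. by square integrability. Hence $\Law(X^{x,n})\to\Law(X^x)$ weakly on $C([0,1];\R^d)$. Joint lower semicontinuity of relative entropy under weak convergence then gives the claim.

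The main obstacle is Step 1, namely justifying the Girsanov change and the entropy identity without Novikov. The localization handles this, and the only remaining care is checking that the martingale term has zero mean. The martingale property of $u$ is never used, so the lemma holds for general square-integrable adapted drifts.
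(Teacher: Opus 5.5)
Your proof is correct and takes essentially the same route the paper relies on. The paper gives no argument of its own and defers to \cite[Proposition 1]{Le13}, whose proof is exactly yours: Girsanov for the localized drift $u\,1_{[0,\tau_n]}$, the data processing inequality, and lower semicontinuity of $H(\cdot\mid\mathbb W_x)$ under the almost sure uniform convergence $X^{x,n}\to X^x$. Your observation that the martingale property of $u$ is never used is also correct, and it matches the generality of Lehec's statement.
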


	\subsection{On the Föllmer process of Schrödinger bridges}
	Define
	$\Omega = \mathcal{C}([0,1],\R^d)$, $\mathcal{F} = \mathcal{B}(\Omega)$, and let $X = (X_t)_{t \geq 0}$ be the canonical process, defined by $X_t(\omega) = \omega(t)$ for all $(t,\omega) \in [0,1] \times \Omega$.
	
	\subsubsection{Schrödinger bridge problem}
	Fix $x \in \R^d$ and let $m_x$ be a probability measure with barycentre $x$. Denote by $\mathbb{W}^x$ the law of a standard Brownian motion started at $x \in \R^d$. The continuous-time Schrödinger (Bridge) problem is, in this particular case, defined as  
	\begin{equation}\label{eq:SB}
		\inf_{\mathbb{Q} \in \PP(\Omega),{X_0}_\# \mathbb{Q} = \delta_x, {X_1}_\# \mathbb{Q} = m_x} H(\mathbb{Q}|\mathbb{W}^x).\tag{SB}
	\end{equation}
	
	The following is well-known and direct to prove:

	\begin{proposition}
		Assume $H(m_x|\gamma_x)<+ \infty$. Then the problem \eqref{eq:SB} admits a unique solution, called Schrödinger bridge from $x$ to $m_x$ and denoted $\mathbb{Q}^x$. Moreover,  \begin{equation*}
			\label{eq:rho_x}\frac{d\mathbb{Q}^x}{d \mathbb{W}^x}(\omega) = \rho_x(\omega_1), \qquad \text{where } \rho_x := \frac{d m_x}{d \gamma_x}, 
		\end{equation*}
		and it holds $H(\mathbb{Q}^x|\mathbb{W}^x) = H(m_x|\gamma_x).$
	\end{proposition}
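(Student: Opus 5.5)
The proof rests on the chain rule (additivity) of relative entropy with respect to the terminal marginal. Since the constraint $X_0=x$ is automatically satisfied under $\mathbb W^x$, the only active constraint is ${X_1}_\#\mathbb Q=m_x$.

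First, recall that ${X_1}_\#\mathbb W^x=\gamma_x$. Disintegrate both $\mathbb Q$ and $\mathbb W^x$ along $X_1$:
\[
\mathbb Q(d\omega)=\int \mathbb Q^{y}(d\omega)\,({X_1}_\#\mathbb Q)(dy),\qquad \mathbb W^x(d\omega)=\int \mathbb W^{x,y}(d\omega)\,\gamma_x(dy),
\]
where $\mathbb W^{x,y}$ is the Brownian bridge from $x$ to $y$. The standard additivity formula for relative entropy then gives, for any $\mathbb Q$ admissible in \eqref{eq:SB},
\[
H(\mathbb Q\mid\mathbb W^x)=H(m_x\mid\gamma_x)+\int H(\mathbb Q^{y}\mid \mathbb W^{x,y})\,m_x(dy)\ \ge\ H(m_x\mid\gamma_x).
\]
I would justify this identity directly. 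When $\mathbb Q\ll\mathbb W^x$, write $\frac{d\mathbb Q}{d\mathbb W^x}(\omega)=\rho_x(\omega_1)\cdot g(\omega)$, where $g$ is the conditional density. Then take $\int\log$ under $\mathbb Q$, splitting the logarithm of the product into two terms. When $\mathbb Q\not\ll\mathbb W^x$, both sides are $+\infty$, or the inequality holds trivially.

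Second, define $\mathbb Q^x:=\rho_x(X_1)\cdot\mathbb W^x$. This is a probability measure, since $\int\rho_x(\omega_1)\,\mathbb W^x(d\omega)=\int\rho_x\,d\gamma_x=1$. It is admissible: its time-$0$ marginal is $\delta_x$, and its time-$1$ marginal is $\rho_x\gamma_x=m_x$. Its conditional laws given $X_1$ coincide with the Brownian bridges $\mathbb W^{x,y}$, because the density depends on $\omega$ only through $\omega_1$. Hence the second term in the decomposition vanishes. Directly,
\[
H(\mathbb Q^x\mid\mathbb W^x)=\int\log\rho_x(\omega_1)\,\mathbb Q^x(d\omega)=\int\log\rho_x\,dm_x=H(m_x\mid\gamma_x)<\infty,
\]
so $\mathbb Q^x$ attains the lower bound and is therefore optimal.

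Third, uniqueness follows because any minimizer must have zero conditional entropy for $m_x$-a.e. $y$, so that $\mathbb Q^y=\mathbb W^{x,y}$. This forces $\mathbb Q=\int\mathbb W^{x,y}\,m_x(dy)=\mathbb Q^x$. Alternatively, uniqueness follows from strict convexity of $H(\cdot\mid\mathbb W^x)$ on the convex admissible set, given that the value is finite.

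The only mildly delicate step is the rigorous form of the chain rule, namely measurability of the disintegrations on the Polish space $\Omega$ and the case $\mathbb Q\not\ll\mathbb W^x$. Both are standard, for instance via \cite[Appendix A]{Le14}, and I would cite that reference rather than reprove it.
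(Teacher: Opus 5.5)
Your proof is correct, and it is the standard argument the paper has in mind when it calls the statement ``well-known and direct to prove'': the lower bound from additivity of relative entropy under disintegration along $X_1$, attainment by $\rho_x(X_1)\cdot\mathbb W^x$, and uniqueness because the conditional Brownian-bridge entropies must vanish (the paper gives no written proof of its own). One point is worth making explicit. Splitting the logarithm is legitimate because $\int(\log\rho_x)^-\,dm_x=\int\rho_x(\log\rho_x)^-\,d\gamma_x\le 1/e$, so $\log\rho_x\in L^1(m_x)$ under the assumption $H(m_x|\gamma_x)<\infty$.
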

	
	In fact the term 'Schrödinger bridge' is more typically employed for the solution of \eqref{eq:SB} when the initial distribution is not a Dirac, while in the Dirac case the terminology 'h-transform' or 'conditioned Brownian motion' is often preferred in the classical literature. For simplicity, we only use the former terminology.

	\subsubsection{Föllmer drift of the Schrödinger bridge}
	Given a probability measure $\mathbb{Q}$ absolutely continuous with respect to the standard Wiener measure $\mathbb{W}$ on $\R_+$, Föllmer proved the existence of a progressively measurable process $(u_t)_{t \geq 0}$ on $\mathcal C(\R_+,\R^d)$ with values in $\R^d$ such that:
	\begin{enumerate}
		\item For all $t \geq 0$ and $\omega \in \mathcal C(\R_+,\R^d)$, $\int_0^t |u_s(\omega)|^2 ds < +\infty$;
		\item $\Law_{\mathbb{Q}}(X - U) = \mathbb{W}$ where $U$ is defined by $U_t = \int_0^t u_s\,ds$ for all $t \geq 0$;
		\item $H(\mathbb{Q}\mid\mathbb{W}) = \frac{1}{2} \mathbb E_{\mathbb{Q}}\int_0^{+\infty} |u_t(\omega)|^2dt $.
	\end{enumerate}
	The process $(u_t)_{t \geq 0}$ is called the \emph{Föllmer drift} associated with $\mathbb{Q}$, while $(\Omega,\mathcal{F},\mathbb{Q},X)$ is referred to as the \emph{Föllmer process} of $\mathbb{Q}$ (or associated to $\mathbb{Q}$).
	
	In the following, we compute the Föllmer drift when $\mathbb{Q}$ is the Schrödinger bridge from $\delta_x$ to $m_x$, as well as the volatility of the associated Doob martingale. The expression of the Föllmer drift is classical in the literature on h-transforms, but see Lehec's \cite[Theorem~12]{Le13} for a recent reference.  An analogue computation of the volatility of the Doob martingale associated to the Föllmer drift of the Schrödinger bridge can be found in Eldan and Mikulincer's \cite{ElMi20}. 	See \cite[Appendix C]{BaBeBiLe26v1} for a self-contained proof.
	\begin{proposition}\label{prop:expression_Follmer_drift}
		Let $x \in \R^d$, $m_x \in \PP(\R^d)$ absolutely continuous with respect to $\gamma_x$, and define $\rho_x = \frac{dm_x}{d \gamma_x}.$ As in the previous subsection $\mathbb Q_x$ denotes the Schrödinger bridge from $\delta_x$ to $m_x.$ Let $(B_t^x)_{t \in [0,1]}$ be the process defined by 
		\begin{equation*}\label{eq:time_changed_BM}
			B_t^x  = X_t - x - \int_0^t u_s^x(X_s)ds,
		\end{equation*}
		where $$u_s^x(z) := \nabla \log (\gamma^{1-s}*\rho_x)(z).$$ Then:
		\begin{enumerate}
			\item\label{point:expression_drift} The tuple $X^x := (\Omega, \mathcal{F}, \mathbb{Q}^x, (B_t^x)_{t \in [0,1]}, X)$ belongs to $\mathcal A_{\mathrm{drift}}$ and
			\begin{equation*}\label{eq:equality_drift}
				\mathcal{C}_{\textrm{drift}}(X^x) := \frac{1}{2}\E\left(\int_0^1 |u_t^x|^2 dt\right) = H(\mathbb{Q}^x|\mathbb{W}^x) = H(m_x|\gamma_x).
			\end{equation*}
			This shows that $(u_t^x)_{t \geq 0}$ is the Föllmer drift of $\mathbb Q_x.$
			\item\label{point:law_Follmer_drift} It holds
			$$\Law_{\mathbb{Q}^x}(X_1|X_t=z) =  \frac{ \rho_x(y) \phi_{1-t}(y-z)}{\int \rho_x(\tilde y) \phi_{1-t}(\tilde y-z)d \tilde y }dy.$$
			\item\label{point:expression_martingale} The Doob martingale $M^x =(M_t^x)_{t \in [0,1]}$ canonically associated to $X^x$, i.e., 
			$$M_t^x = \E_{\mathbb{Q}^x}[X_1|\mathcal{F}_t^x], \qquad \mathcal{F}_t^x = \sigma(\{B_s^x~;~0 \leq s \leq t\}),$$
			satisfies
			\begin{equation}\label{eq:explicit_M_via_X}
				M_t^x = X_t^x + (1-t) u_t^x(X_t^x) \quad \text{and} \quad dM_t^x = \sigma_t^x(M_t^x) d B_t^x,
			\end{equation}
			where $\sigma_t^x := I + (1-t) \nabla^2 \log (\gamma^{1-t}*\rho_x).$ Moreover,
			\begin{equation*}
				H(m_x|\gamma_x) = \frac12 \E \left( \int_0^1 \frac{|\sigma_t^x - I|^2}{1-t}dt\right).
			\end{equation*}
		\end{enumerate}
	\end{proposition}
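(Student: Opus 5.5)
The plan is to work under $\mathbb Q^x$, which has the explicit density $\rho_x(\omega_1)$ with respect to $\mathbb W^x$, and to derive all three points from the Markov property of Brownian motion combined with the heat semigroup. Write $g_s:=\gamma^{1-s}*\rho_x$, so that $g_s(z)=\E_{\mathbb W^x}[\rho_x(X_1)\mid X_s=z]$. Then the density process $D_s:=\E_{\mathbb W^x}[\rho_x(X_1)\mid\mathcal F_s]=g_s(X_s)$ is a positive $\mathbb W^x$-martingale with $D_0=g_0(x)=1$, since $\int\rho_x\,d\gamma_x=1$.

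\textbf{Point \eqref{point:law_Follmer_drift}.} Under $\mathbb W^x$, $X_1$ given $X_t=z$ has density $\phi_{1-t}(y-z)$. Reweighting by $\rho_x(X_1)$ and applying Bayes' formula gives the stated conditional law directly.

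\textbf{Point \eqref{point:expression_drift}.} $g_s$ is smooth and satisfies the backward heat equation, so Itô's formula gives $dD_s=\nabla g_s(X_s)\cdot dX_s=D_s\,u_s^x(X_s)\cdot dX_s$. Girsanov's theorem then shows that $B^x$ is a $\mathbb Q^x$-Brownian motion. This step needs care because $\rho_x$ may vanish, so $D$ can hit zero; I would use localisation, noting that $\mathbb Q^x(D_s>0\ \forall s)=1$.

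Next I need $u^x$ to be a $\mathbb Q^x$-martingale. By Tweedie's formula, $u_s^x(z)=\big(\E[X_1\mid X_s=z]-z\big)/(1-s)$, and one can check that $d\,u_s^x(X_s)=\nabla^2\log g_s(X_s)\,dB^x_s$, with the drift terms cancelling because $\log g$ solves a Hamilton--Jacobi-type equation. This is exactly the Föllmer drift computation of Lehec \cite[Theorem~12]{Le13}.

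For the energy identity I use the density representation $\log D_1=\int_0^1 u\cdot dB^x+\frac12\int_0^1|u|^2dt$ under $\mathbb Q^x$. Taking expectations gives $H(\mathbb Q^x|\mathbb W^x)=\frac12\E\int_0^1|u^x_t|^2dt$, provided the stochastic integral is a true martingale. I would obtain this by localising and then passing to the limit with Fatou's lemma and monotone convergence, using $H(m_x|\gamma_x)<\infty$. Lemma \ref{lem:energy_entropy_bound} supplies the reverse inequality, and combined with the preceding proposition ($H(\mathbb Q^x|\mathbb W^x)=H(m_x|\gamma_x)$) this yields equality. The same integrability shows that $X^x\in\mathcal A_{\mathrm{drift}}$.

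\textbf{Point \eqref{point:expression_martingale}.} By the Markov property and point \eqref{point:law_Follmer_drift}, $M_t^x=\E[X_1\mid X_t]=X_t+(1-t)u_t^x(X_t)$, using the Tweedie identity again. Filtration issues arise here: the filtration generated by $B^x$ should coincide with that generated by $X$, which holds by strong uniqueness for the SDE with a locally Lipschitz drift on $[0,1)$.

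Theorem \ref{them:fondamental_bijection} then gives $\sigma_t=I+(1-t)\alpha_t$, where $du_t=\alpha_t\,dB_t$. With the computation above, $\alpha_t=\nabla^2\log g_t(X_t)$, which gives the formula for $\sigma^x$. The cost identity follows from the cost preservation in that theorem together with point \eqref{point:expression_drift}. The stated form $\sigma_t^x(M_t^x)$ is to be read as a function evaluated along the path; it is a notational point, and $X_t$ can be recovered from $M_t$ through the injective map $z\mapsto z+(1-t)u_t(z)$.

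The main obstacle is the true-martingale and integrability justification in point \eqref{point:expression_drift}, with $\rho_x$ only known to have finite entropy. My first approach is localisation via stopping times $\tau_n=\inf\{s:|u_s|\ge n \text{ or } s\ge 1-1/n\}$, combined with lower semicontinuity of entropy.
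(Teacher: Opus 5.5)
Your proposal is correct and follows the standard $h$-transform route that the paper relies on; the paper itself defers the proof to Lehec, Eldan--Mikulincer and an appendix. The steps match: the density process $g_s(X_s)$ with Girsanov, Bayes plus Tweedie for the backward kernel and for $M^x_t=\E[X_1\mid X_t]$, the Hamilton--Jacobi cancellation giving $du_t=\nabla^2\log g_t(X_t)\,dB^x_t$, and Theorem~\ref{them:fondamental_bijection} for $\sigma^x$ and the cost, with $\sigma^x_t$ rightly evaluated at $X^x_t$. One step needs tightening: $\E\int_0^1|u_t|^2\,dt<\infty$ alone does not make the local martingale $u$ a true martingale, which membership in $\mathcal A_{\mathrm{drift}}$ requires. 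This follows because $\nabla g_t(X_t)$ is a $\mathbb W^x$-martingale ($\nabla$ commutes with the heat semigroup), so $u_t=\nabla g_t(X_t)/g_t(X_t)$ is a $\mathbb Q^x$-martingale by the Bayes rule.
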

	
	\begin{remark}\label{remq:uniquess_of_Pcont}
		Given $M^x \in \mathcal A_{\mathrm{mart}}(\delta_x,m_x)$, Lemma \ref{lem:energy_entropy_bound}, Theorem \ref{them:fondamental_bijection} and the data processing inequality yield $$\mathcal{C}_{\mathrm{mart}}(M^x) = \mathcal{C}_{\mathrm{drift}}(\Phi^{-1}(M^x)) \geq H(\Law(\Phi^{-1}(M^x))|\mathbb{W}_x) \geq H(m_x|\gamma_x).$$ Moreover, the previous inequality is an equality if and only if $\Law(\Phi^{-1}(M^x))$ is the Schrödinger bridge from $\delta_x$ to $m_x$ and $\Phi^{-1}(M^x)$ is the Föllmer process of this Schrödinger bridge. Therefore $\mathcal{C}_{\mathrm{mart}}(M^x) = H(m_x|\gamma_x)$ if and only if $M^x$ is the Doob martingale associated to the Föllmer drift of the Schrödinger bridge from $\delta_x$ to $m_x$. In particular $\Law(M^x)$ is completely determined in this case. This facts were already established in \cite{ElMi20} (see also \cite{CoFaMi24})
	\end{remark}
	The Föllmer drift and the Föllmer volatility can both be expressed via the backward transition kernel  $q_{1-t}^{x,z} := \Law_{\mathbb{Q}^x}(X_1|X_t = z)$. By Proposition \ref{prop:expression_Follmer_drift} it holds 
	\begin{equation}\label{eq:def-q-main}
		q_{1-t}^{x,z}(dy)
		=\frac{\phi_{1-t}(z-y)\,\rho_x(y)}{\int_{\R^d}\phi_{1-t}(z-\tilde y)\,\rho_x(\tilde y)\,d\tilde y}dy.
	\end{equation}
	and a straightforward computation yields
	\begin{equation}\label{eq:grad-logH-moment}
		u_t^x(z)
		=\frac{1}{1-t}\Big(\E_{q_{1-t}^{x,z}}[Y]-z\Big) \quad \text{and} \quad \sigma_t^x(z) =\frac{1}{1-t}Cov_{q_{1-t}^{x,z}}(Y).
	\end{equation}
	These formulae appear in \cite{ElMi20} too, or in \cite[Appendix A]{BaBeBiLe26v1} with a detailed proof.
	\subsection{Equivalence of \eqref{eq:introP} and \eqref{eq:introP_cont}}
	
	We now rely on the above result on the Föllmer drift of Schrödinger bridges to provide an explicit correspondence between the optimizer of \eqref{eq:introP} and \eqref{eq:introP_cont}. Throughout the whole subsection, we will assume that
	\begin{equation}\label{eq:reinforced_assumption_gaussian}
		H(\nu|\gamma)< + \infty. \tag{H}
	\end{equation}
	\begin{remark}\label{rem:implications_assumptions}
		This is a reinforcement of Assumption \eqref{ass:int_nu_hyperplane}. The second part of the latter follows directly from the fact that $\gamma$ does not charge any hyperplane. For the first part, recall that for any measurable function $\Phi \ge 0$ such that 
		$\int e^{\Phi} \, d\gamma < +\infty$, we have
		\[
		\int \Phi \, d\nu \le H(\nu|\gamma) + \log \int e^{\Phi} \, d\gamma.
		\]
		Taking $\Phi(y) = \alpha |y|^2$ for any $\alpha \in (0,1/2)$, we obtain
		$
		\int e^{\alpha |y|^2} \, \nu(dy) < +\infty
		$.  
		Using Young's inequality $ab \le a^2/2 + b^2/2$ with 
		$a = q/\sqrt{2\alpha}$ and $b = \sqrt{2\alpha}|y|$, we get
		$
		q|y| \le \frac{q^2}{4\alpha} + \alpha |y|^2
		$ 
		and therefore $\int e^{q|y|} \, \nu(dy)
		\le e^{q^2/(4\alpha)} \int e^{\alpha |y|^2} \, \nu(dy)
		< +\infty.$
	\end{remark}
	\subsubsection{Gaussian analogues}
	
	For the continuous time problem, it is more natural to consider the \emph{Gaussian} analogue of \eqref{eq:introP}
	\begin{align}\label{eq:introP_G}\tag{$P^{G}$}
		\inf_{m\in\MT(\mu,\nu)} H(m|\mu.\gamma).
	\end{align}
	Indeed, this problem rewrites as $\inf_{m\in\MT(\mu,\nu)} \int H(m_x|\gamma_x)\mu(dx)$, 
	and Remark \ref{remq:uniquess_of_Pcont} shows that $H(m_x|\gamma_x)$ is related to \eqref{eq:introP_cont} with deterministic start, via the equality
	\begin{equation*}
		H(m_x|\gamma_x) = \inf_{M^x \in \mathcal A_{\mathrm{mart}}(\delta_x,m_x)} C_{\mathrm{mart}}(M^x),
	\end{equation*}
	which we know to be achieved by the Doob martingale of the Föllmer process associated to the Schrödinger bridge from $\delta_x$ to $m_x$. The following lemma proves that the problems \eqref{eq:introP} and \eqref{eq:introP_G} have the same optimizer.
	
	\begin{lemma}\label{lem:relation_gaussian_analogue}
		Assume Assumption \eqref{eq:reinforced_assumption_gaussian} is satisfied.
		For all $m \in \MT(\mu,\nu)$, it holds 
		\begin{equation}\label{eq:link_entropies}
			H(m|\mu \otimes \nu)+ H(\nu|\gamma) = H(m |\mu \cdot \gamma) + \frac{m_2(\mu)}{2}.
		\end{equation}
		In particular, 
		$\Argmin_{m \in \MT(\mu,\nu)} H(m|\mu \otimes \nu) = \Argmin_{m \in \MT(\mu,\nu)} H(m|\mu \cdot \gamma)$ and $$value\eqref{eq:introP} + H(\nu|\gamma) = value\eqref{eq:introP_G} + \frac{m_2(\mu)}{2}.$$
	\end{lemma}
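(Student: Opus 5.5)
The identity \eqref{eq:link_entropies} is a chain-rule computation for densities, after which the claims about argmins and values follow by taking infima. I will first treat the case where $H(m|\mu\otimes\nu)<\infty$, so that $m\ll\mu\otimes\nu$. Since \eqref{eq:reinforced_assumption_gaussian} gives $\nu\ll\gamma$, we then have $m\ll \mu\otimes\gamma$. Moreover $\mu\otimes\gamma$ and $\mu.\gamma$ are mutually absolutely continuous, with
\[
\frac{d(\mu.\gamma)}{d(\mu\otimes\gamma)}(x,y)=\frac{\phi(y-x)}{\phi(y)}=\exp\Big(x\cdot y-\tfrac{|x|^2}{2}\Big),
\]
where $\phi$ is the standard Gaussian density. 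Hence $m$-a.s.
\[
\log\frac{dm}{d(\mu.\gamma)}(x,y)=\log\frac{dm}{d(\mu\otimes\nu)}(x,y)+\log\frac{d\nu}{d\gamma}(y)-x\cdot y+\frac{|x|^2}{2}.
\]

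Next I integrate this identity against $m$. The term $\log\frac{d\nu}{d\gamma}(y)$ integrates to $H(\nu|\gamma)$, because the second marginal of $m$ is $\nu$. The term $|x|^2/2$ integrates to $m_2(\mu)/2$, because the first marginal is $\mu$. For the term $x\cdot y$, I use the martingale property $\int y\,m_x(dy)=x$ to get $\int x\cdot y\,dm=\int|x|^2\,d\mu=m_2(\mu)$. Combining the last two contributions gives $-m_2(\mu)+m_2(\mu)/2=-m_2(\mu)/2$, and rearranging yields \eqref{eq:link_entropies}.

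The main obstacle is justifying that these integrals may be split, i.e.\ that each term is integrable rather than an $\infty-\infty$ expression. The last two terms are in $L^1(m)$ since $\mu,\nu\in\PP_2$ and $|x\cdot y|\le (|x|^2+|y|^2)/2$. The term $\log\frac{d\nu}{d\gamma}(y)$ is in $L^1(m)$: only its law under $\nu$ matters, and the negative part of $\log\frac{d\nu}{d\gamma}$ is $\nu$-integrable (the standard bound $\int(\log f)^-f\,d\gamma\le e^{-1}$). The positive and negative parts of $\log\frac{dm}{d(\mu\otimes\nu)}$ and of $\log\frac{dm}{d(\mu.\gamma)}$ are handled in the same way, via $\int (\log f)^- f\,dR\le e^{-1}$. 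So both relative entropies are well defined in $(-\infty,\infty]$, and the identity holds in $(-\infty,\infty]$ with all other terms finite.

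For the infinite case I argue symmetrically. If $H(m|\mu.\gamma)<\infty$, the same density computation run in the other direction, using $\gamma\ll\nu$ only where needed, shows that $m\ll\mu\otimes\nu$. Indeed, $m\ll \mu\otimes\gamma$, and the $y$-marginal of $m$ is $\nu$, so $m\ll\mu\otimes\nu$ follows by disintegration ($m_x\ll\gamma$ for $\mu$-a.e.\ $x$, and the density of $m$ with respect to $\mu\otimes\nu$ can be written on $\{d\nu/d\gamma>0\}$, which carries $m$). Hence both sides are infinite simultaneously, and the identity holds in all cases. Since the difference between $H(m|\mu\otimes\nu)$ and $H(m|\mu.\gamma)$ is the constant $m_2(\mu)/2-H(\nu|\gamma)$, independent of $m\in\MT(\mu,\nu)$, the two problems have the same argmin. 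Taking infima over $\MT(\mu,\nu)$ then gives the stated relation between the values.
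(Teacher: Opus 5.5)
Your proof is correct and follows essentially the paper's argument: the paper also passes through $\mu\otimes\gamma$, using $\frac{dm}{d(\mu\otimes\gamma)}=\frac{dm}{d(\mu\otimes\nu)}\frac{d\nu}{d\gamma}$, then $\log\frac{d(\mu.\gamma)}{d(\mu\otimes\gamma)}(x,y)=x\cdot y-\frac{|x|^2}{2}$ together with the martingale property, and it handles the degenerate case by the same equivalence $m\ll\mu\otimes\nu \iff m\ll\mu\otimes\gamma$ that you justify via disintegration (you merge the two steps into one and are more explicit about integrability). The aside ``using $\gamma\ll\nu$'' is false in general but plays no role. Note also that your density computation only needs $m\ll\mu\otimes\nu$ rather than $H(m|\mu\otimes\nu)<\infty$, and this is what actually closes the infinite case.
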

	
	\begin{proof}
		Let $m \in \MT(\mu,\nu)$. As $\nu \ll \gamma$, it holds $\mu \otimes \nu \ll \mu \otimes \gamma$ and $\frac{d \mu \otimes \nu}{d \mu \otimes \gamma}(x,y) = \frac{d\nu}{d\gamma}(y)$. Thus, if $m \ll \mu \otimes \nu$, then $\frac{dm}{d \mu \otimes \gamma}(x,y) = \frac{dm}{d \mu \otimes \nu}(x,y) \frac{d\nu}{d\gamma}(y)$. Applying the logarithm and integrating against $m$ yields 
		\begin{equation}\label{eq:change_ref_prod}
			H(m|\mu \otimes \gamma) = H(m|\mu \otimes \nu) + H(\nu|\gamma).  
		\end{equation}
		If $m \not\ll \mu \otimes \nu$, as $m \in \MT(\mu,\nu)$, $m \not\ll \mu \otimes \gamma$, so that the previous equality still holds. Consequently, it holds in all cases. Now, note that $m \ll \mu \otimes \gamma$ if and only if $m \ll \mu \cdot \gamma$. In that case,
		\begin{align*}
			H(m|\mu \otimes \gamma) &= \int \log \left( \frac{dm}{d \mu \cdot \gamma} \right) + \log \left( \frac{d \mu \cdot \gamma}{\mu \otimes \gamma} \right) dm \\ &= H(m|\mu \cdot \gamma) + \int x\cdot y - \frac {x^2} 2 dm(x,y) = H(m |\mu \cdot \gamma) + \frac{m_2(\mu)}{2}.
		\end{align*}
		As this equality also holds when $m \not \ll \mu \otimes \gamma$, together with Equation \eqref{eq:change_ref_prod}, this concludes the proof of Equation \eqref{eq:link_entropies}. The last part of the statement directly follows.
	\end{proof}
	
	\subsubsection{The randomization procedure}
	We now present a conditioning and a randomization procedure for admissible martingales, which will allow us carry our optimization results from $\mathcal A_{\mathrm{mart}}(\delta_x,m_x)$ to $\mathcal A_{\mathrm{mart}}(\mu,\nu).$\\
	
	\textbf{Conditioning.}  
	Any admissible martingale
	$
	\left(\Omega,\mathcal F,\mathbb P,(B_t)_{t \in [0,1]},(M_t)_{t \in [0,1]}\right) \in \mathcal A_{\mathrm{mart}}(\mu,\nu)
	$ 
	induces a measurable family of admissible martingales with deterministic initial condition. More precisely, for $\mu$-a.e.\ $x$, set $m_x = \Law(M_1|M_0 = x)$ and $\mathbb P_x=\mathbb P(\,\cdot\,|M_0=x)$.
	This defines an admissible martingale
	\[
	\left(\Omega,\mathcal F,\mathbb P_x,(B_t)_{t \in [0,1]},(M_t)_{t \in [0,1]}\right)\in \mathcal A_{\mathrm{mart}}(\delta_x,m_x),
	\]
	with dynamics
	$
	M_t = x + \int_0^t \sigma_s\,dB_s
	\, (\mathbb P_x\text{-a.s.})
	$. 
	Moreover, the cost is compatible with this disintegration in the sense that it satisfies
	$
	\mathcal C_{\mathrm{mart}}(M)
	=
	\int_{\R^d} \mathcal C_{\mathrm{mart}}(M^x)\,\mu(dx)
	$.
	
	\textbf{Randomization.}  
	Conversely, assume we have admissible martingales
	\[
	\left(\Omega^x,\mathcal F^x,\mathbb P^x,(B_t^x),(M_t^x)\right)
	\in \mathcal A_{\mathrm{mart}}(\delta_x,m_x), \quad x \in \R^d,
	\]
	such that $\nu = \int m_x \mu(dx)$, with dynamics
	$
	M_t^x = x + \int_0^t \sigma_s^x(B^x)\,dB_s^x
	$. 
	Then one can represent all these martingales on a common canonical space by considering, for each $x$, the law
	\[
	\Gamma^x:=\Law_{\mathbb P^x}(B^x,M^x)
	\]
	on $C([0,1],\R^d)\times C([0,1],\R^d)$, and then randomize them by defining the product measure
	\[
	\bar{\mathbb P}(dx,db,dm):=\mu(dx)\,\Gamma^x(db,dm)
	\]
	on the space $ \Omega := \R^d \times C([0,1],\R^d)\times C([0,1],\R^d)$. The processes $B$ and $M$ defined by
	\[
	B_t(x,b,m)=b_t,\qquad M_t(x,b,m)=m_t,
	\]
	satisfy
	$	M_t = M_0 + \int_0^t \sigma_s\,dB_s
	\, ( \bar{\mathbb P}\text{-a.s.})
	$,	
	where the volatility is given by
	$\sigma_s(x,b,m) = \sigma_s^{x}(b)$. It is straightforward to verify that $M \in \mathcal A_{\mathrm{mart}}(\mu,\nu)$. Moreover, its cost is obtained by aggregating the fiberwise costs:
	$
	\mathcal C_{\mathrm{mart}}(M)
	=
	\int_{\R^d} \mathcal C_{\mathrm{mart}}(M^x)\,\mu(dx)
	$. 
	From now on we will refer to $M$ as the $\mu$-randomization of $(M^x)_{x \in \R^d}.$

	\subsubsection{Correspondence between the optimizers of \eqref{eq:introP_G} and \eqref{eq:introP_cont} }

	The following theorem states that \eqref{eq:introP_G} and \eqref{eq:introP_cont} have the same values and provides an explicit correspondence between their minimizers.
	
	\begin{theorem}
		Assume Assumption \eqref{eq:reinforced_assumption_gaussian} is satisfied.
		\begin{enumerate}
			\item The values of the problems \eqref{eq:introP_G} and \eqref{eq:introP_cont} are equal.
			\item Assume \eqref{eq:introP_G} has finite value and is attained by $m(dx,dy) = \mu(dx)m_x(dy) \in \MT(\mu,\nu)$. For all $x \in \R^d$, let $M^x$ be the Doob martingale of the Föllmer process associated to the Schrödinger bridge from $\delta_x$ to $m_x$, and let $M$ be the $\mu$-randomization of $(M^x)_{x \in \R^d}.$ Then \eqref{eq:introP_cont} is attained by $M$.
			\item If \eqref{eq:introP_cont} has finite value and is attained by $M$, then  \eqref{eq:introP}  is attained by $\Law(M_0,M_1)$. Moreover, the solution of \eqref{eq:introP_cont} is unique in law.
		\end{enumerate}
	\end{theorem}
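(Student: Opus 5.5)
The plan is to reduce everything to the fiberwise identity
\[
H(m_x\mid\gamma_x)=\inf_{M^x\in\mathcal A_{\mathrm{mart}}(\delta_x,m_x)}\mathcal C_{\mathrm{mart}}(M^x),
\]
which holds with attainment exactly by the Föllmer Doob martingale (Remark \ref{remq:uniquess_of_Pcont}). The conditioning and randomization procedures then let us pass between $\mathcal A_{\mathrm{mart}}(\mu,\nu)$ and measurable families in $\mathcal A_{\mathrm{mart}}(\delta_x,m_x)$.

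\textbf{Value inequality ``$\ge$''.} Take any $M\in\mathcal A_{\mathrm{mart}}(\mu,\nu)$ and condition on $M_0=x$. This gives $M^x\in\mathcal A_{\mathrm{mart}}(\delta_x,m_x)$ with $m_x=\Law(M_1\mid M_0=x)$, and $m:=\Law(M_0,M_1)\in\MT(\mu,\nu)$ since $M$ is a true martingale. The volatility condition makes $\sigma-I\in L^2$, so $\int\sigma\,dB$ is square integrable. By Remark \ref{remq:uniquess_of_Pcont},
\[
\mathcal C_{\mathrm{mart}}(M)=\int\mathcal C_{\mathrm{mart}}(M^x)\,\mu(dx)\ge\int H(m_x\mid\gamma_x)\,\mu(dx)=H(m\mid\mu.\gamma)\ge \text{value}\eqref{eq:introP_G}.
\]

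\textbf{Value inequality ``$\le$'' and point (2).} Take any $m\in\MT(\mu,\nu)$ with $H(m\mid\mu.\gamma)<\infty$. Then $H(m_x\mid\gamma_x)<\infty$ for $\mu$-a.e.\ $x$, and Proposition \ref{prop:expression_Follmer_drift} provides $M^x$ with $\mathcal C_{\mathrm{mart}}(M^x)=H(m_x\mid\gamma_x)$. Its $\mu$-randomization $M$ lies in $\mathcal A_{\mathrm{mart}}(\mu,\nu)$, because $\int m_x\,\mu(dx)=\nu$, and it has cost $H(m\mid\mu.\gamma)$. Taking the infimum over $m$ gives equality of values. Applying the same construction to the optimizer $m$ of \eqref{eq:introP_G} yields a process whose cost equals the common value, which proves (2).

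The main obstacle in this step is measurability of $x\mapsto\Gamma^x=\Law(B^x,M^x)$, which the randomization needs. I would settle it using the explicit formulas in Proposition \ref{prop:expression_Follmer_drift}. The kernel $x\mapsto\rho_x=dm_x/d\gamma_x$ can be chosen jointly measurable. Then $u^x_s(z)=\nabla\log(\gamma^{1-s}*\rho_x)(z)$ and $\sigma^x$ depend measurably on $x$. Finally, $\mathbb Q^x$ has density $\rho_x(\omega_1)$ with respect to $\mathbb W^x$, which is measurable in $x$, and $B^x$ and $M^x$ are measurable functionals of the canonical path.

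\textbf{Point (3).} Let $M$ attain \eqref{eq:introP_cont} with finite value, and set $m=\Law(M_0,M_1)$. The chain of inequalities above, combined with equality of values, forces $H(m\mid\mu.\gamma)=\text{value}\eqref{eq:introP_G}$. So $m$ attains \eqref{eq:introP_G}, and by Lemma \ref{lem:relation_gaussian_analogue} it also attains \eqref{eq:introP}.

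For uniqueness in law, note that the optimizer of \eqref{eq:introP} is unique by strict convexity, so $m$ is determined. Equality in the chain also forces $\mathcal C_{\mathrm{mart}}(M^x)=H(m_x\mid\gamma_x)$ for $\mu$-a.e.\ $x$. By Remark \ref{remq:uniquess_of_Pcont}, each $M^x$ is then the Föllmer Doob martingale, whose law is determined by $m_x$. Hence $\Law(M)=\int\Law(M^x)\,\mu(dx)$ is uniquely determined.
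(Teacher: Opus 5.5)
Your proposal is correct and follows the paper's proof essentially step by step. It conditions on $M_0$ and applies the fiberwise bound $\mathcal C_{\mathrm{mart}}(M^x)\ge H(m_x\mid\gamma_x)$ from Remark~\ref{remq:uniquess_of_Pcont} for ``$\ge$''; it uses $\mu$-randomization of the Föllmer Doob martingales for ``$\le$'' and (2); and it uses the sandwich argument plus uniqueness of the optimizer of \eqref{eq:introP} for (3). Your restriction to $m$ with $H(m\mid\mu.\gamma)<\infty$, your measurability argument for $x\mapsto\Gamma^x$, and your explicit appeal to Lemma~\ref{lem:relation_gaussian_analogue} to pass from \eqref{eq:introP_G} to \eqref{eq:introP} fill in points the paper leaves implicit.
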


	\begin{proof}
		\begin{enumerate}
			\item Consider $M \in \mathcal A_{\mathrm{mart}} (\mu,\nu)$ and let  $m = \mu(dx) m_x(dy) \in \MT(\mu,\nu)$ denote the law of $(M_0,M_1)$. In the conditioning procedure of the previous subsection, we constructed admissible martingales $M^x \in \mathcal A (\delta_x, m_x)$, $x \in \R^d$ such that $\mathcal C_{\mathrm{mart}}(M)
			=
			\int_{\R^d} \mathcal C_{\mathrm{mart}}(M^x)\,\mu(dx)$. 
			By Remark \ref{remq:uniquess_of_Pcont}, we know that $C_{\mathrm{mart}}(M^x) \ge H(m_x|\gamma_x)$, so that 
			\begin{equation*}\label{eq:Gaussian_bound}
				\mathcal C_{\mathrm{mart}}(M) \geq \int H(m_x|\gamma_x) \mu(dx) \geq \eqref{eq:introP_G}.
			\end{equation*}
			By taking the infimum over all $M \in \mathcal{A}_{\mathrm{mart}}(\mu,\nu)$, we obtain $\eqref{eq:introP_cont} \geq \eqref{eq:introP_G}$. To prove the converse inequality, let $m(dx,dy) = \mu(dx) m_x(dy) \in \MT(\mu,\nu).$ For all $x \in \R^d$, let $M^x$ be the Doob martingale of the Föllmer process associated to the Schrödinger bridge from $\delta_x$ to $m_x$. In Point \ref{point:expression_martingale} of Theorem \ref{them:fondamental_bijection}, we proved that $M^x \in \mathcal A (\delta_x, m_x)$ and $C_{\mathrm{mart}}(M^x) = H(m_x|\gamma_x).$  The $\mu$-randomization $M$ of $(M^x)_{x \in \R^d}$ satisfies $M \in \mathcal{A}_{\mathrm{mart}}(\mu,\nu)$ and $C_{\mathrm{mart}}(M) = \int C_{\mathrm{mart}}(M^x) \mu(dx),$ so
			\begin{equation}
				\eqref{eq:introP_cont} \leq C_{\mathrm{mart}}(M) = \int C_{\mathrm{mart}}(M^x) \mu(dx) = \int H(m_x|\gamma_x) \mu(dx).
			\end{equation}
			Taking the infimum over $m \in \MT(\mu,\nu)$ yields $\eqref{eq:introP_cont} \leq \eqref{eq:introP_G}$, which concludes.
			\item As seen in the proof of the previous point $ C_{\mathrm{mart}}(M) = \int H(m_x|\gamma_x)\mu(dx).$ As $\int H(m_x|\gamma_x)\mu(dx) = \eqref{eq:introP_G} = \eqref{eq:introP_cont}$, this concludes.
			\item Let $M$ be an optimizer of \eqref{eq:introP_cont} and define $m = \Law(M_0,M_1)$. By Equation \eqref{eq:Gaussian_bound}, it holds
			\begin{equation}\label{eq:sandwich_Gaussian_bound}
				\eqref{eq:introP_G} =\eqref{eq:introP_cont} =  C_{\mathrm{mart}}(M) \geq \int H(m_x|\gamma_x) \mu(dx) \geq \eqref{eq:introP_G}.
			\end{equation}
			Thus $\int H(m_x|\gamma_x) \mu(dx) = \eqref{eq:introP_G}$, which proves the first part of the statement. Regarding uniqueness, by Equation \eqref{eq:sandwich_Gaussian_bound},  $\int C_{\mathrm{mart}}(M^x) \mu(dx) = C_{\mathrm{mart}}(M) =  \int H(m_x|\gamma_x) \mu(dx).$ As $C_{\mathrm{mart}}(M^x) \geq H(m_x|\gamma_x)$, this yields $C_{\mathrm{mart}}(M^x) = H(m_x|\gamma_x)$ for $\mu$-a.e. $x \in \R^d.$ By Remark \ref{remq:uniquess_of_Pcont}, for any such $x$,  $\Law(M^x)$ is characterized as the law of the Doob martingale of the Föllmer process associated to the  Schrödinger bridge from $\delta_x$ to $m_x$. As \eqref{eq:introP} is uniquely attained and $\Law(M) = \int \Law(M^x) \mu(dx)$, this concludes.  \qedhere 
		\end{enumerate}
	\end{proof}

	\section{Behaviour under scaling and connection to filtering}
	\label{sec:filtering}
	
	We extend the discussion given in Part \ref{sec:intro_filter} of the Introduction.  
	Here \(m\in\MT(\mu,\nu)\), we write \(m(dx,dy)=\mu(dx)m_x(dy)\), and let \(\sigma>0\).
	For each \(x\), consider the F\"ollmer process from \(\delta_x\) to \(m_x\) relative to the reference process \(\sigma B\), and denote by \(M^\sigma=(M_t^\sigma)_{t\in[0,1]}\) the corresponding Doob martingale.
	Thus \(M^\sigma\) is the F\"ollmer martingale associated with the coupling \(m\) and the reference volatility \(\sigma\).

	Let \((X,Y)\sim m\), let \(W\) be an independent Brownian motion in \(\R^d\).
	Here \(Y-X\) represents a hidden (static) signal, which one tries to estimate from the observation process
	\[
	R_s:=s(Y-X)+W_s, \qquad s\ge 0.
	\]
	Considering the optimal estimator 
	$
	Z_s:=\E[Y\mid X,(R_r)_{0\le r\le s}]$ for $ s\ge 0$, observe that $Z_0=\mathbb E[Y|X]=X$, that process $Z$ is a martingale, and $Z_{s}\to Y$ since $R_s/s\to Y-X$ as $s\to\infty$.
	We have:

	\begin{theorem}[Scaling and filtering representation]\label{thm:intro_scaling_filter}
		Consider the deterministic time change 
		\( 
		\tau_\sigma(s):=\frac{\sigma^2 s}{1+\sigma^2 s},\) \( s\ge 0.
		\)
		Then \(Z=(Z_s)_{s\ge 0}\) is a continuous Markov martingale, and for every \(\sigma>0\),
		\[
		(M^\sigma_{\tau_\sigma(s)})_{s\ge 0}\stackrel{d}{=}(Z_s)_{s\ge 0}.
		\]
		In particular, the law of the time-changed process \((M^\sigma_{\tau_\sigma(s)})_{s\ge 0}\) does not depend on \(\sigma\).
	\end{theorem}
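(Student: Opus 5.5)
The plan is to work fiberwise in $x$, identify the F\"ollmer martingale $M^\sigma$ as a posterior-mean process, and show that under the time change $\tau_\sigma$ the underlying F\"ollmer process becomes a deterministic transformation of the observation process $R$. By the randomization procedure of Section \ref{sec:continuous}, both $M^\sigma$ and $Z$ are $\mu$-mixtures of their conditional versions given $X=x$ (resp.\ $M_0=x$). It therefore suffices to fix $x$ and to prove, for $Y\sim m_x$, that $(M^{\sigma,x}_{\tau_\sigma(s)})_s\stackrel{d}{=}(E[Y\mid (R_r)_{r\le s}])_s$ with $R_s=s(Y-x)+W_s$.

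\textbf{Step 1: posterior form of the F\"ollmer martingale.} This is the analogue of Proposition \ref{prop:expression_Follmer_drift} with reference $\sigma B$. The Schr\"odinger bridge $\mathbb Q^x$ from $\delta_x$ to $m_x$ has density $\rho(\omega_1)$ with respect to the law of $x+\sigma B$. Hence, under $\mathbb Q^x$, the pair $(X_1,X)$ is distributed as $X_1\sim m_x$ followed by a $\sigma$-Brownian bridge from $x$ to $X_1$. In particular the natural filtration of $X$ coincides with that of $B^x$, since $B^x$ is $X$ minus an adapted drift and $X$ is the strong solution. By the Markov property of the bridge,
\[
\Law(X_1\mid \mathcal F_t)(dy)\propto m_x(dy)\exp\Big(-\tfrac{|y-X_t|^2}{2\sigma^2(1-t)}+\tfrac{|y-x|^2}{2\sigma^2}\Big).
\]
Writing $w=y-x$ and $a=X_t-x$, the exponent equals $w\cdot \frac{a}{\sigma^2(1-t)}-\frac{t}{2\sigma^2(1-t)}|w|^2$, up to terms not depending on $y$.

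\textbf{Step 2: the filter side.} By Girsanov (or Bayes with Gaussian likelihood), $\Law(Y\mid (R_r)_{r\le s})(dy)\propto m_x(dy)\exp\big((y-x)\cdot R_s-\tfrac s2|y-x|^2\big)$. Comparing with Step 1, the two posteriors agree once we set $s=\frac{t}{\sigma^2(1-t)}$, i.e.\ $t=\tau_\sigma(s)$, and
\[
\tilde R_s:=\frac{X_{\tau_\sigma(s)}-x}{\sigma^2(1-\tau_\sigma(s))}=(1+\sigma^2 s)\,\frac{X_{\tau_\sigma(s)}-x}{\sigma^2}.
\]
Thus $M_{\tau_\sigma(s)}=F_s(\tilde R_s)$ and $Z_s=F_s(R_s)$ for one and the same deterministic function $F_s$.

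\textbf{Step 3 (main obstacle): equality in law of $\tilde R$ and $R$ as processes.} I expect this to be the step requiring the most care. Conditionally on $X_1=y$, write $X_t=x+t(y-x)+\sigma\beta_t$ with $\beta$ a standard Brownian bridge, so that
\[
\tilde R_s=s(y-x)+\frac{\beta_{\tau_\sigma(s)}}{\sigma(1-\tau_\sigma(s))}.
\]
By the classical time-inversion representation $\beta_t=(1-t)W'_{t/(1-t)}$, this equals $s(y-x)+\sigma^{-1}W'_{\sigma^2 s}$, and $(\sigma^{-1}W'_{\sigma^2 s})_s$ is a standard Brownian motion. Mixing over $y\sim m_x$ gives $\tilde R\stackrel{d}{=}R$ jointly with $Y$.

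\textbf{Step 4: conclusion.} The map $s\mapsto\tau_\sigma(s)$ is a deterministic bijection $[0,\infty)\to[0,1)$, and $\tilde R$ and $X$ generate the same filtrations up to this time change. Hence the processes $(M_{\tau_\sigma(s)})_s$ and $(Z_s)_s$ have the same law. Continuity and the Markov property of $Z$ then follow from $M_t=f_t(X_t)$ (Proposition \ref{prop:expression_Follmer_drift}) together with the Markov property of $X$ and the injectivity of $f_t$, or directly from $Z_s=F_s(R_s)$ with $(R_s)$ Markov in its own filtration given $X$. Independence of the law from $\sigma$ is immediate, because the law of $Z$ does not involve $\sigma$. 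Endpoint conventions at $s=\infty$ are handled by continuity.
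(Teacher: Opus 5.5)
Your proposal is correct and is essentially the paper's proof: the paper uses the same fiberwise reduction, the same representation $X_t=x+t(Y-x)+\sigma\beta_t$ with $\beta_t=(1-t)\widetilde W_{t/(1-t)}$, and the same identity $R_s=\frac{1+\sigma^2 s}{\sigma^2}\bigl(X_{\tau_\sigma(s)}-x\bigr)$. The difference is that it builds $R$ from the F\"ollmer process on one probability space and concludes directly from the equality of the generated $\sigma$-fields, even obtaining $M^\sigma_{\tau_\sigma(s)}=Z_s$ pathwise, so your posterior computations in Steps 1--2 are not needed for the equality in law.

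One caveat, which the paper's proof also leaves open: your Markov argument is fiberwise. After mixing over $x\sim\mu$, the process $Z$ need not be Markov in its own filtration for a general $m\in\MT(\mu,\nu)$. For example, take $\mu=\frac12(\delta_{-1}+\delta_1)$ and $m_{\pm1}=\mathcal N(\pm1,v_\pm)$ with $v_-\neq v_+$; the posterior variance of $Y$ at time $s$ is then $(v_{Z_0}^{-1}+s)^{-1}$, which depends on $Z_0$ and not only on $Z_s$. For $m=\msb$ the Markov property does hold, because combining your Step 1--2 posterior with the Gibbs form gives an exponential family whose parameter is determined by $Z_s$ alone.
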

	
	\begin{proof}
		Since both constructions are fiberwise in the disintegration \(m(dx,dy)=\mu(dx)m_x(dy)\), it suffices to prove the statement conditionally on \(X=x\), that is, for the case \(\mu=\delta_x\) and terminal law \(m_x\). The general result then follows by integrating over \(x\sim\mu\).
		
		Fix \(x\in\R^d\), let \(Y\sim m_x\), and let \(\widetilde X^\sigma\) be the F\"ollmer process from \(\delta_x\) to \(m_x\) relative to the reference process \(\sigma B\). Its associated Doob martingale is \(M_t^\sigma=\E[Y\mid (\widetilde X_u^\sigma)_{0\le u\le t}]\). Conditionally on \(Y=y\), the process \(\widetilde X^\sigma\) is a Brownian bridge from \(x\) to \(y\) with volatility \(\sigma\). Hence, on a suitable probability space, one may write
		\[
		\widetilde X_t^\sigma=x+t(Y-x)+\sigma \beta_t,\qquad 0\le t<1,
		\]
		where \(\beta\) is a standard Brownian bridge independent of \(Y\). Using the classical representation \(\beta_t=(1-t)\widetilde W_{t/(1-t)}\) for a Brownian motion \(\widetilde W\) in \(\R^d\), this becomes
		\[
		\widetilde X_{\tau_\sigma(s)}^\sigma
		=
		x+\tau_\sigma(s)(Y-x)+\frac{\sigma}{1+\sigma^2 s}\widetilde W_{\sigma^2 s}.
		\]
		
		Now define \(W_s:=\sigma^{-1}\widetilde W_{\sigma^2 s}\), so that \(W\) is again a Brownian motion by Brownian scaling, and set \(R_s:=s(Y-x)+W_s\). Then, for every \(s\ge0\),
		$
		R_s=\frac{1+\sigma^2 s}{\sigma^2}\bigl(\widetilde X_{\tau_\sigma(s)}^\sigma-x\bigr)
		$. 
		Conversely, if \(t<1\) and \(s=t/(\sigma^2(1-t))\), then
		$
		\widetilde X_t^\sigma=x+\sigma^2(1-t)\,R_{\,t/(\sigma^2(1-t))}
		$. 
		Thus the paths \((R_r)_{0\le r\le s}\) and \((\widetilde X_u^\sigma)_{0\le u\le \tau_\sigma(s)}\) determine each other deterministically, and hence generate the same \(\sigma\)-field. Therefore
		\[
		M_{\tau_\sigma(s)}^\sigma
		=
		\E\big[Y\mid (\widetilde X_u^\sigma)_{0\le u\le \tau_\sigma(s)}\big]
		=
		\E\big[Y\mid (R_r)_{0\le r\le s}\big]
		=:Z_s
		\]
		for every \(s\ge0\). In particular,  we have the pathwise identity
		$
		(M^\sigma_{\tau_\sigma(s)})_{s\ge0}=(Z_s)_{s\ge0},\,\text{a.s.} 
		$ \qedhere
	\end{proof}
	
	Thus all F\"ollmer martingales associated with the same coupling \(m\), but different reference volatilities, are in fact one and the same process written in different time scales. The theorem also suggests that the canonical time scale for the F\"ollmer martingale is not the original interval \([0,1]\), but the information time \(s\in[0,\infty)\) of the filtering problem.
	Indeed, since
	$\frac{R_s}{s}=(Y-X)+\frac{W_s}{s},$
	time \(s\) corresponds to observing \(Y-X\) through Gaussian noise of covariance \(s^{-1}I_d\).
	In this sense, \(1/s\) is the effective observation noise variance, and \(s\) is the natural parameter governing the decay of the posterior covariance.

       { We next comment on a certain consistency or self-similarity of sorts for Föllmer martingales:
        \begin{remark}
Let $M$ be the Föllmer martingale relative to $B$, fix $\tau\in(0,1)$ deterministic and set $\mu_\tau:=\Law(M_\tau)$. By Theorem \ref{thm:intro_scaling_filter}, with $s=\tau/(1-\tau)$, Bayes' formula and
Girsanov theorem give
\[
\Law(M_1\mid M_\tau=z)(dy)
=
\exp\bigl(\psi_\tau(y)+h_\tau(z)\cdot y- A_\tau(h_\tau(z))\bigr)\nu(dy),
\]
where
\[
\psi_\tau(y):=\psi(y)-\frac{\tau}{2(1-\tau)}|y|^2,\qquad
A_\tau(\eta):=\log\int e^{\psi_\tau(\tilde{y})+\eta\cdot \tilde{y}}\nu(d\tilde{y}),
\]
and $h_\tau=\nabla A_\tau$. Equivalently,
$\Law(M_\tau,M_1)$ has the Gibbs form \eqref{eq:introGibbs} with marginals $(\mu_\tau,\nu)$.
Hence $\Law(M_\tau,M_1)$ is the martingale Schrödinger bridge from $\mu_\tau$ to
$\nu$, and the restarted process
$
\bar M_u:=M_{\tau+u(1-\tau)},\; u\in[0,1],
$
is the Föllmer martingale from $\mu_\tau$ to $\nu$. Similar statements and formulae hold for $\Law(M_\tau,M_{\tau '})$. 
\end{remark}
}
As a corollary of this remark, we conclude that for Föllmer martingales, the full infinite-dimensional posterior law
		\[
		\Pi_s:=\Law\big(Y\mid X,(R_r)_{0\le r\le s}\big),
		\]
is already determined by the finite-dimensional estimator $ 
		Z_s=\E[Y\mid X,(R_r)_{0\le r\le s}],
		$ 
together with $s$ and the updated explicit expressions for $(\varphi,\psi,h)$. Hence, $\Pi_s$ evolves within an exponential family with finite-dimensional parameter space $\R_+\times \R^d$. We leave it as an open question, whether these properties already characterize Föllmer martingales.

	We close this part with a particular, one-dimensional case, where the Föllmer martingale admits a particularly explicit form:

	\begin{remark}
		\label{prop:bernoulli_follmer_wf}
		Let \(\mu=\delta_{1/2}\) and \(\nu=\frac12\delta_0+\frac12\delta_1\), and let \(M=(M_t)_{t\in[0,1]}\) be the corresponding F\"ollmer martingale.
		Set
		$
		Z_s:=M_{s/(1+s)}, \,s\ge 0
		$. 
		Then there exists a Bernoulli random variable \(Y\) with \(\P(Y=1)=\P(Y=0)=\frac12\) and an independent Brownian motion \(W\) such that, with
		\[
		R_s:=s\Bigl(Y-\frac12\Bigr)+W_s, \qquad s\ge 0,
		\]
		one has
		\[
		Z_s=\E[Y\mid \F_s^R]=\P(Y=1\mid \F_s^R)=\frac{1}{1+e^{-R_s}}, \qquad s\ge 0,
		\]
		where \(\F_s^R:=\sigma(R_r:0\le r\le s)\).
		Moreover, \(Z\) is the unique weak solution of the filtering SDE
		\[
		dZ_s=Z_s(1-Z_s)\,dB_s, \qquad Z_0=\frac12.
		\]
		This is a particular case of the Wonham filter \cite{Wo64}, but the reader can find a self-contained argument in \cite[Proposition 5.2]{BaBeBiLe26v1}. Equivalently, the Föllmer process from $\mu$ to $\nu$ is a mixture of two Brownian bridges, and using Bayes formula and the Girsanov theorem one readily arrives at this SDE for $Z$.
	\end{remark}

	\section{Examples}\label{sec:examples}
	
	
	
	This section presents two examples of the martingale Schr\"odinger problem
	\eqref{eq:introP} and its continuous-time counterpart \eqref{eq:introP_cont},
	and compares them with their analogue Bass-type problems
	\begin{align}
		\label{eq:introP_bass}\tag{P$_B$}
		\inf_{\pi\in \MT(\mu,\nu)}
		\int \mathcal W_2^2(\pi_x,\gamma)\,\mu(dx),\\
		\label{eq:introP_cont_bass}\tag{P$^{cont}_B$}
		\inf_{\substack{M_0\sim\mu,\;M_1\sim\nu,\\
				M_t=M_0+\int_0^t\sigma_s\,dB_s}}
		\E\!\left[\int_0^1|\sigma_t-I|^2\,dt\right].
	\end{align}
	For our first example, we consider the case of Gaussian marginals, and show that the static optimizers of \eqref{eq:introP} and
	\eqref{eq:introP_bass} are equal, while the continuous-time optimizers of
	\eqref{eq:introP_cont} and \eqref{eq:introP_cont_bass} coincide up to
	change of basis and a coordinatewise time change. We then construct an example with three-point marginals, where the static
	optimizers of \eqref{eq:introP} and \eqref{eq:introP_bass} no longer coincide.
	
	
	\subsection{Gaussian Marginals}
	Let
	$
	\mu=\mathcal N(b_0,\Sigma_0), \
	\nu=\mathcal N(b_1,\Sigma_1)
	$ with $\Sigma_0,\Sigma_1\succ0$,
	and assume that $\mu\lc\nu$.
	By \cite[Theorem~6]{Mu01},
	this is equivalent to $b_0=b_1$ and $
	\Delta\coloneq \Sigma_1-\Sigma_0\succeq0.$
	As relative entropy is invariant under translation, we can assume w.l.o.g. that $\mu$ and $\nu$ are centred throughout this subsection. Additionally, we will assume that $\Delta\succ0$, which is necessary for
	\eqref{eq:introP} to have finite value. Indeed, if $\Delta$ were singular,
	then for some $v\ne0$ and every $ m \in\MT(\mu,\nu)$,
	\[
	\Cov_\pi\bigl(v^\top(Y-X)\bigr)=v^\top\Delta v=0,
	\]
	so $\pi$ would be supported on a hyperplane. This is incompatible with
	finite entropy, since then $\pi\ll\mu\otimes\nu$ and $\mu\otimes\nu$ is
	absolutely continuous with respect to Lebesgue measure.
	
	Let $(X,Y)\sim m\in\MT(\mu,\nu)$. Since $m$ is a martingale, its mean and covariance matrix are given by
	\begin{equation*}\label{eq:mean_cov_XY}
		b_{XY}=\binom{0}{0},
		\qquad
		\Sigma_{XY}=
		\begin{pmatrix}
			\Sigma_0 & \Sigma_0\\
			\Sigma_0 & \Sigma_1
		\end{pmatrix}.
	\end{equation*}
	Now, define $m^G := \mathcal N(b_{XY},\Sigma_{XY})$: one easily verifies that $m^G\in\MT(\mu,\nu)$. By the above observation, it is the unique Gaussian measure in $\MT(\mu,\nu)$.
	
    As $\Delta\succ0$, by the block determinant formula,
	$
	\det(\Sigma_{XY})
	=
	\det(\Sigma_0)\det(\Sigma_1-\Sigma_0)
	=
	\det(\Sigma_0)\det(\Delta)>0$, which prove $\Sigma_{XY} \succ 0$. We may therefore apply \cite[Theorem~8.6.5]{CoTh06}, which states that among all probability laws on $\R^{2d}$ with prescribed zero mean and nonsingular covariance matrix, the Gaussian law on $\R^{2d}$ with that same mean vector and covariance matrix uniquely minimizes $H(\pi\mid\mu\otimes\nu)$.\footnote{The statement in \cite[Theorem~8.6.5]{CoTh06} is formulated in terms of differential entropy. However, since the marginals $\mu$ and $\nu$ are fixed, maximizing the joint differential entropy is equivalent to minimizing $H(\pi\mid\mu\otimes\nu)$.}
	This proves that $m^G$ is the martingale Schrödinger bridge between $\mu$ and $\nu$. 
    
    To compute the dual maximizer and the base measure $\bar \mu$, observe that $m^G = \Law(X,X+Z)$ where $X\sim\mathcal N(0,\Sigma_0)$, $
	Z\sim\mathcal N(0,\Delta)$, and $X$ and $Z$ are independent. Thus
	\begin{equation}\label{eq:disintegration_gaussian}
	    m^G(dx,dy)=\mu(dx)\,\gamma_x^\Delta(dy),
	\end{equation}
	where $\gamma_x^\Sigma$ denotes the Gaussian measure $\mathcal N(x,\Sigma)$ and $\phi_x^\Sigma$ its density. Hence
	\[
	\frac{dm}{d(\mu\otimes\nu)}(x,y)
	=\frac{\phi_x^\Delta(y)}{\phi_0^{\Sigma_1}(y)} = \left(\frac{\det\Sigma_1}{\det\Delta}\right)^{1/2}
	\exp\!\left(
	-\frac12 (y-x)^\top\Delta^{-1}(y-x)
	+\frac12 y^\top\Sigma_1^{-1}y
	\right),
	\]
	and a straightforward computation yields 
	gives
	$
	H(m\mid\mu\otimes\nu)
	=2^{-1}\log ( \det\Sigma_1 (\det\Delta)^{-1}).
	$
	Comparing with the dual form \eqref{eq:density_mart_opt}, we obtain (up to an affine shift):
    $$\bar\varphi(\bar x)=-\frac12 \bar x^\top\Delta \bar x
		+\frac12\log\frac{\det\Sigma_1}{\det\Delta}, \qquad \psi(y)=\frac12 y^\top(\Sigma_1^{-1}-\Delta^{-1})y,$$
    and $h(x)=\Delta^{-1}x$. Immediately, $\bar \mu = h_\# \mu = \mathcal N(0,\Delta^{-1} \Sigma_0 \Delta^{-1}).$ 	
	\begin{proposition}\label{pro:comupation_gaussian_Bass}\hfill
		\begin{enumerate}
			\item The jointly Gaussian martingale coupling $m^G$ is also the minimizer of \eqref{eq:introP_bass}.
			\item The optimizer $M^{\mathrm B}$ of \eqref{eq:introP_cont_bass} admits a stochastic integral representation
			\begin{equation} \label{eq:M^B_t}
				M_t^{\mathrm B}=M_0^{\mathrm B}+\Delta^{1/2}W_t,
				\qquad t\in[0,1],
			\end{equation}
			where $W$ is a standard Brownian motion independent of $M_0^B$. In particular, $M^{\mathrm{B}}$ is Gaussian.
		\end{enumerate}
	\end{proposition}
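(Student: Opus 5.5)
The plan is to reduce both problems to maximizing a cross-covariance trace, and then to bound that trace by $\tr(\Delta^{1/2})$ uniformly over all admissible objects. The bound comes from the positive semidefiniteness of a joint covariance matrix. Throughout, the key observation is that for every $m\in\MT(\mu,\nu)$ with $(X,Y)\sim m$ we have $\E[X(Y-X)^\top]=0$. Hence $\Cov(Y-X)=\Sigma_1-\Sigma_0=\Delta$, regardless of $m$, and the analogous statement $\Cov(M_1-M_0)=\Delta$ holds for every admissible continuous martingale.

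\emph{Part (1).} For $\pi\in\MT(\mu,\nu)$, expand
\[
\mathcal W_2^2(\pi_x,\gamma)=m_2(\pi_x)+d-2\,\MCov(\pi_x,\gamma).
\]
Since $\int m_2(\pi_x)\mu(dx)=m_2(\nu)$ is fixed, \eqref{eq:introP_bass} is equivalent to maximizing $\int\MCov(\pi_x,\gamma)\mu(dx)$. Choose measurably, for each $x$, an optimal coupling of $\pi_x$ with $\gamma$. This produces $(X,Y,G)$ with $G\sim\gamma$ and $\int\MCov(\pi_x,\gamma)\mu(dx)=\E[(Y-X)\cdot G]$, using that $\pi_x$ has barycentre $x$ so that $\E[X\cdot G\mid X]=0$. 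Set $C:=\E[(Y-X)G^\top]$. The joint covariance of $(Y-X,G)$ is $\begin{pmatrix}\Delta & C\\ C^\top & I\end{pmatrix}\succeq0$, so $C=\Delta^{1/2}K$ with $\|K\|_{op}\le1$. By von Neumann's trace inequality, $\tr C\le\tr\Delta^{1/2}$. For $m^G$ we have $\pi_x=\mathcal N(x,\Delta)$ by \eqref{eq:disintegration_gaussian}, and the Brenier coupling $G=\Delta^{-1/2}(Y-x)$ attains $\tr\Delta^{1/2}$. Hence $m^G$ is optimal. Uniqueness of the Bass optimizer is then taken from the existing Bass theory, or read off from the equality case below.

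\emph{Part (2).} For an admissible $M_t=M_0+\int_0^t\sigma_s\,dB_s$, Itô's isometry gives
\[
\E\int_0^1|\sigma_t|^2dt=\E|M_1-M_0|^2=\tr\Delta,
\]
which is fixed. Therefore the cost equals $\tr\Delta+d-2\,\E\int_0^1\tr\sigma_t\,dt$, and we must maximize the last term. By Itô's isometry again, $\E\int_0^1\tr\sigma_t\,dt=\tr\E[(M_1-M_0)B_1^\top]$. Here $M_0$ is independent of $B$, $\Cov(M_1-M_0)=\Delta$ and $\Cov(B_1)=I$, so the same PSD argument gives the upper bound $\tr\Delta^{1/2}$. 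The choice $\sigma\equiv\Delta^{1/2}$ is admissible, since $M_0+\Delta^{1/2}B_1\sim\mathcal N(0,\Sigma_0+\Delta)=\nu$, and it attains the bound. This gives existence of the optimizer \eqref{eq:M^B_t}.

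For the uniqueness claim in (2), the main step is the equality case. Suppose $\tr C=\tr\Delta^{1/2}$ with $C=\E[(M_1-M_0)B_1^\top]$. Then
\[
\E\bigl|M_1-M_0-\Delta^{1/2}B_1\bigr|^2=\tr\Delta+\tr\Delta-2\tr(\Delta^{1/2}C^\top)
\]
must vanish. To show this I would use the equality case of von Neumann's inequality with $\Delta\succ0$, which forces $K=I$ and hence $C=\Delta^{1/2}$. Consequently $M_1-M_0=\Delta^{1/2}B_1$ almost surely. Taking conditional expectations with respect to the filtration generated by $M_0$ and $B$ then yields $M_t=M_0+\Delta^{1/2}B_t$ for all $t$. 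Since $M_0$ is independent of $B$, the optimizer is Gaussian, as claimed.
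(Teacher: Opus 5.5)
Your proof is correct, but it takes a different route from the paper in both parts.

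\textbf{Part (1).} The paper argues fiberwise. Gelbrich's inequality gives $\mathcal W_2^2(m_x,\gamma)\ge |x|^2+\tr S(x)+d-2\tr(S(x)^{1/2})$ with $S(x)=\Cov_{m_x}(Y)$. Jensen's inequality for the concave map $A\mapsto\tr(A^{1/2})$, together with $\int S\,d\mu=\Delta$, then yields the bound. You instead glue fiberwise optimal couplings into a single triple $(X,Y,G)$. You then apply one Schur-complement/trace bound to the aggregated cross-covariance $\E[(Y-X)G^\top]$. The Jensen step is absorbed into the aggregation, since you only use $\Cov(Y-X)=\Delta$. The price is a standard measurable selection.

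\textbf{Part (2).} The paper builds $M^{\mathrm B}$ by the Bass construction $M^x_t=\E[T_x(W_1)\mid W_t]$ with $T_x(z)=x+\Delta^{1/2}z$. It then invokes the static-to-dynamic correspondence \cite[Theorem~2.2]{BaBeHuKa20}, together with uniqueness for \eqref{eq:introP_bass}, to get optimality and uniqueness in law. Your It\^o-isometry argument bypasses both part (1) and that theorem. The same covariance bound gives the lower bound directly on path space. Its equality case ($K=I$, hence $M_1-M_0=\Delta^{1/2}B_1$) gives uniqueness in law by conditioning on the filtration.

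\textbf{Comparison.} The paper's route places the example inside the general Bass machinery: the optimizer is the Bass martingale built from Brenier maps of the static optimizer. Yours is elementary and self-contained. It relies on the Gaussian case being purely second-order: the bound depends only on $\Delta$ and is attained because the Gaussian martingale coupling is feasible.

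\textbf{One precision.} Reading off uniqueness in (1) from an equality case works, but the static version needs one extra observation. By construction $G$ is independent of $X$, since its conditional law given $X=x$ is $\gamma$. Hence $Y=X+\Delta^{1/2}G$ forces $\pi_x=\mathcal N(x,\Delta)$ for $\mu$-a.e.\ $x$. The paper's proof of (1) does not argue this uniqueness; it later uses it as known.
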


	\begin{proof}
		Let $(X,Y) \sim m \in\MT(\mu,\nu)$ and $
		S(x)=\Cov_{m_x}(Y)$. 
		Since $m$ is a martingale coupling, $m_x$ has mean $x$ for $\mu$-a.e.\ $x$, and by the law of total covariance,
		\[
		\int S(x)\,\mu(dx)=\Sigma_1-\Sigma_0=\Delta.
		\]
		Using Gelbrich's inequality (\cite[Theorem~2.1]{Ge90}), we have
		\[
		W_2^2(m_x,\gamma)\ge W_2^2(\mathcal N(x,S(x)),\gamma)
		=|x|^2+\tr(S(x))+d-2\tr(S(x)^{1/2}).
		\]
		Integrating and using Jensen's inequality together with the concavity of the map $A\mapsto \tr(A^{1/2})$ on positive semidefinite matrices (see \cite[Theorem~4.2.3]{Bh07}) and Equation \eqref{eq:disintegration_gaussian}, we obtain 
		\[
		\int W_2^2(m_x,\gamma)\,\mu(dx)
		\ge
		\int \Bigl(|x|^2+\tr(\Delta)+d-2\tr(\Delta^{1/2})\Bigr)\,\mu(dx)=\int W_2^2(m_x^G,\gamma)\,\mu(dx).
		\]
		Thus, $m^G$ is also the minimizer of \eqref{eq:introP_bass}.
		
		We now construct the optimizer of \eqref{eq:introP_cont_bass} using the Bass construction, see \cite[Remark~2.3]{BaBeHuKa20}. For each $x\in\R^d$, define
		\[
		T_x(z):=x+\Delta^{1/2}z,
		\qquad z\in\R^d.
		\]
		Then $T_x$ is the gradient of a convex function such that $(T_x)_\#\gamma=m_x^G$.
		Let $W=(W_t)_{t\in[0,1]}$ be a standard Brownian motion started at $0$. For each $x\in\R^d$, define
		\[
		M_t^x:=\E[T_x(W_1)\mid W_t].
		\]
		Since $T_x$ is affine, we obtain
		\[
		M_t^x
		=
		\E[x+\Delta^{1/2}W_1\mid W_t]
		=
		x+\Delta^{1/2}W_t,
		\qquad t\in[0,1].
		\]
		Now let $X\sim\mu$ be independent of $W$, and define
		\[
		M^{\mathrm{B}}_t:=M_t^X=X+\Delta^{1/2}W_t,
		\qquad t\in[0,1].
		\]
		Then $M^{\mathrm{B}}$ is a Gaussian martingale, with
		$M^{\mathrm{B}}_0=X\sim\mu$. Since $m^G$ is the unique optimizer of \eqref{eq:introP_bass}, \cite[Theorem~2.2]{BaBeHuKa20} implies that $M^{\mathrm{B}}$ is the unique-in-law optimizer of \eqref{eq:introP_cont_bass}.
	\end{proof}

	Let $M^{\mathrm E}$ denote the optimizer of \eqref{eq:introP_cont}. We next show that it is Gaussian as well.
	
	\begin{proposition}\label{pro:computation_gauss_Föllmer}
        The Föllmer martingale $M^{\mathrm E}$ admits the stochastic integral representation
		\begin{equation}\label{eq:M^E_t}
			M_t^{\mathrm E}
			=
			M_0^{\mathrm E}
			+\int_0^t \Delta\bigl((1-s)I+s\Delta\bigr)^{-1}\,dW_s,
		\end{equation}
        where $(W_t)_{t \in [0,1]}$ stands for a standard Brownian motion independent of $M_0^{\mathrm E}$. In particular, $M^{\mathrm E}$ is a Gaussian process.
	\end{proposition}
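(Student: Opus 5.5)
The plan is to use the fiberwise description of the optimizer of \eqref{eq:introP_cont} and then compute the Föllmer volatility explicitly, which is possible because everything is Gaussian.

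First, I check that the continuous-time theory applies. Since $\Sigma_1\succ0$, we have $H(\nu|\gamma)<+\infty$, so Assumption \eqref{eq:reinforced_assumption_gaussian} holds. We showed above that $m^G$ is the martingale Schrödinger bridge, with disintegration $m^G_x=\gamma^\Delta_x$ by \eqref{eq:disintegration_gaussian}. By Lemma \ref{lem:relation_gaussian_analogue}, $m^G$ therefore also attains \eqref{eq:introP_G}, and its value is finite. By part (2) of the theorem relating \eqref{eq:introP_G} and \eqref{eq:introP_cont}, the $\mu$-randomization of the Föllmer martingales $M^x$ from $\delta_x$ to $\gamma^\Delta_x$ attains \eqref{eq:introP_cont}. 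By part (3), this optimizer is unique in law. So it suffices to show that each $M^x$ has the dynamics $dM^x_t=\Delta((1-t)I+t\Delta)^{-1}dB^x_t$ with a deterministic volatility that does not depend on $x$.

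To compute the volatility, I use Proposition \ref{prop:expression_Follmer_drift} together with \eqref{eq:def-q-main} and \eqref{eq:grad-logH-moment}, which give $\sigma^x_t(z)=\frac{1}{1-t}\Cov_{q^{x,z}_{1-t}}(Y)$. Here $\rho_x=\phi^\Delta_x/\phi^I_x$, so
\[
q^{x,z}_{1-t}(dy)\propto \exp\Bigl(-\tfrac{|z-y|^2}{2(1-t)}-\tfrac12 (y-x)^\top\Delta^{-1}(y-x)+\tfrac12|y-x|^2\Bigr)dy .
\]
This is a Gaussian in $y$ with precision matrix $\Delta^{-1}+\frac{t}{1-t}I$, whose inverse is $(1-t)\Delta\bigl((1-t)I+t\Delta\bigr)^{-1}$. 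The matrices $\Delta$ and $I$ commute, so this inverse is symmetric and positive definite. Consequently
\[
\sigma^x_t=\Delta\bigl((1-t)I+t\Delta\bigr)^{-1},
\]
which is deterministic and independent of both $z$ and $x$.

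As a cross-check, the same formula comes from the Brownian bridge representation used in the proof of Theorem \ref{thm:intro_scaling_filter}. There $(X_t-x)/t=(Y-x)+$ noise of covariance $\frac{1-t}{t}I$, and the standard Gaussian posterior computation gives the same covariance.

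To conclude, since the volatility is deterministic and $x$-independent, the randomization gives $M_t=M_0+\int_0^t\Delta((1-s)I+s\Delta)^{-1}dB_s$. By construction of the randomization, $B$ is a Brownian motion independent of $M_0$. This is exactly \eqref{eq:M^E_t} with $W=B$. A Wiener integral of a deterministic integrand is Gaussian and independent of the Gaussian $M_0$, so $M^{\mathrm E}$ is a Gaussian process. A consistency check is that $\int_0^1\sigma_s^2\,ds=\Delta$, which follows from $\frac{d}{ds}\bigl[s\Delta((1-s)I+s\Delta)^{-1}\bigr]=\Delta^2((1-s)I+s\Delta)^{-2}$; hence $M_1\sim\mathcal N(0,\Sigma_0+\Delta)=\nu$.

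The main obstacle I expect is only the bookkeeping in the Gaussian product: correctly incorporating the factor $1/\phi^I_x$ from $\rho_x=dm_x/d\gamma_x$, which produces the $+\frac12|y-x|^2$ term. Forgetting it would give the wrong precision $\Delta^{-1}+\frac{1}{1-t}I$. The finite-cost and uniqueness issues are already handled by the earlier theorem.
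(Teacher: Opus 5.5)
Your proposal is correct and follows essentially the paper's proof: write $\rho_x=\phi^\Delta_x/\phi^I_x$, recognise $q^{x,z}_{1-t}$ as a Gaussian with precision $\Delta^{-1}+\frac{t}{1-t}I$, and read off the deterministic volatility $\Delta\bigl((1-t)I+t\Delta\bigr)^{-1}$ via \eqref{eq:grad-logH-moment}; your extra step identifying $M^{\mathrm E}$ as the $\mu$-randomization, using finiteness of the value and uniqueness in law, is a welcome justification that the paper leaves implicit. One slip in your non-essential consistency check: the antiderivative of $\Delta^2\bigl((1-s)I+s\Delta\bigr)^{-2}$ is $s\Delta^2\bigl((1-s)I+s\Delta\bigr)^{-1}$, not $s\Delta\bigl((1-s)I+s\Delta\bigr)^{-1}$ (the latter equals $I$ at $s=1$), although your conclusion $\int_0^1\sigma_s^2\,ds=\Delta$ is right.
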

	
	\begin{proof}
		Fix $x\in\R^d$. By Equation \eqref{eq:density_mart_opt}, it holds
		\[
		\frac{dm^G_x}{d\gamma_x}(y) = \frac{d\gamma_x^{\Delta}}{d \gamma_x^I}(y)
		=
		\frac{1}{\sqrt{\det\Delta}}
		\exp\!\left(
		-\frac12 (y-x)^\top(\Delta^{-1}-I)(y-x)
		\right).
		\]
		Therefore, the conditional law $q_{1-t}^{x,z}$ defined in \eqref{eq:def-q-main} is given by
		\[
		q_{1-t}^{x,z}(dy)
		=\frac{1}{Z_{t,x,z}}
		\exp\!\left(
		-\frac{1}{2(1-t)}|y-z|^2
		-\frac12 (y-x)^\top(\Delta^{-1}-I)(y-x)
		\right)\,dy,
		\]
		where $Z_{t,x,z}$ is the normalizing constant.
		Thus $q_{1-t}^{x,z}$ is a Gaussian measure with covariance 
		\[
		\Cov_{q_{1-t}^{x,z}}(Y)
		=
		\left(\Delta^{-1}+\frac{t}{1-t}I\right)^{-1}
		=
		(1-t)\Delta\bigl((1-t)I+t\Delta\bigr)^{-1}.
		\]
		By Equation \eqref{eq:explicit_M_via_X} and Equation \eqref{eq:grad-logH-moment}, the volatility $\sigma_t^x$ of $M^{\mathrm{E}}$ conditioned by $M_0^{\mathrm E} = x$ satisfies 
        \begin{equation*}
        \sigma^{x}_t(z)
		=
		\frac{1}{1-t}\Cov_{q_{1-t}^{x,z}}(Y)
		=
		\Delta\bigl((1-t)I+t\Delta\bigr)^{-1},
        \end{equation*}
        which concludes.
	\end{proof} 
	We now prove that, up to a change of basis and a coordinatewise time change, the processes $M^\mathrm{E}$ and $M^\mathrm{B}$ coincide in our Gaussian setting. In the following, given a $\R^d$-valued process $(X_t)_{t \in [0,1]} = ((X^1_t,\dots,X^d_t))_{t\in [0,1]}$ and a $[0,1]^d$-valued map $\tau = (\tau_1,\dots,\tau_d)$, we write $X_{\tau(s)} := \left(X_{\tau^1(s)}^1,\dots,X_{\tau^d(s)}^d\right)$.
    
	\begin{proposition}
		Let
		$
		\Delta=U\,\diag(\lambda_1,\dots,\lambda_d)\,U^\top
		$ be a spectral decomposition of $\Delta$, and let $\tilde M^{\mathrm E} := U^\top M^{\mathrm E}$ and $\tilde M^{\mathrm B} := U^\top M^{\mathrm B}$ denote the processes $ M^{\mathrm E}$ and $ M^{\mathrm B}$ in the spectral basis.
        Then
        \[
		\left(\tilde M_t^{\mathrm E}\right)_{t\in[0,1]}
		\stackrel{d}{=}
		\left(\tilde M_{\tau(t)}^{\mathrm B}\right)_{t \in[0,1]},
		\]
        where $\tau$ is the time-change defined by
        $$\tau(t) = \left(\frac{t \lambda_1}{1-t+t \lambda_1},\dots, \frac{t \lambda_d}{1-t+t \lambda_d}\right), \qquad t \in [0,1].$$
	\end{proposition}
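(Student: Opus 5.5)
We work in the spectral basis throughout. Conjugating the explicit representations of Proposition \ref{pro:comupation_gaussian_Bass} and Proposition \ref{pro:computation_gauss_Föllmer} by $U$ diagonalises them, and the claim then reduces to a one-dimensional statement about time-changed Brownian motions, coordinate by coordinate.

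\textbf{Step 1: diagonalise both processes.} Set $\tilde W := U^\top W$, which is again a standard Brownian motion independent of the initial value. Since $U$ is orthogonal,
\[
U^\top \Delta\bigl((1-s)I+s\Delta\bigr)^{-1}U=\diag\Bigl(\tfrac{\lambda_i}{1-s+s\lambda_i}\Bigr)_{i}
\qquad\text{and}\qquad
U^\top\Delta^{1/2}U=\diag(\lambda_i^{1/2}).
\]
Applying this to \eqref{eq:M^E_t} and \eqref{eq:M^B_t} gives, for each coordinate $i$,
\[
\tilde M^{\mathrm E,i}_t=\tilde M^{\mathrm E,i}_0+\int_0^t \frac{\lambda_i}{1-s+s\lambda_i}\,d\tilde W^i_s,
\qquad
\tilde M^{\mathrm B,i}_t=\tilde M^{\mathrm B,i}_0+\lambda_i^{1/2}\tilde W^i_t .
\]
Both initial values have law $U^\top_\#\mu$, and each is independent of its driving Brownian motion. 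The components $\tilde W^1,\dots,\tilde W^d$ are independent. Hence in both cases, conditionally on the initial value, the coordinate increment processes are independent centred Gaussian processes.

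\textbf{Step 2: compare covariances.} Conditionally on $M_0=x$, both processes are Gaussian with independent increments. It is therefore enough to match the deterministic quadratic variations coordinatewise:
\[
\langle \tilde M^{\mathrm E,i}\rangle_t=\lambda_i^2\int_0^t\frac{ds}{(1-s+s\lambda_i)^2}
\qquad\text{and}\qquad
\langle \tilde M^{\mathrm B,i}\rangle_{\tau_i(t)}=\lambda_i\,\tau_i(t).
\]
A direct antiderivative computation shows that
\[
\int_0^t\frac{ds}{(1-s+s\lambda)^2}=\frac{1}{\lambda-1}\Bigl(1-\frac{1}{1-t+t\lambda}\Bigr)=\frac{t}{1-t+t\lambda}.
\]
The degenerate case $\lambda=1$ can be checked separately. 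The first expression therefore equals $\lambda_i\cdot\frac{t\lambda_i}{1-t+t\lambda_i}=\lambda_i\tau_i(t)$, as required. Because $\tau_i$ is deterministic and increasing, each time-changed coordinate $\tilde M^{\mathrm B,i}_{\tau_i(\cdot)}$ is again a continuous Gaussian process with independent increments and this variance function. The coordinates also remain independent given the start.

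\textbf{Step 3: conclude.} Given $M_0$, both finite-dimensional distributions are centred Gaussian. They have block-diagonal covariance $\operatorname{Cov}(X^i_s,X^i_t)=v_i(s\wedge t)$ with the same function $v_i$. Integrating over the common initial law $U^\top_\#\mu$, together with the independence from the noise, gives equality in law of the whole processes.

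The only potential obstacle is bookkeeping. Independence between coordinates has to survive the coordinatewise time change. This holds because independence comes from the independence of the $\tilde W^i$ and is not affected by reparametrising each coordinate separately. Everything else is a one-line integral.
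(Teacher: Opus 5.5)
Your proof is correct and follows essentially the same route as the paper: you pass to the spectral basis and match the increment covariances $\frac{t\lambda_i^2}{1-t+t\lambda_i}$ coordinatewise, using the same elementary integral. The only cosmetic difference is that you condition on the initial value, so Gaussianity of $\mu$ is never used, whereas the paper treats both processes as jointly Gaussian martingales and invokes $\Cov(N_0^{\mathrm E})=\Cov(N_0^{\mathrm B})=U^\top\Sigma_0U$.
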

	
	\begin{proof}
		Set $N_t^B= \tilde M_{\tau(t)}^B$ and
		$N_t^E= \tilde M_t^E$. By Propositions \ref{pro:comupation_gaussian_Bass} and \ref{pro:computation_gauss_Föllmer}, $N^E$ and $N^B$ are both centred Gaussian martingales. Thus, by the martingale property, both mean functions are constant 
        equal to $U^\top \int x \mu(dx)$, and for $i \in \{ B,E \}$ the covariance function $C_i$ of $N^i$ satisfies
        $$C_i(s,t) = \Cov(N_s^i - N_0^i) + \Cov(N_0^i), \qquad \text{for all }  s<t.$$
        As $\Cov(N_0^E)=\Cov(N_0^B) = U^\top \Sigma_0 U$, our statement boils down to prove $\Cov(N_s^B - N_0^B) = \Cov(N_s^E - N_0^E)$. For the left-hand side, as $\Delta^{1/2} = U \diag(\sqrt{\lambda_1},\dots, \sqrt{\lambda_d}) U^\top$ and orthonormal transformations of Brownian motion are Brownian, by Equation \eqref{eq:M^B_t}, it holds $$\Cov(N_t^B- N_0^B) = \Cov\left(\diag(\sqrt{\lambda_1},\dots, \sqrt{\lambda_d}) U^\top W_{\tau(t)}\right) = \diag\!\left(
		\frac{t\lambda_1^2}{1-t+t\lambda_1},
		\dots,
		\frac{t\lambda_d^2}{1-t+t\lambda_d}
		\right).$$
        For the right-hand side, Equation \eqref{eq:M^E_t} together with the Itô isometry and a direct computation yields
        \begin{align*}
            \Cov(N_t^{\mathrm E}-&N_0^{\mathrm E}) = U^\top \left[\int_0^t \Delta^2\bigl((1-s)I+s\Delta\bigr)^{-2} ds\right] U\\
            &=
		U^\top\left[t\,\Delta^2\left((1-t)I+t\Delta\right)^{-1}\right]U
		=
		\diag\!\left(
		\frac{t\lambda_1^2}{1-t+t\lambda_1},
		\dots,
		\frac{t\lambda_d^2}{1-t+t\lambda_d}
		\right),
        \end{align*}
        which concludes.
	\end{proof}
	
	\subsection{Three-Point Discrete Marginals}
	
	We next consider the case where both marginals are supported on three points. In contrast with the Gaussian case, the minimizers of \eqref{eq:introP} and \eqref{eq:introP_bass} no longer coincide. We first parametrize the set $\MT(\mu,\nu)$ for general weights and show that these minimizers are characterized by different systems of equations. We then compute both minimizers numerically for explicit choices of weights, confirming that they are indeed distinct, although remarkably close. Let
	\[
	\mu=p_1\delta_{-1}+q_1\delta_0+r_1\delta_{1},
	\qquad
	\nu=p_2\delta_{-2}+q_2\delta_0+r_2\delta_{2},
	\]
	where
	$
	r_1=1-p_1-q_1,
	\;
	r_2=1-p_2-q_2,
	$
	and assume throughout that all atoms have positive mass. We identify any coupling
	$\pi\in\MT(\mu,\nu)$
	with its $3\times 3$ matrix $(\pi_{x,y})$, whose rows are indexed by
	$x\in\{-1,0,1\}$ and columns by $y\in\{-2,0,2\}$.
	A convenient parametrization is obtained by setting
	\[
	u\coloneq \pi_{-1,-2},
	\qquad
	v\coloneq \pi_{0,-2},
	\qquad
	w\coloneq \pi_{1,-2}=p_2-u-v.
	\]
	Imposing the marginal constraints together with the martingale constraints, one finds that every admissible martingale coupling is necessarily of the form
	\begin{equation}\label{eq:pi_uv}
		\pi(u,v)=
		\begin{pmatrix}
			u & \frac{3p_1}{2}-2u & u-\frac{p_1}{2}\\[4pt]
			v & q_1-2v & v\\[4pt]
			w & \frac{r_1}{2}-2w & w+\frac{r_1}{2}
		\end{pmatrix},
		\qquad
		w=p_2-u-v.
	\end{equation}
	The non-negativity constraints $\pi_{x,y}\ge 0$ are equivalent to
	\[
	\frac{p_1}{2}\le u\le \frac{3p_1}{4},
	\qquad
	0\le v\le \frac{q_1}{2},
	\qquad
	p_2-\frac{r_1}{4}\le u+v\le p_2.
	\]
	We denote by $\mathcal S\subset\R^2$ the corresponding feasible polygon in the $(u,v)$-plane. Hence both optimization problems reduce to minimization over the compact convex set $\mathcal S$.

	We first consider the martingale Schr\"odinger problem \eqref{eq:introP}. Since $\mu$ and $\nu$ have full support, every $\pi\in\MT(\mu,\nu)$ satisfies $\pi\ll\mu\otimes\nu$, and therefore
	$
	H(\pi\mid \mu\otimes\nu)<\infty.
	$
	Minimizing
	\[
	H(\pi\mid \mu\otimes\nu)
	=
	\sum_{x,y}\pi_{x,y}\log\!\Big(\frac{\pi_{x,y}}{\mu_x\nu_y}\Big)
	\]
	over $\MT(\mu,\nu)$ is equivalent to minimizing
	$
	J(\pi)\coloneq \sum_{x,y}\pi_{x,y}\log \pi_{x,y}.
	$
	Accordingly, using the parametrization $\pi=\pi(u,v)$ from \eqref{eq:pi_uv}, we define
	$
	F(u,v)\coloneq J(\pi(u,v)),
	\; (u,v)\in\mathcal S.
	$
	Since $a\mapsto a\log a$ is strictly convex on $(0,\infty)$ and $(u,v)\mapsto \pi(u,v)$ is affine, the function $F$ is strictly convex on $\mathcal S$. Therefore the entropy minimizer is unique.
	
	Under the irreducibility assumptions on $(\mu,\nu)$, 
	the optimizer $m^E$ admits the exponential representation
	\eqref{eq:density_mart_opt}, and in particular all its entries are strictly positive. Equivalently, if
	$
	m^E=\pi(u^E,v^E),
	$
	then $(u^E,v^E)$ lies in the interior of $\mathcal S$, so it is characterized by the first-order condition
	$
	\nabla F(u^E,v^E)=0.
	$
	A direct computation yields the system
	\begin{equation}\label{eq:entropy_system}
		\left\{
		\begin{aligned}
			u(2u-p_1)\,(r_1-4w)^2
			&=
			(3p_1-4u)^2\,w\,(r_1+2w),\\[4pt]
			v^2\,(r_1-4w)^2
			&=
			2\,(q_1-2v)^2\,w\,(r_1+2w),
		\end{aligned}
		\right.
		\qquad w=p_2-u-v.
	\end{equation}
	
	We next turn to the Bass problem \eqref{eq:introP_bass}. Denoting its optimizer by
	$
	m^B=\pi(u^B,v^B),
	$
	the corresponding first-order conditions are
	\begin{equation}\label{eq:bass_system}
		\left\{
		\begin{aligned}
			\Phi^{-1}\!\Big(\frac{u}{p_1}\Big)-\Phi^{-1}\!\Big(\frac32-\frac{u}{p_1}\Big)
			&=
			\Phi^{-1}\!\Big(\frac{w}{r_1}\Big)-\Phi^{-1}\!\Big(\frac12-\frac{w}{r_1}\Big),\\[6pt]
			\Phi^{-1}\!\Big(\frac{v}{q_1}\Big)-\Phi^{-1}\!\Big(1-\frac{v}{q_1}\Big)
			&=
			\Phi^{-1}\!\Big(\frac{w}{r_1}\Big)-\Phi^{-1}\!\Big(\frac12-\frac{w}{r_1}\Big),
		\end{aligned}
		\right.
		\qquad w=p_2-u-v,
	\end{equation}
	where $\Phi^{-1}$ denotes the standard normal quantile function.
	
	Thus the two optimizers are characterized by the systems \eqref{eq:entropy_system} and \eqref{eq:bass_system}. We now illustrate the comparison on a concrete example, with parameters chosen so as to accentuate the discrepancy between the two optimizers. The outcome is that, even after such a tuning, the two optimizers remain remarkably close. 	For
	\[
	\mu=0.40\,\delta_{-1}+0.46\,\delta_0+0.14\,\delta_1,
	\qquad
	\nu=0.43\,\delta_{-2}+0.27\,\delta_0+0.30\,\delta_2,
	\]
	the corresponding entropy and Bass optimizers are
	\[
	m^E=
	\begin{pmatrix}
		0.25123 & 0.09755 & 0.05123\\
		0.16085 & 0.13831 & 0.16085\\
		0.01793 & 0.03414 & 0.08793
	\end{pmatrix},
	\qquad
	m^B=
	\begin{pmatrix}
		0.25229 & 0.09543 & 0.05229\\
		0.15941 & 0.14117 & 0.15941\\
		0.01830 & 0.03340 & 0.08830
	\end{pmatrix},
	\]
	and
	$
	(u^E-u^B,\;v^E-v^B)
	=
	(-1.06,\;1.43)\times 10^{-3}.
	$
	Hence $m^E\neq m^B$, although the discrepancy between the two optimizers is very small in this example.

	\bibliographystyle{abbrv}
	\bibliography{joint_biblio}

\end{document}